\documentclass{article}
\usepackage{amsmath}
\usepackage[T1]{fontenc}
\usepackage[latin9]{luainputenc}
\usepackage{geometry}
\setcounter{tocdepth}{2}
\usepackage{color}
\usepackage{amsmath}
\usepackage{amsthm}
\usepackage{amssymb}
\usepackage{graphicx}
\usepackage[unicode=true,
 bookmarks=true,bookmarksnumbered=true,bookmarksopen=false,
 breaklinks=false,pdfborder={0 0 1},backref=false,colorlinks=true]
 {hyperref}
\usepackage{subfig}
\usepackage{authblk}
\hypersetup{
 linkcolor=blue, citecolor=black, urlcolor=blue, filecolor=blue, pdfpagelayout=OneColumn, pdfnewwindow=true, pdfstartview=XYZ, plainpages=false}
\makeatletter
\numberwithin{equation}{section}
  \theoremstyle{remark}
  \newtheorem*{rem*}{\protect\remarkname}
\theoremstyle{plain}
\newtheorem{thm}{\protect\theoremname}[section]
  \theoremstyle{definition}
  \newtheorem{defn}[thm]{\protect\definitionname}
  \theoremstyle{definition}
  \newtheorem{example}[thm]{\protect\examplename}
  \theoremstyle{plain}
  \newtheorem{lem}[thm]{\protect\lemmaname}
  \theoremstyle{plain}
  
  \theoremstyle{remark}
  \newtheorem{rem}[thm]{\protect\remarkname}
  \theoremstyle{plain}
  \newtheorem{cor}[thm]{\protect\corollaryname}

\usepackage{ifpdf} 
\ifpdf 

 \IfFileExists{lmodern.sty}{\usepackage{lmodern}}{}

\fi 

\usepackage[figure]{hypcap}


\newcommand{\R}{{\mathbb R}}

\usepackage{fancyhdr}

\@ifundefined{extrarowheight}
 {\usepackage{array}}{}
\setlength{\extrarowheight}{2pt}


\date{ }

\def\definitionname{Definition}
\def\theoremname{Theorem}
\def\propositionname{Proposition}
\def\lemmaname{Lemma}
\def\corollaryname{Corollary}
\def\remarkname{Remark}
\def\examplename{Example}

\makeatother

  \providecommand{\corollaryname}{Corollary}
  \providecommand{\definitionname}{Definition}
  \providecommand{\examplename}{Example}
  \providecommand{\lemmaname}{Lemma}
  \providecommand{\propositionname}{Proposition}
  \providecommand{\remarkname}{Remark}
\providecommand{\theoremname}{Theorem}
\title{Hamilton-Jacobi equations for optimal control on networks with entry or exit costs}
\author{Manh Khang DAO\textsuperscript{}\thanks{\noindent\textsuperscript{}IRMAR, Universit\'e de Rennes 1, 35000 Rennes, France. manh-khang.dao@univ-rennes1.fr}}
\providecommand{\keywords}[1]{\textbf{Key words:} #1}
\providecommand{\AMSclass}[1]{\textbf{AMS subject classification:} #1}
\begin{document}

\date{\today}




\maketitle
\begin{abstract}
We consider an optimal control on networks in the spirit of the
works of Achdou et al. (2013) and Imbert et al. (2013). The main
new feature is that there are entry (or exit) costs at the edges
of the network leading to a possible discontinuous value function.
We characterize the value function as the unique viscosity solution
of a new Hamilton-Jacobi system. The uniqueness is a consequence
of a comparison principle for which we give two different proofs,
one with arguments from the theory of optimal control inspired by Achdou et al. (2014) and one based
on partial differential equations techniques inspired by a recent
work of Lions and Souganidis (2016). 
\end{abstract}
\keywords{Optimal control, networks, Hamilton-Jacobi equation, viscosity solutions, uniqueness, switching cost}\\
\AMSclass{34H05, 35F21, 49L25, 49J15, 49L20, 93C30}
\section{Introduction}

A network (or a graph) is a set of items, referred to as vertices
or nodes, which are connected by edges (see Figure~\ref{fig: Network intro}
for example). Recently, several research projects have been devoted to
dynamical systems and differential equations on networks,  in general or more particularly 
in connection with problems of data transmission or traffic management (see for example Garavello and Piccoli~\cite{GP2006}
and Engel et al~\cite{EKNS2008}). 

An optimal control problem is an optimization problem where an agent tries
to minimize a cost which depends on the solution of a controlled ordinary
differential equation (ODE). The ODE is controlled in the sense that
it depends on a function called the control. The goal is to find the
best control in order to minimize the given cost. In many situations, the optimal value 
of the problem as a function of the initial state (and possibly of the initial time when the horizon of the problem is finite)
is a viscosity solution of a Hamilton-Jacobi-Bellman partial differential equation 
(HJB equation). Under appropriate conditions, the HJB equation has a unique viscosity
solution characterizing by this way the value function. Moreover, the optimal control may be recovered from the solution of the HJB equation, at least if the latter is smooth enough.

The first articles  about optimal control problems in which the set of admissible states 
is a network (therefore the state variable is a continuous one) appeared in 2012: 
in~\cite{ACCT2013}, Achdou et al. derived the  HJB equation associated to an infinite horizon
 optimal control on a network and proposed a suitable notion of viscosity solution. Obviously, 
 the main difficulties arise at the vertices where the network does not have a regular differential structure.
  As a result, the new admissible test-functions whose restriction to each edge is
$C^{1}$ are applied.  Independently and at the same time, Imbert et al.~\cite{IMZ2013}
proposed an equivalent notion of viscosity solution for studying a
Hamilton-Jacobi approach to junction problems and traffic flows. Both~\cite{ACCT2013} and~\cite{IMZ2013} contain first results
 on comparison principles which were improved later. It is also worth mentioning 
 the work by Schieborn and Camilli~\cite{SC2013}, in which
the authors focus on eikonal equations on networks and on a less general
notion of viscosity solution. In the particular case of eikonal equations, 
Camilli and Marchi established in~\cite{CM2013} the equivalence between 
the definitions given in~\cite{ACCT2013,IMZ2013,SC2013}.

Since 2012, several proofs of comparison principles for HJB equations on networks, giving uniqueness of the solution, have been 
proposed.
\begin{enumerate}
\item  In~\cite{AOT2015}, Achdou et al. give a proof of a comparison principle for 
a stationary HJB equation arising from an optimal control with infinite horizon, (therefore the Hamiltonian is convex)
by mixing arguments from the theory of optimal control and PDE techniques.
Such a proof was much inspired by works of Barles et al.~\cite{BBC2014,BBC2013}, on
 regional  optimal control problems in $\R^d$, (with discontinuous dynamics and costs).
\item A different and more general proof, using only arguments from the theory of PDEs was obtained by Imbert and Monneau
in~\cite{IM2017}. The proof works for quasi-convex Hamiltonians, and for stationary and time-dependent 
HJB equations. It relies on the construction of suitable~\emph{vertex test functions}. 
\item A very simple and elegant proof, working for non convex Hamiltonians, has been very recently given 
by  Lions and Souganidis~\cite{LS2016,LS2017}.
\end{enumerate}
The goal of this  paper is to consider an optimal control problem
on a network in which there are entry (or exit)
 costs at each edge of the network and to study the related HJB equations.
The effect of the entry/exit costs is to make the value function of the problem 
discontinuous.  Discontinuous solutions of Hamilton-Jacobi
equation have been studied by various authors, see for example Barles~\cite{Barles1993}, Frankowska and Mazzola~\cite{FM2013},
and in particular Graber et al.~\cite{GHZ2017} for different HJB equations on networks with discontinuous solutions. 

 To simplify the problem, we will first study
the case of junction, i.e., a network  of the form $\mathcal{G}= \cup_{i=1}^N \Gamma_{i}$
with  $N$ edges $\Gamma_{i}$ ($\Gamma_{i}$ is the  closed half line $\R^+ e_i$) and 
only one vertex $O$, where $\{O\}= \cap_{i=1}^N \Gamma_{i}$.
 Later, we will  generalize our analysis to  networks with an arbitrary number of vertices.
 In the case of the  junction described above, our assumptions about the dynamics and the running costs
 are similar to those made in~\cite{AOT2015}, except that additional 
 costs $c_{i}$ for entering  the edge $\Gamma_{i}$ at $O$
or  $d_{i}$ for exiting $\Gamma_{i}$ at $O$ are added  in the cost
functional. Accordingly, the value function is continuous on $\mathcal{G}$,
but is in general discontinuous at the vertex $O$. Hence, instead
of considering the value function $\mathsf{v}$, we split it into the  
 collection $(v_i)_{1\le i\le N}$, where $v_i$ is  continuous function defined on the edge
 $\Gamma_{i}$.
More precisely,
\[
v_{i}\left(x\right)=\begin{cases}
\mathsf{v}\left(x\right) & \text{if }x\in \Gamma_{i}\backslash\left\{ O\right\} ,\\
\lim_{\delta \rightarrow 0 ^+}\mathsf{v}\left(\delta e_{i}\right) & \text{if }x=O.
\end{cases}
\]
Our approach is therefore reminiscent of optimal switching problems (impulsional control):
 in the present case the switches can only occur at the vertex $O$.
Note that our assumptions will ensure that 
$\mathsf{v}|_{\Gamma_i\setminus \{O\}}$ is Lipschitz continuous near $O$ and that
  $\lim_{\delta \rightarrow 0 ^+}\mathsf{v}\left(\delta e_{i}\right)$ does exist. 
In the case of entry costs for example, our first main result will be to find 
the relation between
 $\mathsf{v}\left(O\right)$, $v_{i}\left(O\right)$ and
$v_{j}\left(O\right)+c_{j}$ for $i,j=\overline{1,N} $. 

 This will show that the functions $(v_i)_{1\le i\le N}$ are (suitably defined) viscosity solutions 
of the following system
\begin{equation}
\begin{array}{cc}
\lambda u_{i}\left(x\right)+H_{i}\left(x,\dfrac{d u_{i}}{d x_{i}}\left(x\right)\right)=0 & \mbox{if \ensuremath{x\in \Gamma_{i}\backslash \left\{O\right\}},}\\
{\displaystyle \lambda u_{i}\left(O\right)+\max\left\{ -\lambda\min_{j\ne i}\left\{ u_{j}\left(O\right)+c_{j}\right\} ,H_{i}^{+}\left(O,\dfrac{d u_{i}}{dx_{i}}\left(O\right)\right),H_{O}^{T}\right\} =0} & \mbox{if \ensuremath{x=O}}.
\end{array}\label{eq: Hamilton-Jacobi equation}
\end{equation}
Here $H_i$ is the Hamiltonian corresponding to edge $\Gamma_{i}$. At vertex $O$, 
the definition of the Hamiltonian has to be
particular, in order to consider all the possibilities when $x$ is close
to $O$. More specifically, if $x$ is close to $O$ and belongs to
$\Gamma_{i}$ then: 
\begin{itemize}
\item  The term $\min_{j\ne i}\left\{ u_{j}\left(O\right)+c_{j}\right\} $
accounts for situations in which the trajectory enters $\Gamma_{i_{0}}$ where
$u_{i_{0}}\left(O\right)+c_{i_{0}}=\min_{j\ne i}\left\{ u_{j}\left(O\right)+c_{j}\right\} $.
\item  The term $H_{i}^{+}\left(O,\dfrac{d u_{i}}{dx_{i}}\left(O\right)\right)$
accounts for situations in which the trajectory does not leave $\Gamma_{i}$.
\item  The term $H_{O}^{T}$ accounts for  situations in which  the
trajectory stays at $O$. 
\end{itemize}

The most important part of the paper will be devoted
to two different proofs of a comparison principle leading to the well-poseness of~\eqref{eq: Hamilton-Jacobi equation}: the first one uses arguments from optimal
control theory coming from Barles et al.~\cite{BBC2013,BBC2014}
and  Achdou et al.~\cite{AOT2015}; the second one is inspired by Lions and Souganidis~\cite{LS2016} and uses arguments from the 
theory of PDEs.

The paper is organized as follows: Section~\ref{sec:optim-contr-probl} deals with the optimal
control problems with entry and exit costs: we give a simple example in which the value function is discontinuous at the vertex $O$,
and also prove results on the structure of the value function near $O$. 
In Section~\ref{sec:hamilt-jacobi-syst}, the new system of~\eqref{eq: Hamilton-Jacobi equation} is defined 
and a suitable notion of  viscosity solutions is proposed. In Section~\ref{sec:relation-between-two}, 
we prove our value functions are viscosity solutions of the above mentioned system.
 In Section~\ref{sec:comp-princ-uniq}, some properties of viscosity sub and super-solution
are given and  used to obtain the comparison principle. Finally, optimal control problems
with  entry costs which may be zero and related HJB
equations  are considered  in Section~\ref{sec:gener-case-switch}.

\begin{figure}
\includegraphics[scale=0.5]{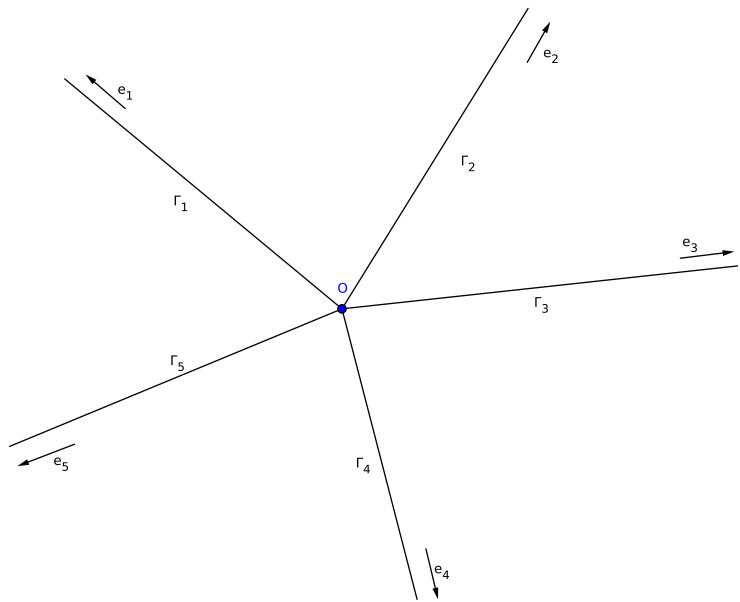}\caption{\label{fig: Network intro}The network $\mathcal{G}$
($N=5$)}

\end{figure}

\section{Optimal control problem on junction with entry/exit costs}\label{sec:optim-contr-probl}

\subsection{The geometry}

We consider the model case of the junction in $\mathbb{R}^{d}$
with $N$ semi-infinite straight edges, $N>1$. The edges are denoted
by $\left(\Gamma_{i}\right)_{i=\overline{1,N}}$ where $\Gamma_{i}$ is the closed
half-line $\mathbb{R}^{+}e_{i}$. The vectors $e_{i}$ are two by
two distinct unit vectors in $\mathbb{R}^{d}$.  The half-lines $\Gamma_{i}$
are glued at the vertex $O$ to form the junction $\mathcal{G}$

\[
\mathcal{G}=\bigcup_{i=1}^{N}\Gamma_{i}.
\]
The geodetic distance $d\left(x,y\right)$ between two points $x,y$
of $\mathcal{G}$ is
\[
d\left(x,y\right)=\begin{cases}
\left|x-y\right| & \mbox{if }x,y\mbox{ belong to the same egde }\Gamma_{i},\\
\left|x\right|+\left|y\right| & \mbox{if }x,y\mbox{ belong to different edges }\Gamma_{i}\mbox{ and }\Gamma_{j}.
\end{cases}
\]

\subsection{The optimal control problem}

We consider infinite horizon optimal control problems which have different
dynamic and running costs for each and every edge. For $i=\overline{1,N}$,
\begin{itemize}
\item the set of control on $\Gamma_{i}$ is denoted by $A_{i}$
\item  the system
is driven by a dynamics $f_{i}$
\item there is a running cost $\ell_{i}$.
\end{itemize}
Our main assumptions, referred to as $\left[H\right]$ hereafter,  are as follows:
\begin{description}
\item{$\left[H0\right]$ (\textbf{Control sets})} 
Let $A$ be a metric space (one can take $A=\mathbb{R}^d$. For $i=\overline{1,N}$, $A_{i}$ is  a nonempty compact subset of $A$ and the sets $A_{i}$ are disjoint.
\item{$\left[H1\right]$ (\textbf{Dynamics})}
For $i=\overline{1,N}$, the function $f_{i}:\Gamma_{i}\times A_{i}\rightarrow\mathbb{R}$
is continuous and bounded by $M$. Moreover, there exists $L>0$ such that 
\[
\left|f_{i}\left(x,a\right)-f_{i}\left(y,a\right)\right|\le L\left|x-y\right|\quad\mbox{ for all } x,y\in \Gamma_{i}, a\in A_{i}.
\]
Hereafter, we will use the notation $F_{i}\left(x\right)$ for the
set $\left\{ f_{i}\left(x,a\right)e_{i}:a\in A_{i}\right\} $.
\item{$\left[H2\right]$ (\textbf{Running costs})} 
For $i=\overline{1,N}$, the function $\ell_{i}:\Gamma_{i}\times A_{i}\rightarrow\mathbb{R}$ is
a continuous function bounded by $M>0$. There exists a modulus of
continuity $\omega$ such that
\[
\left|\ell_{i}\left(x,a\right)-\ell_{i}\left(y,a\right)\right|\le\omega\left(\left|x-y\right|\right)\quad\mbox{ for all } x,y\in \Gamma_{i},a\in A_{i}.
\]
\item{$\left[H3\right]$ (\textbf{Convexity of dynamic and costs})} For $x\in \Gamma_{i}$, the following
set
\[
\mbox{FL}_{i}\left(x\right)=\left\{ \left(f_{i}\left(x,a\right)e_{i},\ell_{i}\left(x,a\right)\right):a\in A_{i}\right\} 
\]
 is non-empty, closed and convex.
\item{$\left[H4\right]$ (\textbf{Strong controllability})} There exists a real
number $\delta>0$ such that
\[
\left[-\delta e_{i},\delta e_{i}\right]\subset F_{i}\left(O\right)=\left\{ f_{i}\left(O,a\right)e_{i}:a\in A_{i}\right\} .
\]
\end{description}

\begin{rem}
The assumption that the  sets $A_{i}$ are disjoint
is not restrictive. Indeed, if $A_{i}$ are not disjoint, then we
define $\tilde{A}_{i}=A_{i}\times\left\{ i\right\} $ and $\tilde{f}_{i}\left(x,\tilde{a}\right)=f_{i}\left(x,a\right),\tilde{\ell}_{i}\left(x,\tilde{a}\right)=\ell_{i}\left(x,a\right)$
 with $\tilde{a}=\left(a,i\right)$ with $a\in A_{i}$. The assumption
$\left[H3\right]$ is made to avoid the use of relaxed control. With
assumption $\left[H4\right]$, one gets that the Hamiltonian which
will appear later is coercive for $x$ close to the $O$. Moreover, $\left[H4\right]$ is an important assumption to prove Lemma~\ref{Control t at x near O} and Lemma~\ref{lem:test function at O}.

\end{rem}
Let
\[
\mathcal{M}=\left\{ \left(x,a\right):x\in\mathcal{G}, a\in A_{i}\mbox{ if }x\in \Gamma_{i}\backslash\left\{ O\right\} ,\mbox{ and }a\in\cup_{i=1}^{N}A_{i}\mbox{ if }x=O\right\} .
\]
Then $\mathcal{M}$ is closed. We also define the function on $\mathcal{M}$
by
\[
\mbox{for all}\left(x,a\right)\in\mathcal{M},\quad f\left(x,a\right)=\begin{cases}
f_{i}\left(x,a\right)e_{i} & \quad\mbox{if }x\in \Gamma_{i}\backslash\left\{ O\right\} \mbox{ and }a\in A_{i},\\
f_{i}\left(O,a\right)e_{i} & \quad\mbox{if }x=O\mbox{ and }a\in A_{i}.
\end{cases}
\]
The function $f$ is continuous on $\mathcal{M}$ since the sets $A_{i}$
are disjoint.
\begin{defn}[\emph{The speed set and the admissible control set}]
 The set $\tilde{F}\left(x\right)$ which contains all the ``possible
speeds'' at $x$ is defined by
\[
\tilde{F}\left(x\right)=\begin{cases}
F_{i}\left(x\right) & \quad\mbox{if }x\in \Gamma_{i}\backslash\left(O\right),\\
\bigcup_{i=1}^{N}F_{i}\left(O\right) & \quad\mbox{if \ensuremath{x=O}}.
\end{cases}
\]
For $x\in\mathcal{G}$, the set of admissible trajectories starting
from $x$ is
\[
Y_{x}=\left\{ y_{x}\in Lip\left(\mathbb{R}^{+};\mathcal{G}\right):\begin{cases}
\dot{y}_{x}\left(t\right) & \in\tilde{F}\left(y_{x}\left(t\right)\right) \quad\mbox{for a.e. }t>0 \\
y_{x}\left(0\right) & =x
\end{cases}\right\} .
\]
\end{defn}
According to \cite[Theorem 1.2]{AOT2015}, a solution $y_{x}$
can be associated with several control laws. We introduce the set
of admissible controlled trajectories starting from $x$
\[
\mathcal{T}_{x}=\left\{ \left(y_{x},\alpha\right)\in L_{loc}^{\infty}\left(\mathbb{R}^{+};\mathcal{M}\right):y_{x}\in Lip\left(\mathbb{R}^{+};\mathcal{G}\right)\mbox{ and }y_{x}\left(t\right)=
x+\int_{0}^{t}f\left(y_{x}\left(s\right),\alpha\left(s\right)\right)ds\right\} .
\]
Notice that, if $\left(y_{x},\alpha\right)\in\mathcal{T}_{x}$ then $y_{x}\in Y_{x}$. Hereafter, we will denote $y_{x}$ by $y_{x,\alpha}$ if $\left(y_{x},\alpha\right)\in\mathcal{T}_{x}$. For any $y_{x,\alpha}$, we can define the closed set $T_O=\left\{ t\in\mathbb{R}^{+}:y_{x,\alpha}\left(t\right)=O\right\} $
and the open set $T_{i}$  in $\mathbb{R}^{+}=\left[0,+\infty\right)$
 by $T_i=\left\{ t\in\mathbb{R}^{+}:y_{x,\alpha}\left(t\right)\in \Gamma_{i}\backslash\left\{ O\right\} \right\} $.
The set $T_{i}$ is a countable union of disjoint open intervals

\[
T_{i}=\bigcup_{k\in K_{i}\subset\mathbb{N}}T_{ik}=\begin{cases}
\left[0,\eta_{i0}\right)\cup\bigcup_{k\in K_{i}\subset\mathbb{N^{\star}}}\left(t_{ik},\eta_{ik}\right) & \quad\mbox{if }x\in \Gamma_{i}\backslash\left\{ O\right\} ,\\
\bigcup_{k\in K_{i}\subset\mathbb{N^{\star}}}\left(t_{ik},\eta_{ik}\right) & \quad\mbox{if }x\notin \Gamma_{i}\backslash\left\{ O\right\} ,
\end{cases}
\]
where 
$K_{i}=\overline{1,n}$ if the trajectory $y_{x,\alpha}$ enters $\Gamma_{i}$ $n$ times 
and $K_{i}=\mathbb{N}$ if the trajectory $y_{x,\alpha}$ enters $\Gamma_{i}$ infinite times.
\begin{rem}
From the above definition, one can see that $t_{ik}$ is an entry time
in $\Gamma_{i}\backslash\left\{ O\right\}$ and $\eta_{ik}$
is an exit time from $\Gamma_{i}\backslash\left\{ O\right\} $
. Hence
\[
y_{x,\alpha}\left(t_{ik}\right)=y_{x,\alpha}\left(\eta_{ik}\right)=O.
\]
\end{rem}
Let $C=\left\{ c_{1},c_{2},\ldots,c_{N}\right\} $ be a set of \textbf{\emph{entry costs}} 
and $D=\left\{ d_{1},d_{2},\ldots,d{}_{N}\right\} $ be a set of \textbf{\emph{exit costs}}.  We underline that, except in  Section~\ref{sec:gener-case-switch}, entry and exist costs are positive.

In the sequel, we define two different \emph{cost functionals} (the first one corresponds to the case when there is a cost for entering the edges
and the second one  corresponds to the case when there is a cost for exiting the edges):
\begin{defn}[\textbf{The cost functionals and value functions with entry/exit costs}]
The costs associated to trajectory $\left(y_{x,\alpha},\alpha\right)\in\mathcal{T}_{x}$ are
defined by
\[
J\left(x;\left(y_{x,\alpha},\alpha\right)\right)=\int_{0}^{+\infty}\ell\left(y_{x,\alpha}
\left(t\right),\alpha\left(t\right)\right)e^{-\lambda t}dt+\sum_{i=1}^{N}
\sum_{k\in K_{i}}c_{i}e^{-\lambda t_{ik}}\quad\mbox{(cost functional with entry cost)},
\]
and
\[
\widehat{J}\left(x;\left(y_{x,\alpha},\alpha\right)\right)=\int_{0}^{+\infty}\ell\left(y_{x,\alpha}\left(t\right),\alpha\left(t\right)\right)e^{-\lambda t}dt+\sum_{i=1}^{N}\sum_{k\in K_{i}}d_{i}e^{-\lambda\eta_{ik}}\quad\mbox{(cost functional with exit cost)},
\]
where the running cost $\ell:\mathcal{M}\rightarrow\mathbb{R}$ is
\[
\ell\left(x,a\right)=\begin{cases}
\ell_{i}\left(x,a\right) & \quad\mbox{if \ensuremath{x\in \Gamma_{i}\backslash\left\{ O\right\} }}\mbox{ and }a\in A_{i},\\
\ell_{i}\left(O,a\right) & \quad\mbox{if }x=0\mbox{ and }a\in A_{i}.
\end{cases}
\]
Hereafter, to simplify the notation, we will use  $J\left(x,\alpha \right)$ and $\widehat{J}\left(x,\alpha \right)$ instead of $J\left(x;\left(y_{x,\alpha},\alpha\right)\right)$ and $\widehat{J}\left(x;\left(y_{x,\alpha},\alpha\right)\right)$, respectively.

The value functions of the infinite
horizon optimal control problem  are  defined by:
\[
\mathsf{v}\left(x\right)=\inf_{\left(y_{x,\alpha},\alpha\right)\in\mathcal{T}_{x}}J\left(x;\left(y_{x,\alpha},\alpha\right)\right)\quad\mbox{(value function with entry cost)},
\]
and
\[
\widehat{\mathsf{v}}\left(x\right)=\inf_{\left(y_{x,\alpha},\alpha\right)\in\mathcal{T}_{x}}\widehat{J}\left(x;\left(y_{x,\alpha},\alpha\right)\right)\quad\mbox{(value function with exit cost)}.
\]
\end{defn}
\begin{rem}
By the definition of the value function, we are mainly interested in a
control law $\alpha$ such that $J\left(x,\alpha\right)<+\infty$.
In such a  case, if $\left|K_{i}\right|=+\infty$, then we can order
$\left\{ t_{ik},\eta_{ik}: \;k\in\mathbb{N} \right\}$
such that
\[
t_{i1}<\eta_{i1}<t_{i2}<\eta_{i2}<\ldots<t_{ik}<\eta_{ik}<\ldots,
\]
and
\[
\lim_{k\rightarrow\infty}t_{ik}=\lim_{k\rightarrow\infty}\eta_{ik}=+\infty.
\]
Indeed, assuming if  $\lim_{k\rightarrow\infty}t_{ik}=\overline{t}<+\infty$, then
\begin{eqnarray*}
J\left(x,\alpha\right) & \ge & -\dfrac{M}{\lambda}+\sum_{k=1}^{+\infty}e^{-\lambda t_{ik}}c_{i}=-\dfrac{M}{\lambda}+c_{i}\sum_{k=1}^{+\infty}e^{-\lambda t_{ik}}=+\infty,
\end{eqnarray*}
in contradiction with $J\left(x,\alpha\right)<+\infty$. This means that the state cannot switch edges infinitely many times in finite time, otherwise the cost functional is obviously infinite.
\end{rem}
The following example shows that the value function with entry
costs is possibly discontinuous (The same holds for the value function with exit costs).
\begin{example}
Consider the network $\mathcal{G}=\Gamma_{1}\cup \Gamma_{2}$ where $\Gamma_{1}=\mathbb{R}^{+} e_{1}=\left(-\infty,0\right]$
and $\Gamma_{2}=\mathbb{R}^{+}e_{2}=\left[0,+\infty\right)$. The control
sets are $A_{i}=\left[-1,1\right]\times\left\{ i\right\} $ with $i\in\left\{1,2\right\}$. Set
\[
\left(f\left(x,a\right),\ell\left(x,a\right)\right)=\begin{cases}
\left(f_{i}\left(x,\left(a_{i},i\right)\right)e_{i},\ell_{i}\left(x,\left(a_{i},i\right)\right)\right) & \text{if }x\in\Gamma_{i}\backslash\left\{ O\right\} \text{ and }a=\left(a_{i},i\right)\in A_{i},\\
\left(f_{i}\left(O,\left(a_{i},i\right)\right)e_{i},\ell_{i}\left(O,\left(a_{i},i\right)\right)\right) & \text{if }x=O\text{ and }a=\left(a_{i},i\right)\in A_{i},
\end{cases}
\]
where $f_{i}\left(x,\left(a_{i},i\right)\right)=a_{i}$ and $\ell_{1}\equiv1,\ell_{2}\left(x,\left(a_{2},2\right)\right)=1-a_{2}$.
For $x\in \Gamma_{2}\backslash\left\{ O\right\} $, then 
$\mathsf{v}\left(x\right)=v_{2}\left(x\right)=0$ with optimal strategy consists in choosing $\alpha\left(t\right)\equiv\left(1,2\right)$. For $x\in \Gamma_{1}$, we can check that
 $\mathsf{v}\left(x\right)=\min\left\{ \dfrac{1}{\lambda},\dfrac{1-e^{-\lambda\left|x\right|}}{\lambda}+c_{2}e^{-\lambda\left|x\right|}\right\} $. More precisely, for all $x\in \Gamma_{1}$, we have
\[
\mathsf{v}\left(x\right)=\begin{cases}
\dfrac{1}{\lambda} & \text{if }c_{2}\ge\dfrac{1}{\lambda},\text{ with the optimal control  \ensuremath{\alpha\left(t\right)\equiv\left(-1,1\right)}},\\
\dfrac{1-e^{-\lambda\left|x\right|}}{\lambda}+c_{2}e^{-\lambda\left|x\right|} & \text{if }c_{2}<\dfrac{1}{\lambda},\text{ with the optimal control  \ensuremath{\alpha\left(t\right)=\begin{cases}
\left(1,1\right) & \text{if }t\le\left|x\right|,\\
\left(1,2\right) & \text{if }t\ge\left|x\right|.
\end{cases}}}
\end{cases}
\]
Summarizing, we have the two following cases
\begin{enumerate}
  \item If $c_{2}\ge\dfrac{1}{\lambda}$, then 
\[
\mathsf{v}\left(x\right)=\begin{cases}
0 & \quad\mbox{if }x\in \Gamma_{2}\backslash\left\{ O\right\} ,\\
\dfrac{1}{\lambda} & \quad\mbox{if }x\in \Gamma_{1}.
\end{cases}
\]
The graph of the value function with entry costs $c_{2}\ge\dfrac{1}{\lambda}=1$
is plotted in Figure~\ref{fig:Discontinuous function 1}.
\item  If $c_{2}<\dfrac{1}{\lambda}$, then 
\[
\mathsf{v}\left(x\right)=\begin{cases}
0 & \quad\mbox{if }x\in \Gamma_{2}\backslash\left\{ O\right\} ,\\
\dfrac{1-e^{-\lambda\left|x\right|}}{\lambda}+c_{2}e^{-\lambda\left|x\right|} & \quad\mbox{if }x\in \Gamma_{1}.
\end{cases}
\]
The graph of the value function with entry costs $c_{2}=\dfrac{1}{2}<1=\dfrac{1}{\lambda}$
is plotted in Figure~\ref{fig:Discontinuous function 2}.
\end{enumerate}
\begin{figure}
\subfloat[The value function with entry
cost $c_{2}\ge\dfrac{1}{\lambda}=1$.\label{fig:Discontinuous function 1}]
  {\includegraphics[width=.5\linewidth]{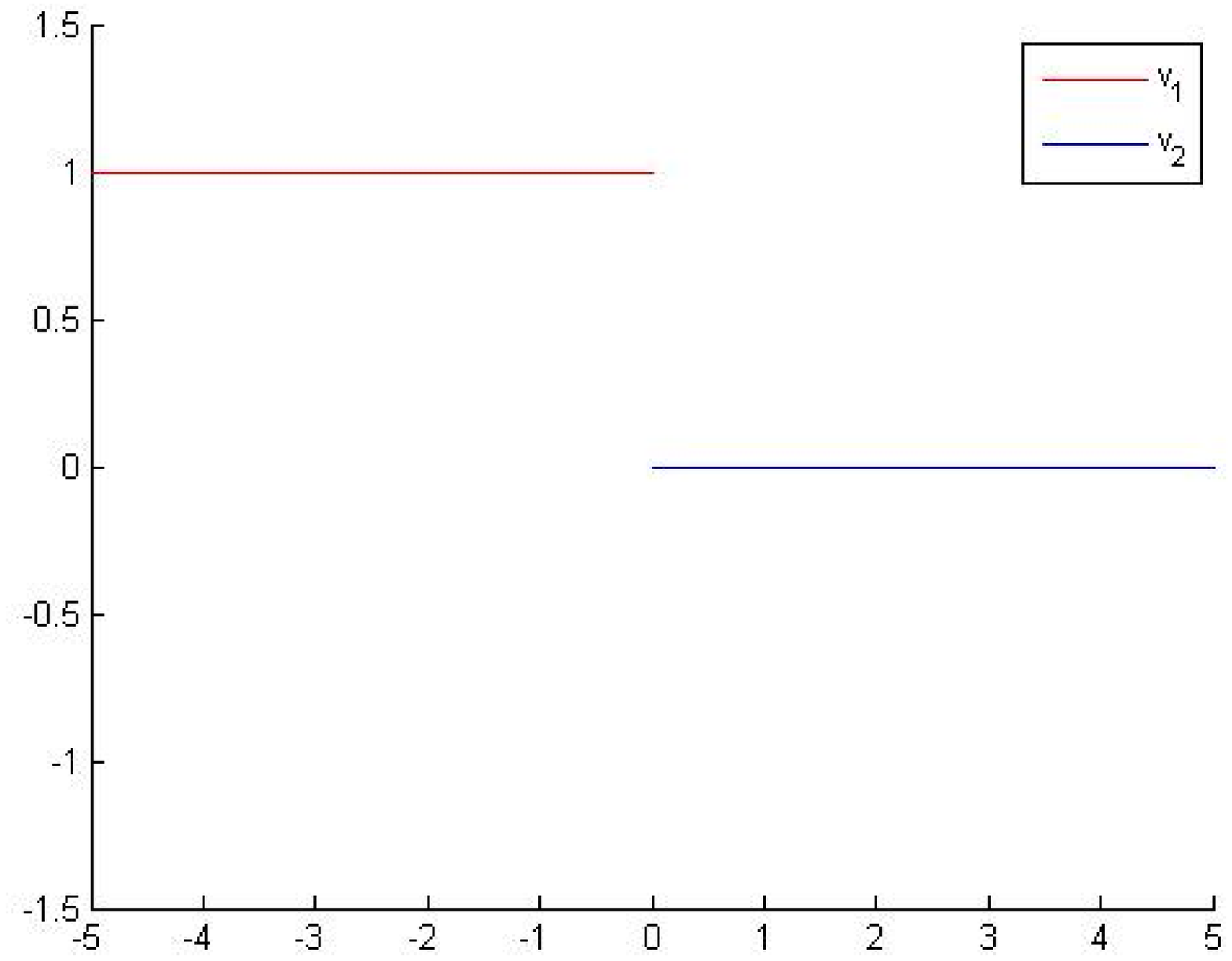}}\hfill
\subfloat[The value function with entry
cost $c_{2}=\dfrac{1}{2}<1=\dfrac{1}{\lambda}$.  \label{fig:Discontinuous function 2}]
  {\includegraphics[width=.5\linewidth]{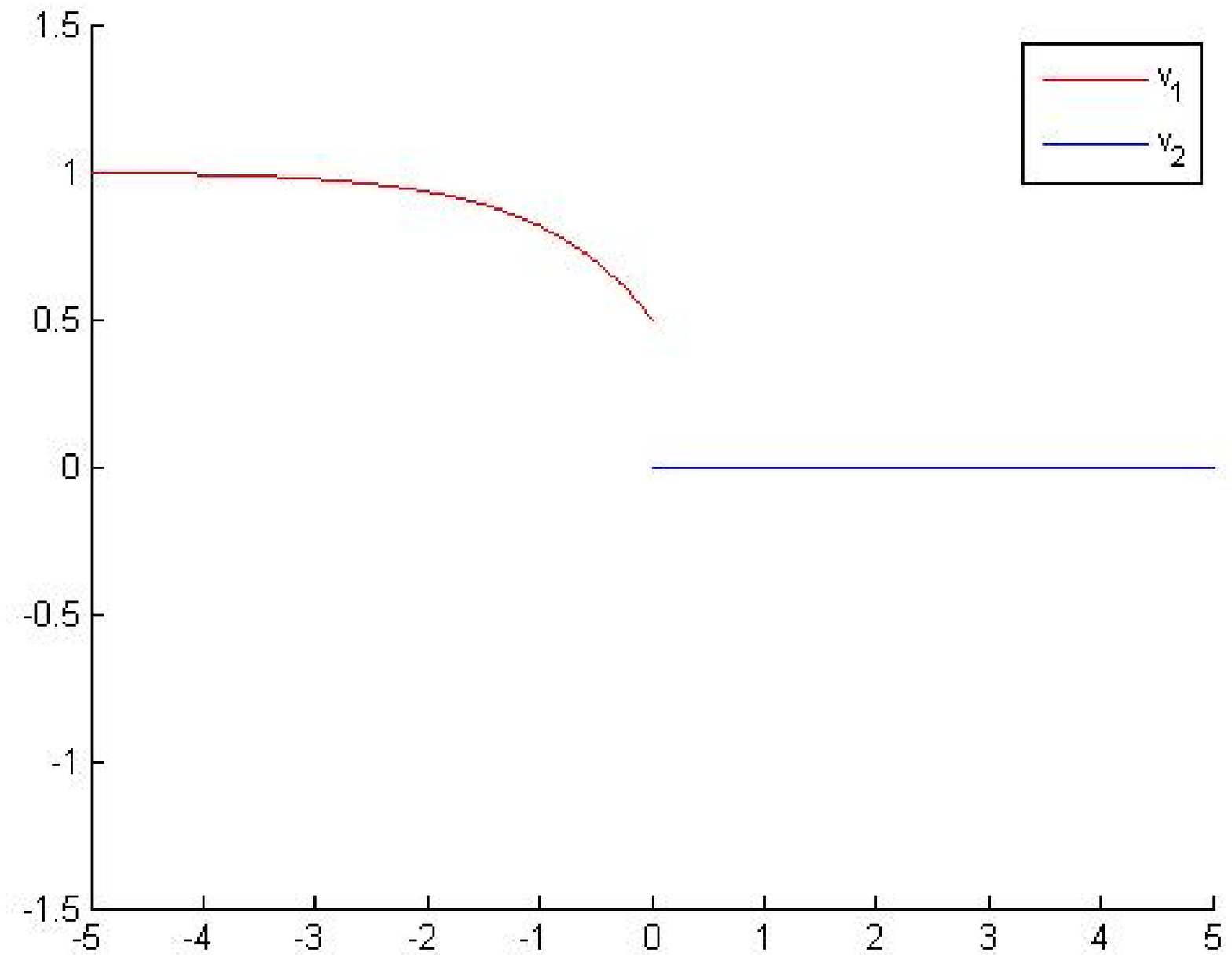}}\hfill

\caption{An example of value function with entry cost}
\end{figure}

\end{example}
\begin{lem} \label{Control t at x near O}Under assumptions $\left[H1\right]$ and $\left[H4\right]$,
there exist two  positive numbers $r_{0}$ and  $C$ such that for
all $x_{1},x_{2}\in B\left(O,r_{0}\right)\cap\mathcal{G}$, there
exists $\left(y_{x_{1},\alpha_{x_{1},x_{2}}},\alpha_{x_{1},x_{2}}\right)\in\mathcal{T}_{x_{1}}$
and $\tau_{x_{1},x_{2}}\le Cd\left(x_{1},x_{2}\right)$
such that $y_{x_{1}}\left(\tau_{x_{1},x_{2}}\right)=x_{2}$.
\end{lem}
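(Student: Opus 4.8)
The plan is to exploit the strong controllability $\left[H4\right]$ together with the Lipschitz bound on the dynamics $\left[H1\right]$ to show that, near $O$, one can travel along any edge toward or away from the vertex at a speed of fixed sign bounded away from zero, and then to concatenate such motions. First I would fix, for each $i$, two controls $a_i^{\pm}\in A_i$ realizing the extreme admissible speeds at the vertex, i.e. $f_i(O,a_i^-)=-\delta$ and $f_i(O,a_i^+)=\delta$; these exist because $\left[H4\right]$ gives $[-\delta e_i,\delta e_i]\subset F_i(O)$. Using the estimate $|f_i(x,a)-f_i(O,a)|\le L|x|$ from $\left[H1\right]$, I would then choose $r_0$ so small that $Lr_0\le \delta/2$, which guarantees that for every $x\in\Gamma_i\cap B(O,r_0)$ one has $f_i(x,a_i^-)\in[-\tfrac{3\delta}{2},-\tfrac{\delta}{2}]$ and $f_i(x,a_i^+)\in[\tfrac{\delta}{2},\tfrac{3\delta}{2}]$. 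In particular the signed speed along the edge keeps a fixed sign and magnitude at least $\delta/2$ on the whole ball.

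The elementary building block is then the following: starting from a point $re_i$ with $r\le r_0$ and applying the constant control $a_i^-$, the (unique, by Lipschitz continuity) solution of $\dot y=f_i(y,a_i^-)e_i$ has $|y(t)|$ strictly decreasing with speed between $\delta/2$ and $3\delta/2$, so it stays in $\Gamma_i\cap B(O,r_0)$, reaches $O$ at some first time $\tau\le \tfrac{2}{\delta}r$, and by continuity passes through every intermediate point of the edge. Symmetrically, the control $a_i^+$ drives $O$ (or any $re_i$) outward along $\Gamma_i$, reaching any target $se_i$ with $s\le r_0$ at a time $\le \tfrac{2}{\delta}|s-r|$.

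To conclude I would distinguish two cases according to the geodesic distance $d$. If $x_1,x_2$ lie on the same edge $\Gamma_i$, I apply the single control $a_i^-$ or $a_i^+$ depending on whether $x_2$ is closer to or farther from $O$ than $x_1$, reaching $x_2$ in time $\le \tfrac{2}{\delta}|x_1-x_2|=\tfrac{2}{\delta}d(x_1,x_2)$, the trajectory remaining in $B(O,r_0)$ by monotonicity. If $x_1\in\Gamma_i$ and $x_2\in\Gamma_j$ with $i\ne j$, I concatenate: use $a_i^-$ to steer $x_1$ to $O$ in time $\le\tfrac{2}{\delta}|x_1|$, then switch at the vertex to $a_j^+$ to steer $O$ to $x_2$ in time $\le\tfrac{2}{\delta}|x_2|$, for a total time $\le\tfrac{2}{\delta}(|x_1|+|x_2|)=\tfrac{2}{\delta}d(x_1,x_2)$. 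The piecewise-constant control takes values in $A_i$ while $y\in\Gamma_i\setminus\{O\}$ and in $\cup_{k=1}^{N}A_k$ at $O$, and the trajectory is extended to all of $\mathbb{R}^+$ by any admissible continuation, so that $(y,\alpha)\in\mathcal{T}_{x_1}$. This yields the claim with $C=\tfrac{2}{\delta}$ and $r_0=\tfrac{\delta}{2L}$.

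The part requiring the most care is the verification that the switch of control at $O$ produces a genuine element of $\mathcal{T}_{x_1}$, i.e. that the concatenated pair $(y,\alpha)$ satisfies $y(t)=x_1+\int_0^t f(y,\alpha)\,ds$ across the vertex and that $\alpha(t)$ lies in the correct control set for $y(t)$, together with pinning down $r_0$ so that the one-sided speed bounds, and hence the linear time estimate in $d(x_1,x_2)$, hold uniformly on $B(O,r_0)$. The case analysis and the elementary time integrals are routine once these bounds are secured.
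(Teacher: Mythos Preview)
Your proposal is correct and follows essentially the same approach as the paper: the same choice $r_0=\delta/(2L)$, the same constant controls realizing $f_i(O,\cdot)=\pm\delta$, the same lower speed bound $\delta/2$ giving $C=2/\delta$, and the same reduction of the cross-edge case to two same-edge steps through $O$. If anything, you supply more detail than the paper does (the upper speed bound $3\delta/2$, the monotonicity argument keeping the trajectory in $B(O,r_0)$, and the admissibility check for the concatenation at the vertex).
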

\begin{proof}[Proof of Lemma~\ref{Control t at x near O}]
This proof is classical. It is sufficient to consider the case when
$x_{1}$ and $x_{2}$ belong to same edge $\Gamma_{i}$, since in the other
cases, we will use $O$ as a connecting point between $x_{1}$ and
$x_{2}$. According to Assumption $\left[H4\right]$, there exists
$a\in A_{i}$ such that $f_{i}\left(O,a\right)=\delta$. Additionally,
by the Lipschitz continuity of $f_{i}$,
\[
\left|f_{i}\left(O,a\right)-f_{i}\left(x,a\right)\right|\le L\left|x\right|,
\]
hence, if we choose $r_{0}:=\dfrac{\delta}{2L}>0$, then $f_{i}\left(x,a\right)\ge\dfrac{\delta}{2}$
for all $x\in B\left(O,r_{0}\right)\cap \Gamma_{i}$. Let $x_{1},x_{2}$
be in $B\left(O,r_{0}\right)\cap \Gamma_{i}$ with $\left|x_{1}\right|<\left|x_{2}\right|$:
there exist a control law $\alpha$ and $\tau_{x_{1},x_{2}}>0$ such
that $\alpha\left(t\right)=a$ if $0\le t\le\tau_{x_{1},x_{2}}$ and
$y_{x_{1},\alpha}\left(\tau_{x_{1},x_{2}}\right)=x_{2}$. Moreover,
since the velocity $f_{i}\left(y_{x_{1},\alpha}\left(t\right),\alpha\left(t\right)\right)$
is always greater than $\dfrac{\delta}{2}$ when $t\le\tau_{x_{1},x_{2}}$,
then $\tau_{x_{1},x_{2}}\le\dfrac{2}{\delta}d\left(x_{1},x_{2}\right).$
If $\left|x_{1}\right|>\left|x_{2}\right|$,  the proof is achieved by replacing $a\in A_{i}$
by $\overline{a}\in A_{i}$ such that $f_{i}\left(O,\overline{a}\right)=-\delta$
and applying the same argument as above.
\end{proof}

\subsection{\label{subsec: value function at O}Some properties of value function at the vertex}
\begin{lem}
\label{lem: extension value function}Under assumption $\left[H\right]$, 
 $\mathsf{v}|_{\Gamma_{i}\backslash\left\{ O\right\} }$
and $\widehat{\mathsf{v}}|_{\Gamma_{i}\backslash\left\{ O\right\} }$ are continuous
for any $i=\overline{1,N}$. Moreover, there exists $\varepsilon>0$
such that $\mathsf{v}|_{\Gamma_{i}\backslash\left\{ O\right\}}$ and $\widehat{\mathsf{v}}|_{\Gamma_{i}\backslash\left\{ O\right\}}$
are Lipschitz continuous in $\left(\Gamma_{i}\backslash\left\{ O\right\} \right)\cap B\left(O,\varepsilon\right)$.
Hence, it is possible to extend $\mathsf{v}|_{\Gamma_{i}\backslash\left\{ O\right\} }$
and $\widehat{\mathsf{v}}|_{\Gamma_{i}\backslash\left\{ O\right\} }$ at $O$ into Lipschitz continuous functions in $\Gamma_i \cap B\left(O,\varepsilon\right)$. Hereafter,  
$v_{i}$ and $\widehat{v}_{i}$ denote these extensions. 
\end{lem}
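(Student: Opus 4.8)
The plan is to prove the statement for the entry-cost value function $\mathsf{v}$; the argument for $\widehat{\mathsf{v}}$ is identical, the only change being that costs are attached to the exit times $\eta_{ik}$ rather than the entry times $t_{ik}$, which affects none of the estimates below. As a preliminary I would record the a priori bound $\left|\mathsf{v}(x)\right|\le M/\lambda$ on every edge: the lower bound follows from $\ell\ge -M$ and the positivity of the entry costs, while the upper bound follows by exhibiting one switch-free competitor (from any $x\in\Gamma_{i}$ either move along $\Gamma_{i}$ away from $O$, or reach $O$ and remain there with a control $a\in A_{i}$ satisfying $f_{i}(O,a)=0$, which is available since $0\in\left[-\delta e_{i},\delta e_{i}\right]\subset F_{i}(O)$ by $\left[H4\right]$; in either case the trajectory never leaves $\Gamma_{i}$ and pays no entry cost). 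The proof then splits into a quantitative Lipschitz estimate near $O$ and a softer continuity statement in the interior of each edge.

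For the Lipschitz bound near $O$, I would set $\varepsilon=r_{0}$ with $r_{0},C$ from Lemma~\ref{Control t at x near O} and take $x_{1},x_{2}\in\left(\Gamma_{i}\backslash\left\{O\right\}\right)\cap B(O,r_{0})$. Lemma~\ref{Control t at x near O} produces a controlled trajectory steering $x_{1}$ to $x_{2}$ in time $\tau\le Cd(x_{1},x_{2})$; since $x_{1},x_{2}$ lie on the same edge this connection moves monotonically along $\Gamma_{i}$ and never meets $O$, so it incurs no entry cost. Concatenating this control with an $\eta$-optimal control for $x_{2}$ gives a competitor for $x_{1}$ whose cost is $\int_{0}^{\tau}\ell\,e^{-\lambda t}\,dt+e^{-\lambda\tau}J(x_{2},\alpha)\le M\tau+e^{-\lambda\tau}\left(\mathsf{v}(x_{2})+\eta\right)$ (the tail running and entry costs being scaled by $e^{-\lambda\tau}$). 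Using $\tau\le Cd(x_{1},x_{2})$, $1-e^{-\lambda\tau}\le\lambda\tau$ and $\left|\mathsf{v}(x_{2})\right|\le M/\lambda$, this yields $\mathsf{v}(x_{1})-\mathsf{v}(x_{2})\le 2MC\,d(x_{1},x_{2})+\eta$; letting $\eta\to0$ and exchanging $x_{1},x_{2}$ gives the desired Lipschitz estimate on $\left(\Gamma_{i}\backslash\left\{O\right\}\right)\cap B(O,\varepsilon)$.

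For continuity on the whole open edge I would fix an interior point and compare $\mathsf{v}$ at two nearby points $se_{i},s'e_{i}$ (identifying $\Gamma_{i}$ with $\mathbb{R}^{+}$) by transporting a near-optimal control of one point onto the other. As long as both trajectories stay on $\Gamma_{i}$ away from $O$, the Lipschitz dependence of $f_{i}$ in $\left[H1\right]$ and Gronwall's lemma keep them within $\left|s-s'\right|e^{Lt}$, so the running-cost discrepancy is controlled by $\int_{0}^{\bar T}\omega\!\left(\left|s-s'\right|e^{Lt}\right)e^{-\lambda t}\,dt$ on any finite horizon $\bar T$. The comparison is then closed by a dynamic programming argument at a well-chosen time: if the reference trajectory enters $B(O,\varepsilon)$ before time $\bar T$, I stop at that entry, where the transported trajectory is within $\left|s-s'\right|e^{L\bar T}$ of $\varepsilon e_{i}$, and invoke the Lipschitz estimate near $O$ already proved; if instead the reference trajectory stays outside $B(O,\varepsilon)$ up to $\bar T$, both trajectories remain on $\Gamma_{i}$ with no switching, and I stop at $\bar T$, bounding the remaining discounted endpoint difference crudely by $e^{-\lambda\bar T}\cdot 2M/\lambda$ via the a priori bound. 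Choosing $\bar T$ large to absorb the tail and then letting $s'\to s$ makes both contributions vanish, and running the argument with the roles of $s,s'$ exchanged gives upper and lower semicontinuity, hence continuity.

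The main obstacle is exactly this last step: because $\left[H4\right]$ provides controllability only near $O$, I cannot synchronize two trajectories that reach $O$ at different — and a priori unbounded — times, so the naive ``follow the same control to the junction'' comparison fails at interior points. The resolution is to never rely on interior controllability: the discount factor $e^{-\lambda\bar T}$ together with the uniform bound $\left|\mathsf{v}\right|\le M/\lambda$ kills the far-future discrepancy, while the near-$O$ Lipschitz estimate handles precisely those trajectories that do approach the junction within the finite horizon. Finally, the Lipschitz bound on $\left(\Gamma_{i}\backslash\left\{O\right\}\right)\cap B(O,\varepsilon)$ makes $\mathsf{v}|_{\Gamma_{i}\backslash\left\{O\right\}}$ uniformly continuous there, so it extends uniquely to a Lipschitz function $v_{i}$ on $\Gamma_{i}\cap B(O,\varepsilon)$ with $v_{i}(O)=\lim_{\delta\to0^{+}}\mathsf{v}(\delta e_{i})$, as claimed.
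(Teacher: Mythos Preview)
Your proposal is correct and, for the Lipschitz estimate near $O$, follows exactly the paper's argument: use Lemma~\ref{Control t at x near O} to connect two nearby points of $\Gamma_i\setminus\{O\}$ in time $\tau\le C\,d(x_1,x_2)$, concatenate with a near-optimal control at the endpoint, and bound the difference using $|\ell|\le M$ and the a~priori bound on $\mathsf v$. The paper writes this in two lines; you spell out the same computation with the explicit constant $2MC$.

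Where you differ is in the treatment of interior continuity. The paper simply declares this ``classical'' and cites~\cite{ACCT2011}, whereas you give a self-contained argument: transport a near-optimal control, use Gronwall and the modulus $\omega$ from $[H2]$ on a finite horizon $\bar T$, and close the comparison either by the already-proved Lipschitz bound near $O$ (if the reference trajectory enters $B(O,\varepsilon)$ before $\bar T$) or by the discounted a~priori bound $e^{-\lambda\bar T}\cdot 2M/\lambda$ (if it does not). This is a sound way to handle the genuine difficulty you identify, namely that $[H4]$ gives no controllability away from $O$; the paper sidesteps the issue by citation while you resolve it explicitly. One small imprecision: your description of the switch-free competitor giving $|\mathsf v|\le M/\lambda$ is a bit loose (neither ``move away from $O$'' nor ``reach $O$'' is guaranteed everywhere), but the conclusion is correct since one can always remain in $\Gamma_i\setminus\{O\}$ by using the outward-pushing control of Lemma~\ref{Control t at x near O} whenever inside $B(O,r_0)$.
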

\begin{proof}[Proof of Lemma~\ref{lem: extension value function}]
The proof of continuity inside the edge  is classical by using $\left[H4\right]$, see \cite{ACCT2011} for more details. The proof of Lipschitz continuity is a consequence of Lemma~\ref{Control t at x near O}. Indeed, for $x,y$ belong to $\Gamma_{i}\cap B\left(0,\varepsilon\right)$, by Lemma~\ref{Control t at x near O} and the definition of value function, we have
\[
\mathsf{v}\left(x\right)-\mathsf{v}\left(z\right) = v_{i}\left(x\right)-v_{i}\left(z\right)\le\int_{0}^{\tau_{x,z}}\ell_{i}\left(y_{x,\alpha_{x,z}}\left(t\right),\alpha_{x,z}\left(t\right)\right)e^{-\lambda t}dt+v_{i}\left(z\right)\left(e^{-\lambda\tau_{x,z}}-1\right).
\]
Since $\ell_{i}$ is bounded by $M$ (by $\left[H2\right]$), $v_{i}$
is bounded in $\Gamma_{i}\cap B\left(O,\varepsilon\right)$ and $e^{-\lambda\tau_{x,z}}-1$
is bounded by $\tau_{x,y}$, there exists a constant $\overline{C}$
such that
\[
v_{i}\left(x\right)-v_{i}\left(z\right)\le\overline{C}\tau_{x,z}\le\overline{C}C\left|x-z\right|.
\]
The last inequality follows from the Lemma \ref{Control t at x near O}. The inequality $v_{i}\left(z\right)-v_{i}\left(x\right)\le\overline{C}C\left|x-z\right|$
is obtained in a similar way. The proof is done.
\end{proof}
Let us define the tangential Hamiltonian $H_{O}^{T}$ at vertex $O$
by
\begin{equation}
H_{O}^{T}={\displaystyle \max_{i=\overline{1,N} }\max_{a_{i}\in A_{i}^{O}}\left\{ -\ell_{j}\left(O,a_{j}\right)\right\} }={\displaystyle -\min_{i=\overline{1,N} }\min_{a_{i}\in A_{i}^{O}}\left\{ \ell_{j}\left(O,a_{j}\right)\right\} },\label{eq:2}
\end{equation}
where $A_{i}^{O}=\left\{ a_{i}\in A_{i}:f_{i}\left(O,a_{i}\right)=0\right\} .$
The relationship between the values $\mathsf{v}(O)$, $v_{i}\left(O\right)$ and $H_{O}^{T}$ will be given 
in the next theorem.
Hereafter, the proofs of the results will be supplied only for the value function with entry costs
 $\mathsf{v}$, the proofs concerning the value
function with exit costs $\widehat{\mathsf{v}}$ are totally similar.
\begin{thm}
\label{main theorem value function} Under assumption $\left[H\right]$, 
the value functions $\mathsf{v}$ and $\widehat{\mathsf{v}}$ satisfy
\[
\mathsf{v}\left(O\right)=\min\left\{ \min_{i=\overline{1,N}}\left\{ v_{i}\left(O\right)+c_{i}\right\} ,-\dfrac{H_{O}^{T}}{\lambda}\right\} ,
\]
and
\[
\widehat{\mathsf{v}}\left(O\right)=\min\left\{ \min_{i=\overline{1,N}}\left\{ \widehat{v}_{i}\left(O\right)\right\} ,-\dfrac{H_{O}^{T}}{\lambda}\right\} .
\]
\end{thm}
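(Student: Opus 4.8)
For this theorem the plan is to establish the two inequalities separately, treating only the entry-cost value function $\mathsf{v}$ and noting at the end that $\widehat{\mathsf{v}}$ is handled identically. I would first prove the upper bound $\mathsf{v}(O)\le\min\{\min_i\{v_i(O)+c_i\},-H_O^T/\lambda\}$ by exhibiting, for each competitor on the right, an admissible controlled trajectory whose cost approximates it. To bound $\mathsf{v}(O)$ by $-H_O^T/\lambda$, I choose $i$ and $a_i\in A_i^O$ realizing $\min_i\min_{a_i\in A_i^O}\ell_i(O,a_i)=-H_O^T$ and use the constant control $\alpha\equiv a_i$; since $f_i(O,a_i)=0$ the trajectory never leaves $O$, so no entry cost is charged and the cost is exactly $\int_0^\infty \ell_i(O,a_i)e^{-\lambda t}\,dt=-H_O^T/\lambda$. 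To bound $\mathsf{v}(O)$ by $v_i(O)+c_i$, I fix $\varepsilon,\delta>0$, set $x_\delta=\delta e_i$, and use Lemma~\ref{Control t at x near O} to steer from $O$ into $\Gamma_i$ up to $x_\delta$ in time $\tau_\delta\le C\delta$; this charges the single entry cost $c_i$ at time $0$ and a running cost of order $\delta$, after which I concatenate an $\varepsilon$-optimal trajectory issued from $x_\delta$. Letting $\delta\to0$ and invoking $\mathsf{v}(x_\delta)\to v_i(O)$ from Lemma~\ref{lem: extension value function} gives $\mathsf{v}(O)\le v_i(O)+c_i$.

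For the lower bound I would take any $(y_{O,\alpha},\alpha)\in\mathcal{T}_O$ with $J(O,\alpha)<+\infty$ and split into two cases. If $y_{O,\alpha}\equiv O$, then $\dot y=0$ forces $\alpha(t)$ to lie in some $A_j^O$ for a.e.\ $t$, whence $\ell(O,\alpha(t))\ge -H_O^T$ and $J(O,\alpha)\ge -H_O^T/\lambda$. Otherwise, let $\bar t=\inf\{t:y_{O,\alpha}(t)\neq O\}$ be the first entry time, say into $\Gamma_{i_0}$. On $[0,\bar t)$ the trajectory rests at $O$, so its running cost there is at least $-H_O^T(1-e^{-\lambda\bar t})/\lambda$, and at $\bar t$ the entry cost $c_{i_0}e^{-\lambda\bar t}$ is charged. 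The crucial ingredient is the continuation estimate: the cost accrued after $\bar t$, net of that $\bar t$-entry charge, is at least $e^{-\lambda\bar t}v_{i_0}(O)$. Granting this and setting $\theta=e^{-\lambda\bar t}\in(0,1]$ yields $J(O,\alpha)\ge(1-\theta)(-H_O^T/\lambda)+\theta(c_{i_0}+v_{i_0}(O))$, a convex combination of the two competitors and hence at least their minimum; taking the infimum over trajectories then closes the bound.

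It remains to justify the continuation estimate, which I expect to be the main obstacle, the delicate point being the correct bookkeeping of entry costs so as not to double-count the crossing into $\Gamma_{i_0}$. Writing $\tilde y(s)=y_{O,\alpha}(\bar t+s)$, which starts at $O$ and immediately enters $\Gamma_{i_0}$, the net continuation cost equals the cost $B$ of $\tilde y$ regarded as a trajectory \emph{issued on} $\Gamma_{i_0}$, so that the $s=0$ crossing carries no charge. For small $\sigma>0$ the trajectory stays in $\Gamma_{i_0}\setminus\{O\}$ on $(0,\sigma)$, so no entry cost occurs there, and a dynamic-programming splitting at time $\sigma$ gives $B\ge\int_0^\sigma\ell\,e^{-\lambda s}\,ds+e^{-\lambda\sigma}\mathsf{v}(\tilde y(\sigma))$, because the tail from $\tilde y(\sigma)\in\Gamma_{i_0}\setminus\{O\}$ is the cost of an admissible trajectory and is therefore $\ge\mathsf{v}(\tilde y(\sigma))$. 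Letting $\sigma\to0^+$, the integral vanishes, $e^{-\lambda\sigma}\to1$, and $\tilde y(\sigma)\to O$ along $\Gamma_{i_0}$, so $\mathsf{v}(\tilde y(\sigma))\to v_{i_0}(O)$ by the very definition of the extension in Lemma~\ref{lem: extension value function}; hence $B\ge v_{i_0}(O)$, as required.

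Finally, the statement for $\widehat{\mathsf{v}}$ follows from the same argument: one repeats the case analysis verbatim, replacing the entry charge $c_{i_0}e^{-\lambda\bar t}$ by the corresponding exit charges and the competitor $c_{i_0}+v_{i_0}(O)$ by $\widehat{v}_{i_0}(O)$, since in the exit-cost model entering an edge from $O$ is free while the future exit charges are already encoded in $\widehat{v}_{i_0}(O)$.
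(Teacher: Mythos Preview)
Your proof is correct and follows the same approach as the paper: the upper bound via explicit trajectories matches Lemmas~\ref{value function at O (1)} and~\ref{value function at O (2)}, and the lower bound via the first departure time from $O$ followed by a dynamic-programming split just after entry into $\Gamma_{i_0}$ (letting $\sigma\to 0^+$ so that $\mathsf{v}(\tilde y(\sigma))\to v_{i_0}(O)$) is exactly the paper's argument. Your direct lower bound on every finite-cost trajectory, together with the convex-combination observation $J(O,\alpha)\ge(1-\theta)(-H_O^T/\lambda)+\theta(c_{i_0}+v_{i_0}(O))$, is a mild streamlining of the paper's version, which instead works with $\varepsilon_n$-optimal controls and then performs a case analysis on $\bar t=\lim t_n\in\{0,+\infty\}\cup(0,+\infty)$, but the substance is identical.
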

\begin{rem}
Theorem \ref{main theorem value function} gives us the characterization
of the value function at vertex $O$. 
\end{rem}
The proof of Theorem~\ref{main theorem value function}, makes use of  Lemma~\ref{value function at O (1)}
and Lemma~\ref{value function at O (2)} below. 
\begin{lem}[Value functions $\mathsf{v}$ and $\widehat{\mathsf{v}}$ at $O$]
\label{value function at O (1)}Under assumption $\left[H\right]$, then
\[
\max_{i=\overline{1,N}}\left\{ v_{i}\left(O\right)\right\} \le \mathsf{v}\left(O\right)\le\min_{i=\overline{1,N}}\left\{ v_{i}\left(O\right)+c_{i}\right\} ,
\]
and
\[
\max_{i=\overline{1,N}}\left\{ \widehat{v}_{i}\left(O\right)-d_{i}\right\} \le\widehat{\mathsf{v}}\left(O\right)\le\min_{i=\overline{1,N}}\left\{ \widehat{v}_{i}\left(O\right)\right\} .
\]
\end{lem}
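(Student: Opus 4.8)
The plan is to establish both inequalities by a direct dynamic-programming argument: I construct explicit admissible controlled trajectories by concatenating a short connecting piece, produced by the strong controllability of Lemma~\ref{Control t at x near O}, with a near-optimal tail, and then shrink the connecting piece to a point. Throughout, $\tau_\delta$ denotes the travel time between $O$ and $\delta e_i$ furnished by Lemma~\ref{Control t at x near O}, so that $\tau_\delta\le C\delta\to0$ as $\delta\to0^+$; I also use that $\mathsf{v}(\delta e_i)=v_i(\delta e_i)\to v_i(O)$ by the continuous extension of Lemma~\ref{lem: extension value function}, and that $\ell_i$ is bounded by $M$ thanks to $[H2]$.

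For the upper bound $\mathsf{v}(O)\le\min_i\{v_i(O)+c_i\}$, fix an edge $\Gamma_i$ and $\varepsilon>0$, and for small $\delta>0$ pick $(y_{\delta e_i,\beta},\beta)\in\mathcal{T}_{\delta e_i}$ with $J(\delta e_i,\beta)\le\mathsf{v}(\delta e_i)+\varepsilon$. Starting from $O$, I steer into $\Gamma_i$ to reach $\delta e_i$ at time $\tau_\delta$ using the connecting trajectory of Lemma~\ref{Control t at x near O}, and then append $\beta$. This connecting piece leaves $O$ into $\Gamma_i\setminus\{O\}$ at the entry time $t_{i1}=0$, so it is charged the single entry cost $c_i$, and its running cost is at most $M\tau_\delta$; the appended tail is discounted by $e^{-\lambda\tau_\delta}$. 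Since $\beta$ starts inside $\Gamma_i$ and is therefore charged nothing for its own first interval, the bookkeeping is consistent and
\[
\mathsf{v}(O)\le c_i+M\tau_\delta+e^{-\lambda\tau_\delta}\bigl(\mathsf{v}(\delta e_i)+\varepsilon\bigr).
\]
Letting $\delta\to0^+$ and then $\varepsilon\to0$ gives $\mathsf{v}(O)\le v_i(O)+c_i$, and minimizing over $i$ yields the claim.

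For the lower bound $\max_i\{v_i(O)\}\le\mathsf{v}(O)$, fix $i$ and $\varepsilon>0$ and pick a near-optimal controlled trajectory $\gamma$ from $O$ with $J(O,\gamma)\le\mathsf{v}(O)+\varepsilon$. Starting now from $\delta e_i$, I steer to $O$ within $\Gamma_i$ in time $\tau_\delta$ and then append $\gamma$. The decisive observation is that this connecting piece moves toward the vertex inside the edge it already occupies, so it enters no new edge interior and is charged no entry cost; its running cost is again at most $M\tau_\delta$, and the tail is discounted by $e^{-\lambda\tau_\delta}$. Hence
\[
\mathsf{v}(\delta e_i)\le M\tau_\delta+e^{-\lambda\tau_\delta}\bigl(\mathsf{v}(O)+\varepsilon\bigr),
\]
and passing to the limit as before gives $v_i(O)\le\mathsf{v}(O)$, whence $\max_i\{v_i(O)\}\le\mathsf{v}(O)$.

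The only genuine obstacle is the accurate accounting of switching costs across each concatenation: one must check that precisely one cost $c_i$ is charged in the upper-bound construction and none in the lower-bound one, and that the factor $e^{-\lambda\tau_\delta}$ transports all switching costs of the appended tail correctly (here one uses repeatedly that a trajectory starting inside an edge pays no entry cost for its initial interval). The case of $\widehat{\mathsf{v}}$ is handled by the same two constructions; the difference is that the exit cost $d_i$ is charged when the connecting piece reaches $O$ from the interior of $\Gamma_i$. This keeps the upper bound $\widehat{\mathsf{v}}(O)\le\min_i\{\widehat v_i(O)\}$ cost-free but inserts a $d_i$ into the tail of the lower-bound construction, producing the shift $\max_i\{\widehat v_i(O)-d_i\}\le\widehat{\mathsf{v}}(O)$.
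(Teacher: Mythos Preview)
Your proof is correct and follows essentially the same approach as the paper's: both arguments concatenate a short connecting trajectory from Lemma~\ref{Control t at x near O} with an arbitrary (or $\varepsilon$-optimal) tail, bound the running cost of the connecting piece by $M\tau_\delta$, and pass to the limit $\delta\to0^+$. The only cosmetic difference is that you work with explicit $\varepsilon$-optimal controls while the paper takes an arbitrary control and then the infimum; your treatment of the switching-cost bookkeeping under concatenation is also somewhat more explicit than the paper's, but the substance is identical.
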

\begin{proof}[Proof of Lemma~\ref{value function at O (1)}]
We divide the proof into two parts.

\item\emph{Prove that $\max_{i=\overline{1,N}}\left\{ v_{i}\left(O\right)\right\} \le \mathsf{v}\left(O\right)$}. 
First, we fix $i\in\left\{ 1,\ldots,N\right\} $ and any control law
$\overline{\alpha}$ such that $\left(y_{O,\bar{\alpha}},\bar{\alpha}\right)\in\mathcal{T}_{O}$.
Let $x\in \Gamma_{i}\backslash\left\{ O\right\} $ such that $\left|x\right|$
is small. From Lemma~\ref{Control t at x near O}, there exists
a control law $\alpha_{x,O}$ connecting $x$ and $O$ and we consider
\[
\alpha\left(s\right)=\begin{cases}
\alpha_{x,O}\left(s\right) & \quad\mbox{if }s\le\tau_{x,O},\\
\bar{\alpha}\left(s-\tau_{x,O}\right) & \quad\mbox{if }s>\tau_{x,O}.
\end{cases}
\]
It means that the trajectory goes from $x$
to $O$ with the control law $\alpha_{x,O}$ and then proceeds with the control
law $\bar{\alpha}$. Therefore
\begin{align*}
\mathsf{v}\left(x\right) & =v_{i}\left(x\right)\le J\left(x,\alpha\right)=\int_{0}^{\tau_{x,O}}\ell_{i}\left(y_{x,\alpha}\left(s\right)\right)e^{-\lambda s}ds+e^{-\lambda\tau_{x,O}}J\left(O,\bar{\alpha}\right).
\end{align*}
Since $\overline{\alpha}$ is chosen arbitrarily and $\ell_{i}$ is
bounded by $M$, we get
\[
v_{i}\left(x\right)\le M\tau_{x,O}+e^{-\lambda\tau_{x,O}}\mathsf{v}\left(O\right). 
\]
Let $x$ tend to $O$ then $\tau_{x,O}$ tend to $0$ from Lemma
\ref{Control t at x near O}. Therefore, $v_{i}\left(O\right)\le \mathsf{v}\left(O\right)$.
Since the above inequality holds for $i=\overline{1,N} $,
we obtain that
\[
\max_{i=\overline{1,N}}\left\{ v_{i}\left(O\right)\right\} \le \mathsf{v}\left(O\right).
\]
\item\emph{Prove that $\mathsf{v}\left(O\right)\le\min_{i=\overline{1,N}}\left\{ v_{i}\left(O\right)+c_{i}\right\} $}.
For $i=\overline{1,N} $; we claim that $\mathsf{v}\left(O\right)\le v_{i}\left(O\right)+c_{i}$.
Consider $x\in \Gamma_{i}\backslash\left\{ O\right\} $ with $\left|x\right|$
 small enough and any control law $\bar{\alpha}_{x}$ such that $\left(y_{x,\bar{\alpha}_{x}},\bar{\alpha}_{x}\right)\in\mathcal{T}_{x}$.
From Lemma~\ref{Control t at x near O}, there exists a control
law $\alpha_{O,x}$ connecting $O$ and $x$ and we consider 
\[
\alpha\left(s\right)=\begin{cases}
\alpha_{O,x}\left(s\right) & \quad\mbox{if }s\le\tau_{O,x},\\
\bar{\alpha}_{x}\left(s-\tau_{O,x}\right) & \quad\mbox{if }s>\tau_{O,x}.
\end{cases}
\]
It means that the trajectory goes from $O$ to $x$ using the  control law $\alpha_{O,x}$
then proceeds with the control law $\bar{\alpha}_{x}$. Therefore
\[
\mathsf{v}\left(O\right)\le J\left(O,\alpha\right)=c_{i}+\int_{0}^{\tau_{O,x}}\ell_{i}\left(y_{O,\alpha}\left(s\right)\right)e^{-\lambda s}ds+e^{-\lambda\tau_{O,x}}J\left(x,\bar{\alpha}_{x}\right).
\]
Since $\overline{\alpha}_{x}$ is chosen arbitrarily and $\ell_{i}$
is bounded by $M$, we get
\[
\mathsf{v}\left(O\right)  \le  c_{i}+M\tau_{O,x}+e^{-\lambda\tau_{O,x}}v_{i}\left(x\right)
\]
Let $x$ tend to $O$ then $\tau_{O,x}$ tends to $0$ from Lemma
\ref{Control t at x near O}, then $\mathsf{v}\left(O\right)\le c_{i}+v_{i}\left(O\right).$
Since the above inequality holds for $i=\overline{1,N} $,
we obtain that
\[
\mathsf{v}\left(O\right)\le\min_{i=\overline{1,N}}\left\{ v_{i}\left(O\right)+c_{i}\right\} .
\]
\end{proof}
\begin{lem}
\label{value function at O (2)}The value functions $\mathsf{v}$
and $\widehat{\mathsf{v}}$ satisfy
\begin{equation}
  \label{eq:1}
 \mathsf{v}\left(O\right),\widehat{\mathsf{v}}\left(O\right) \le-\dfrac{H_{O}^{T}}{\lambda}
\end{equation}
where $H_{O}^{T}$ is defined in~\eqref{eq:2}.

\end{lem}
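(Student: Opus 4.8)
The plan is to exploit the strong controllability assumption $\left[H4\right]$ to produce an admissible trajectory that simply \emph{stays at the vertex} $O$ forever, and to observe that the discounted cost of such a trajectory is exactly $-H_O^T/\lambda$. Indeed, unwinding the definition~\eqref{eq:2}, we have
\[
-\frac{H_O^T}{\lambda} = \frac{1}{\lambda}\min_{i=\overline{1,N}}\min_{a_i\in A_i^O}\ell_i(O,a_i),
\]
so the right-hand side is precisely the infinite-horizon discounted running cost incurred by remaining at $O$ while using the cheapest control that produces zero velocity there.

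First I would check that the minimum above is attained. By $\left[H4\right]$ we have $\left[-\delta e_i,\delta e_i\right]\subset F_i(O)$, so in particular $0\in F_i(O)$, which means $A_i^O=\left\{a_i\in A_i: f_i(O,a_i)=0\right\}$ is nonempty. Since $f_i(O,\cdot)$ is continuous and $A_i$ is compact by $\left[H0\right]$, the set $A_i^O$ is a closed subset of a compact set, hence compact, and $\ell_i(O,\cdot)$ is continuous by $\left[H2\right]$; therefore the inner minimum is achieved. Let $i_0$ and $a^\star\in A_{i_0}^O$ realise the double minimum, so that $\ell_{i_0}(O,a^\star)=-H_O^T$.

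Next I would build the trajectory. Take the constant control $\alpha(t)\equiv a^\star$ for all $t\ge 0$. Because $f_{i_0}(O,a^\star)=0$, the function $y(t)\equiv O$ solves $y(t)=O+\int_0^t f(y(s),\alpha(s))\,ds$, so $\left(y,\alpha\right)\in\mathcal{T}_O$ is admissible and the trajectory never leaves $O$. Consequently the sets $T_i$ are empty for every $i$, there are no entry times $t_{ik}$ and no exit times $\eta_{ik}$, and both switching-cost sums in $J$ and $\widehat{J}$ are empty. Hence
\[
J(O,\alpha)=\widehat{J}(O,\alpha)=\int_0^{+\infty}\ell_{i_0}(O,a^\star)e^{-\lambda t}\,dt=\frac{\ell_{i_0}(O,a^\star)}{\lambda}=-\frac{H_O^T}{\lambda}.
\]
Taking the infimum over all admissible controlled trajectories in the definitions of $\mathsf{v}(O)$ and $\widehat{\mathsf{v}}(O)$ then yields $\mathsf{v}(O),\widehat{\mathsf{v}}(O)\le -H_O^T/\lambda$, which is~\eqref{eq:1}.

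There is no serious obstacle here: the argument is a direct construction. The only points that require a moment's care are the nonemptiness and compactness of $A_i^O$ (which follow immediately from $\left[H4\right]$, $\left[H0\right]$ and $\left[H2\right]$) and the observation that a trajectory pinned at $O$ pays neither entry nor exit costs, so that the \emph{same} computation settles both $\mathsf{v}$ and $\widehat{\mathsf{v}}$ at once.
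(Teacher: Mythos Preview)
Your proof is correct and follows essentially the same approach as the paper: pick $i_0$ and $a^\star\in A_{i_0}^O$ attaining the minimum in~\eqref{eq:2}, use the constant control $\alpha\equiv a^\star$ so the trajectory stays at $O$, and compute the cost directly. You are simply more explicit than the paper about why $A_i^O$ is nonempty and compact and why no switching costs are incurred.
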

\begin{proof}[Proof of Lemma~\ref{value function at O (2)}]
From~\eqref{eq:2}, there exists $j\in\left\{ 1,\ldots,N\right\} $ and $a_{j}\in A_{j}^{O}$
such that
\[
H_{O}^{T}=-\min_{i=\overline{1,N}}\min_{a_{i}\in A_{i}^{O}}\left\{ \ell_{i}\left(O,a_{i}\right)\right\} =-\ell_{j}\left(O,a_{j}\right)
\]
Let the control law $\alpha$ be defined by $\alpha\left(s\right)\equiv a_{j}$
for all $s$, then
\[
\mathsf{v}\left(O\right)\le J\left(O,\alpha\right)=\int_{0}^{+\infty}\ell_{j}\left(O,a_{j}\right)e^{-\lambda s}ds=\dfrac{\ell_{j}\left(O,a_{j}\right)}{\lambda}=-\dfrac{H_{O}^{T}}{\lambda}.
\]
\end{proof}
We are ready to prove Theorem~\ref{main theorem value function}.
\begin{proof}[Proof of Theorem~\ref{main theorem value function}]
According to Lemma~\ref{value function at O (1)} and Lemma~\ref{value function at O (2)},
\[
\mathsf{v}\left(O\right)\le\min\left\{ \min_{i=\overline{1,N}}\left\{ v_{i}\left(O\right)+c_{i}\right\} ,-\dfrac{H_{O}^{T}}{\lambda}\right\} .
\]
Assuming that
\begin{equation}
\mathsf{v}\left(O\right)<\min_{i=\overline{1,N}}\left\{ v_{i}\left(O\right)+c_{i}\right\} ,\label{recover value function}
\end{equation}
it is sufficient to prove that 
$\mathsf{v}\left(O\right)=-\dfrac{H_{O}^{T}}{\lambda}$.
By~\eqref{recover value function}, there exists a sequence $\left\{ \varepsilon_{n}\right\} _{n\in\mathbb{N}}$
such that $\varepsilon_{n}\rightarrow0$ and 
\[
\mathsf{v}\left(O\right)+\varepsilon_{n}<\min_{i=\overline{1,N}}\left\{ v_{i}\left(O\right)+c_{i}\right\}\quad\mbox{for all } n\in\mathbb{N}.
\]
On the other hand, there exists an $\varepsilon_{n}$-optimal
control $\alpha_{n}$, $\mathsf{v}\left(O\right)+\varepsilon_{n}>J\left(O,\alpha_{n}\right)$.
Let us define the first time that the  trajectory $y_{O,\alpha_{n}}$ leaves
$O$
\[
t_{n}:=\inf_{i=\overline{1,N} }T_{i}^{n},
\]
where $T_{i}^{n}$ is the set of times $t$ for which $y_{O,\alpha_{n}}(t)$
belongs to $\Gamma_{i}\backslash\left\{ O\right\} $. Notice that $t_{n}$
is possibly $+\infty$, in which case $y_{O,\alpha_{n}}\left(s\right)=O$ for all $s\in \left[0,+\infty\right)$. 
Extracting a subsequence if necessary, we may assume that $t_{n}$ tends to $\overline{t}\in\left[0,+\infty\right]$ when $\varepsilon_{n}$ tends to $0$.

If there exists a subsequence of $\left\{ t_{n}\right\} _{n\in\mathbb{N}}$
(which is still noted  $\left\{ t_{n}\right\} _{n\in\mathbb{N}}$) such
that $t_{n}=+\infty$ for all $n\in\mathbb{N}$,
 then for a.e. $s\in\left[0,+\infty\right)$
\[
\begin{cases}
f\left(y_{O,\alpha_{n}}\left(s\right),\alpha_{n}\left(s\right)\right) & =f\left(O,\alpha_{n}\left(s\right)\right)=0,\\
\ell\left(y_{O,\alpha_{n}}\left(s\right),\alpha_{n}\left(s\right)\right) & =\ell\left(O,\alpha_{n}\left(s\right)\right).
\end{cases}
\]
In this case, $\alpha_{n}\left(s\right)\in\cup_{i=1}^{N}A_{i}^{O}$
for a.e. $s\in\left[0,+\infty\right)$. Therefore, for a.e. $s\in\left[0,+\infty\right)$
\[
\ell\left(y_{O,\alpha_{n}}\left(s\right),\alpha_{n}\left(s\right)\right)=\ell\left(O,\alpha_{n}\left(s\right)\right)\ge-H_{O}^{T},
\]
and
\[
\mathsf{v}\left(O\right)+\varepsilon_{n}>J\left(O,\alpha_{n}\right)=\int_{0}^{+\infty}\ell\left(O,\alpha_{n}\left(s\right)\right)e^{-\lambda s}ds\ge\int_{0}^{+\infty}\left(-H_{O}^{T}\right)e^{-\lambda s}ds=-\dfrac{H_{O}^{T}}{\lambda}.
\]
By letting $n$ tend to $\infty$, we get $\mathsf{v}\left(O\right)\ge-\dfrac{H_{O}^{T}}{\lambda}$.
On the other hand, since $\mathsf{v}\left(O\right)\le-\dfrac{H_{O}^{T}}{\lambda}$
by Lemma~\ref{value function at O (2)}, this implies that $\mathsf{v}\left(O\right)=-\dfrac{H_{O}^{T}}{\lambda}$.

Let us now assume that $0\le t_{n}<+\infty$ for all $n$ large enough. Then, for a fixed $n$ and for any positive $\delta\le\delta_{n}$ where $\delta_{n}$ small enough, $y_{O,\alpha_{n}}\left(s\right)$ still belongs to some $\Gamma_{i\left(n\right)}\backslash\left\{ O\right\} $
for all $s\in\left(t_{n},t_{n}+\delta\right]$. We have
\begin{eqnarray*}
\mathsf{v}\left(O\right)+\varepsilon_{n} & > & J\left(O,\alpha_{n}\right)\\
 & = & \int_{0}^{t_{n}}\ell\left(y_{O,\alpha_{n}}\left(s\right),\alpha_{n}\left(s\right)\right)e^{-\lambda s}ds+c_{i\left(n\right)}e^{-\lambda t_{n}}+\int_{t_{n}}^{t_{n}+\delta}\ell_{i\left(n\right)}\left(y_{O,\alpha_{n}}\left(s\right),\alpha_{n}\left(s\right)\right)e^{-\lambda s}ds\\
 &  & +e^{-\lambda\left(t_{n}+\delta\right)}J\left(y_{O,\alpha_{n}}\left(t_{n}+\delta\right),\alpha_{n}\left(\cdot+t_{n}+\delta\right)\right)\\
 & \ge & \int_{0}^{t_{n}}\ell\left(y_{O,\alpha_{n}}\left(s\right),\alpha_{n}\left(s\right)\right)e^{-\lambda s}ds+c_{i\left(n\right)}e^{-\lambda t_{n}}+\int_{t_{n}}^{t_{n}+\delta}\ell_{i\left(n\right)}\left(y_{O,\alpha_{n}}\left(s\right),\alpha_{n}\left(s\right)\right)e^{-\lambda s}ds\\
 &  & +e^{-\lambda\left(t_{n}+\delta\right)}v\left(y_{O,\alpha_{n}}\left(t_{n}+\delta\right)\right)\\
 & = & \int_{0}^{t_{n}}\ell\left(y_{O,\alpha_{n}}\left(s\right),\alpha_{n}\left(s\right)\right)e^{-\lambda s}ds+c_{i\left(n\right)}e^{-\lambda t_{n}}+\int_{t_{n}}^{t_{n}+\delta}\ell_{i\left(n\right)}\left(y_{O,\alpha_{n}}\left(s\right),\alpha_{n}\left(s\right)\right)e^{-\lambda s}ds\\
 &  & +e^{-\lambda\left(t_{n}+\delta\right)}v_{i\left(n\right)}\left(y_{O,\alpha_{n}}\left(t_{n}+\delta\right)\right).
\end{eqnarray*}
By letting $\delta$ tend to $0$, 
\[
\mathsf{v}\left(O\right)+\varepsilon_{n}\ge\int_{0}^{t_{n}}\ell\left(y_{O,\alpha_{n}}\left(s\right),\alpha_{n}\left(s\right)\right)e^{-\lambda s}ds+c_{i\left(n\right)}e^{-\lambda t_{n}}+e^{-\lambda t_{n}}v_{i\left(n\right)}\left(O\right).
\]
Note that $y_{O,\alpha_{n}}\left(s\right)=O$ for all $s\in\left[0,t_{n}\right]$,
i.e., $f\left(O,\alpha_{n}\left(s\right)\right)=0$ a.e. $s\in\left[0,t_{n}\right)$.
Hence
\begin{eqnarray*}
\mathsf{v}\left(O\right)+\varepsilon_{n} & \ge & \int_{0}^{t_{n}}\ell\left(O,\alpha_{n}\left(s\right)\right)e^{-\lambda s}ds+c_{i\left(n\right)}e^{-\lambda t_{n}}+e^{-\lambda t_{n}}v_{i\left(n\right)}\left(O\right)\\
 & \ge & \int_{0}^{t_{n}}\left(-H_{O}^{T}\right)e^{-\lambda s}ds+c_{i\left(n\right)}e^{-\lambda t_{n}}+e^{-\lambda t_{n}}v_{i\left(n\right)}\left(O\right)\\
 & = & \dfrac{1-e^{-\lambda t_{n}}}{\lambda}\left(-H_{O}^{T}\right)+c_{i\left(n\right)}e^{-\lambda t_{n}}+e^{-\lambda t_{n}}v_{i\left(n\right)}\left(O\right).
\end{eqnarray*}
Choose a subsequence $\left\{ \varepsilon_{n_{k}}\right\} _{k\in\mathbb{N}}$
of $\left\{ \varepsilon_{n}\right\} _{n\in\mathbb{N}}$ such that 
for some $i_{0}\in\left\{ 1,\ldots,N\right\} $, $c_{i\left(n_{k}\right)}=c_{i_{0}}$ for all $k$. By letting
$k$ tend to $\infty$, recall that $\lim_{k\rightarrow\infty}t_{n_{k}}=\overline{t}$, we have three possible cases
\begin{enumerate}
  \item If $\overline{t}=+\infty$, then $\mathsf{v}\left(O\right)\ge-\dfrac{H_{O}^{T}}{\lambda}.$
By Lemma~\ref{value function at O (2)}, we obtain $\mathsf{v}\left(O\right)=-\dfrac{H_{O}^{T}}{\lambda}$.
\item  If $\overline{t}=0$, then $\mathsf{v}\left(O\right)\ge c_{i_{0}}+v_{i_{0}}\left(O\right)$.
By~\eqref{recover value function}, we obtain a  contradiction.
\item If $\overline{t}\in\left(0,+\infty\right)$, then $\mathsf{v}\left(O\right)\ge\dfrac{1-e^{-\lambda\overline{t}}}{\lambda}\left(-H_{O}^{T}\right)+\left[c_{i_{0}}+v_{i_{0}}\left(O\right)\right]e^{-\lambda\overline{t}}.$
By~\eqref{recover value function}, $c_{i_{0}}+v_{i_{0}}\left(O\right)>\mathsf{v}\left(O\right)$,
so
\[
\mathsf{v}\left(O\right)>\dfrac{1-e^{-\lambda\overline{t}}}{\lambda}\left(-H_{O}^{T}\right)+\mathsf{v}\left(O\right)e^{-\lambda\overline{t}}.
\]
This yields $\mathsf{v}\left(O\right)>-\dfrac{H_{O}^{T}}{\lambda}$, and finally obtain a contradiction by
Lemma~\ref{value function at O (2)}. 
\end{enumerate}
\end{proof}
\section{The Hamilton-Jacobi systems. Viscosity solutions}
\label{sec:hamilt-jacobi-syst}
\subsection{Test-functions}
\begin{defn}
A function $\varphi:\Gamma_{1}\times\ldots\times \Gamma_{N}\rightarrow\mathbb{R}^{N}$ is an admissible test-function if there exists 
$(\varphi_{i})_{i=\overline{1,N}}$,   $\varphi_{i} \in C^{1}\left(\Gamma_{i}\right)$, such that
 $\varphi\left(x_{1},\ldots,x_{N}\right)=\left(\varphi_{1}\left(x_{1}\right),\ldots,\varphi_{N}\left(x_{N}\right)\right)$.
The set of admissible test-function is denoted by $\mathcal{R}\left(\mathcal{G}\right)$.
\end{defn}

\subsection{Definition of viscosity solution}
\begin{defn}[Hamiltonian]
We define the Hamiltonian $H_{i}:\Gamma_{i}\times\mathbb{R}\rightarrow\mathbb{R}$
by
\[
H_{i}\left(x,p\right)=\max_{a\in A_{i}}\left\{ -pf_{i}\left(x,a\right)-\ell_{i}\left(x,a\right)\right\} 
\]
and the Hamiltonian $H_{i}^{+} (O,\cdot) : \mathbb{R}\rightarrow\mathbb{R}$
by
\[
H_{i}^{+}\left(O,p\right)=\max_{a\in A_{i}^{+}}\left\{ -pf_{i}\left(O,a\right)-\ell_{i}\left(O,a\right)\right\} ,
\]
where $A_{i}^{+}=\left\{ a_{i}\in A_{i}:f_{i}\left(O,a_{i}\right)\ge0\right\} $.
Recall that  {\sl the tangential Hamiltonian at $O$}, $H_{O}^{T}$, has been defined in~\eqref{eq:2}.
\end{defn}
We now introduce the Hamilton-Jacobi system for the case with entry costs
\begin{equation}
\begin{array}{cc}
\lambda u_{i}\left(x\right)+H_{i}\left(x,\dfrac{d u_{i}}{d x_{i}}\left(x\right)\right)=0 & \mbox{if \ensuremath{x\in \Gamma_{i}\backslash \left\{O\right\}},}\\
{\displaystyle \lambda u_{i}\left(O\right)+\max\left\{ -\lambda\min_{j\ne i}\left\{ u_{j}\left(O\right)+c_{j}\right\} ,H_{i}^{+}\left(O,\dfrac{d u_{i}}{d x_{i}}\left(O\right)\right),H_{O}^{T}\right\} =0} & \mbox{if \ensuremath{x=O},}
\end{array}\label{eq: Hamilton Jacobi entry}
\end{equation}
for all $i=\overline{1,N}$ and the Hamilton-Jacobi system with exit
costs
\begin{equation}
\begin{array}{cc}
\lambda\widehat{u}_{i}\left(x\right)+H_{i}\left(x,\dfrac{d\widehat{u}_{i}}{d x_{i}}\left(x\right)\right)=0 & \mbox{if \ensuremath{x\in \Gamma_{i}\backslash \left\{O\right\}},}\\
{\displaystyle \lambda\widehat{u}_{i}\left(O\right)+\max\left\{ -\lambda\min_{j\ne i}\left\{ \widehat{u}_{j}\left(O\right)+d_{i}\right\} ,H_{i}^{+}\left(O,\dfrac{d\widehat{u}_{i}}{dx_{i}}\left(O\right)\right),H_{O}^{T}-\lambda d_{i}\right\} =0} & \mbox{if \ensuremath{x=O}, }
\end{array}\label{eq: Hamilton Jacobi exit}
\end{equation}
for all $i=\overline{1,N}$ and their viscosity solutions.
\begin{defn}[Viscosity solution with entry costs]
$ $

$\bullet$ A function $u:=\left(u_{1},\ldots,u_{N}\right)$
where $u_{i}\in USC\left(\Gamma_{i};\mathbb{R}\right)$ for all $i=\overline{1,N}$, is
called a \emph{viscosity sub-solution} of~\eqref{eq: Hamilton Jacobi entry}
if for any $\left(\varphi_{1},\ldots,\varphi_{N}\right)\in\mathcal{R}\left(\mathcal{G}\right)$,
any $i=\overline{1,N}$ and any $x_{i}\in \Gamma_{i}$ such that $u_{i}-\varphi_{i}$
has a \emph{local maximum point} on $\Gamma_{i}$ at $x_{i}$, then
\[
\begin{array}{cc}
\lambda u_{i}\left(x_{i}\right)+H_{i}\left(x,\dfrac{d\varphi_{i}}{d x_{i}}\left(x_{i}\right)\right)\le0 & \mbox{if \ensuremath{x_{i}\in \Gamma_{i}\backslash \left\{O\right\}},}\\
{\displaystyle \lambda u_{i}\left(O\right)+\max\left\{ -\lambda\min_{j\ne i}\left\{ u_{j}\left(O\right)+c_{j}\right\} ,H_{i}^{+}\left(O,\dfrac{d\varphi_{i}}{dx_{i}}\left(O\right)\right),H_{O}^{T}\right\} }\le0 & \mbox{if \ensuremath{x_{i}=O}.}
\end{array}
\]

$\bullet$ A function $u:=\left(u_{1},\ldots,u_{N}\right)$
where $u_{i}\in LSC\left(\Gamma_{i};\mathbb{R}\right)$ for all  $i=\overline{1,N}$, is
called a \emph{viscosity super-solution} of~\eqref{eq: Hamilton Jacobi entry}
if for any $\left(\varphi_{1},\ldots,\varphi_{N}\right)\in\mathcal{R}\left(\mathcal{G}\right)$,
any $i=\overline{1,N}$ and any $x_{i}\in \Gamma_{i}$ such that $u_{i}-\varphi_{i}$
has a \emph{local minimum point} on $\Gamma_{i}$ at $x_{i}$, then
\[
\begin{array}{cc}
\lambda u_{i}\left(x_{i}\right)+H_{i}\left(x_{i},\dfrac{d\varphi_{i}}{d x_{i}}\left(x_{i}\right)\right)\ge0 & \mbox{if \ensuremath{x_{i}\in \Gamma_{i}\backslash \left\{O\right\}},}\\
{\displaystyle \lambda u_{i}\left(O\right)+\max\left\{ -\lambda\min_{j\ne i}\left\{ u_{j}\left(O\right)+c_{j}\right\} ,H_{i}^{+}\left(O,\dfrac{d\varphi_{i}}{dx_{i}}\left(O\right)\right),H_{O}^{T}\right\} \ge0} & \mbox{if \ensuremath{x_{i}=O}.}
\end{array}
\]

$\bullet$ A functions $u:=\left(u_{1},\ldots,u_{N}\right)$
where $u_{i}\in C\left(\Gamma_{i};\mathbb{R}\right)$ for all $i=\overline{1,N}$, is called
a \emph{viscosity solution} of~\eqref{eq: Hamilton Jacobi entry}
if it is both a viscosity sub-solution and a viscosity super-solution
of~\eqref{eq: Hamilton Jacobi entry}.
\end{defn}

\begin{defn}[Viscosity solution with exit costs]
$ $

$\bullet$ A function $\widehat{u}:=\left(\widehat{u}_{1},\ldots,\widehat{u}_{N}\right)$
where $\widehat{u}_{i}\in USC\left(\Gamma_{i};\mathbb{R}\right)$ for all $i=\overline{1,N}$,
is called a \emph{viscosity sub-solution} of~\eqref{eq: Hamilton Jacobi exit}
if for any $\left(\psi_{1},\ldots,\psi_{N}\right)\in\mathcal{R}\left(\mathcal{G}\right)$,
any $i=\overline{1,N}$ and any $y_{i}\in \Gamma_{i}$ such that $\widehat{u}_{i}-\psi_{i}$
has a \emph{local maximum point} on $\Gamma_{i}$ at $y_{i}$, then
\[
\begin{array}{cc}
\lambda\widehat{u}_{i}\left(y_{i}\right)+H_{i}\left(y_{i},\dfrac{d\psi_{i}}{d x_{i}}\left(y_{i}\right)\right)\le0 & \mbox{if \ensuremath{y_{i}\in \Gamma_{i}\backslash \left\{O\right\}},}\\
{\displaystyle \lambda\widehat{u}_{i}\left(O\right)+\max\left\{ -\lambda\min_{j\ne i}\left\{ \widehat{u}_{j}\left(O\right)\right\} -\lambda d_{i},H_{i}^{+}\left(O,\dfrac{d\psi_{i}}{dx_{i}}\left(O\right)\right),H_{O}^{T}-\lambda d_{i}\right\} }\le0 & \mbox{if \ensuremath{y_{i}=O}.}
\end{array}
\]

$\bullet$ A function $\widehat{u}:=\left(\widehat{u}_{1},\ldots,\widehat{u}_{N}\right)$
where $\widehat{u}_{i}\in LSC\left(\Gamma_{i};\mathbb{R}\right)$ for all $i=\overline{1,N}$,
is called a \emph{viscosity super-solution} of~\eqref{eq: Hamilton Jacobi exit}
if for any $\left(\psi_{1},\ldots,\psi_{N}\right)\in\mathcal{R}\left(\mathcal{G}\right)$,
any $i=\overline{1,N}$ and any $y_{i}\in \Gamma_{i}$ such that $u_{i}-\psi_{i}$
has a \emph{local minimum point} on $\Gamma_{i}$ at $y_{i}$, then
\[
\begin{array}{cc}
\lambda\widehat{u}_{i}\left(y_{i}\right)+H_{i}\left(y_{i},\dfrac{d\psi_{i}}{d x_{i}}\left(y_{i}\right)\right)\ge0 & \mbox{if \ensuremath{y_{i}\in \Gamma_{i}\backslash \left\{O\right\}},}\\
{\displaystyle \lambda\widehat{u}_{i}\left(O\right)+\max\left\{ -\lambda\min_{j\ne i}\left\{ \widehat{u}_{j}\left(O\right)\right\} -\lambda d_{i},H_{i}^{+}\left(O,\dfrac{d\psi_{i}}{dx_{i}}\left(O\right)\right),H_{O}^{T}-\lambda d_{i}\right\} }\ge0 & \mbox{if \ensuremath{y_{i}=O}.}
\end{array}
\]

$\bullet$ A functions $\widehat{u}:=\left(\widehat{u}_{1},\ldots,\widehat{u}_{N}\right)$
where $\widehat{u}_{i}\in C\left(\Gamma_{i};\mathbb{R}\right)$ for all $i=\overline{1,N}$,
is called a \emph{viscosity solution} of~\eqref{eq: Hamilton Jacobi exit}
if it is both a viscosity sub-solution and a viscosity super-solution
of~\eqref{eq: Hamilton Jacobi exit}.
\end{defn}
\begin{rem}
This notion of viscosity solution is consitent with the one of~\cite{AOT2015}. It can be seen in Section~\ref{sec:gener-case-switch} when all the switching costs are zero, our definition and the one of~\cite{AOT2015} coincide.
\end{rem}

\section{Connections between the value functions and the Hamilton-Jacobi systems. }
\label{sec:relation-between-two}
Let $\mathsf{v}$ be the value function of the optimal control problem with entry costs and $\widehat{\mathsf{v}}$ be
a value function  of the optimal control problem with exit costs.
Recall that $v_{i},\widehat{v}_{i}:\Gamma_{i}\rightarrow\mathbb{R}$ are defined in Lemma~\ref{lem: extension value function} by
\[
{\displaystyle \begin{cases}
v_{i}\left(x\right)=\mathsf{v}\left(x\right)\quad\mbox{if }x\in \Gamma_{i}\backslash\left\{ O\right\} ,\\
v_{i}\left(O\right)=\lim_{\Gamma_{i}\backslash\left\{ O\right\} \ni x\rightarrow O}\mathsf{v}\left(x\right),
\end{cases}}
\quad \hbox{and}\quad
{\displaystyle \begin{cases}
\widehat{v}_{i}\left(x\right)=\widehat{\mathsf{v}}\left(x\right)\quad\mbox{if }x\in \Gamma_{i}\backslash\left\{ O\right\} ,\\
\widehat{v}_{i}\left(O\right)=\lim_{\Gamma_{i}\backslash\left\{ O\right\} \ni x\rightarrow O}\widehat{\mathsf{v}}\left(x\right).
\end{cases}}
\]

We wish to prove that ${v}:=\left(v_{1},v_{2},\ldots,v_{N}\right)$
and $\widehat{{v}}:=\left(\widehat{v}_{1},\ldots,\widehat{v}_{N}\right)$
are respectively  viscosity solutions of~\eqref{eq: Hamilton Jacobi entry}
and~\eqref{eq: Hamilton Jacobi exit}. In fact, since $\mathcal{G}\backslash\left\{ O\right\} $ is a finite union of open intervals in which the classical theory can be applied,
we obtain that $v_{i}$ and $\widehat{v}_{i}$ are viscosity solutions
of 
\[
\lambda u\left(x\right)+H_{i}\left(x,Du\left(x\right)\right)=0\quad\mbox{in }\Gamma_{i}\backslash\left\{ O\right\} .
\]
Therefore, we can restrict ourselves to prove the following theorem.
\begin{thm}
\label{thm: Existence}For $i=\overline{1,N}$, the function $v_{i}$
satisfies
\[
\lambda v_{i}\left(O\right)+\max\left\{ -\lambda\min_{j\ne i}\left\{ v_{j}\left(O\right)+c_{j}\right\} ,H_{i}^{+}\left(O,\dfrac{d v_{i}}{d x_{i}}\left(O\right)\right),H_{O}^{T}\right\} =0
\]
in the viscosity sense.
The function $\widehat{v}_{i}$ satisfies
\[
\lambda\widehat{v}_{i}\left(O\right)+\max\left\{ -\lambda\min_{j\ne i}\left\{ \widehat{v}_{j}\left(O\right)+d_{i}\right\} ,H_{i}^{+}\left(O,\dfrac{d\widehat{v}_{i}}{d x_{i}}\left(O\right)\right),H_{O}^{T}-\lambda d_{i}\right\} =0
\]
in the viscosity sense.
\end{thm}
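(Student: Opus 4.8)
The plan is to establish, for each $i$, that $v_i$ is simultaneously a viscosity sub- and super-solution of the vertex relation of Theorem~\ref{thm: Existence}; since the interior equation on $\Gamma_i\setminus\{O\}$ is already settled by the classical theory, everything reduces to analysing a test-function $\varphi_i\in C^{1}(\Gamma_i)$ for which $v_i-\varphi_i$ has a local extremum at $O$. Throughout I write $T_1=-\lambda\min_{j\ne i}\{v_j(O)+c_j\}$, $T_2=H_i^{+}(O,\frac{d\varphi_i}{dx_i}(O))$ and $T_3=H_O^{T}$, and I freely use the bounds $\max_k v_k(O)\le \mathsf{v}(O)\le\min_k\{v_k(O)+c_k\}$ and $\mathsf{v}(O)\le -H_O^{T}/\lambda$ from Lemmas~\ref{value function at O (1)}--\ref{value function at O (2)}, together with the characterisation $\mathsf{v}(O)=\min\{\min_k\{v_k(O)+c_k\},-H_O^{T}/\lambda\}$ of Theorem~\ref{main theorem value function}.

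For the \emph{sub-solution} inequality I must show $\lambda v_i(O)+\max\{T_1,T_2,T_3\}\le 0$, i.e. that all three terms are $\le 0$. The terms $T_1$ and $T_3$ are immediate: since $v_i(O)\le\mathsf{v}(O)\le\min_k\{v_k(O)+c_k\}\le\min_{j\ne i}\{v_j(O)+c_j\}$ one gets $\lambda v_i(O)+T_1\le 0$, and since $v_i(O)\le\mathsf{v}(O)\le -H_O^{T}/\lambda$ one gets $\lambda v_i(O)+T_3\le 0$. The only genuine computation is $\lambda v_i(O)+T_2\le 0$: fixing $a\in A_i^{+}$ and the trajectory issued from $x_\varepsilon=\varepsilon e_i$ driven by the constant control $a$ (which, as $f_i(O,a)\ge 0$, stays in $\Gamma_i$ for short times), the sub-optimality form of the dynamic programming principle gives $v_i(x_\varepsilon)\le\int_0^{t}\ell_i(y(s),a)e^{-\lambda s}\,ds+e^{-\lambda t}v_i(y(t))$; letting $\varepsilon\to 0$, inserting the test-function inequality $v_i(y(t))\le\varphi_i(y(t))-\varphi_i(O)+v_i(O)$, dividing by $t$ and letting $t\to 0^{+}$ yields $\lambda v_i(O)-\frac{d\varphi_i}{dx_i}(O)f_i(O,a)-\ell_i(O,a)\le 0$; maximising over $a\in A_i^{+}$ gives $\lambda v_i(O)+T_2\le 0$, the degenerate controls with $f_i(O,a)=0$ being absorbed into $T_3$.

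For the \emph{super-solution} inequality I must instead exhibit \emph{one} term with $\lambda v_i(O)+\max\{T_1,T_2,T_3\}\ge 0$, and this is the delicate part. The plan is to take $x_\varepsilon=\varepsilon e_i\to O$, choose $\varepsilon$-optimal controls $\alpha_\varepsilon$, and let $\theta_\varepsilon$ be the first time the trajectory $y_\varepsilon$ hits $O$; I then split on $\liminf_\varepsilon\theta_\varepsilon$. If $\liminf_\varepsilon\theta_\varepsilon=0$, then along a subsequence $y_\varepsilon$ reaches $O$ in vanishing time and with no entry cost, so the DPP gives $v_i(x_\varepsilon)\ge\int_0^{\theta_\varepsilon}\ell_i e^{-\lambda s}\,ds+e^{-\lambda\theta_\varepsilon}\mathsf{v}(O)-o(1)$, whence $v_i(O)\ge\mathsf{v}(O)$ and therefore $v_i(O)=\mathsf{v}(O)$; I then invoke Theorem~\ref{main theorem value function}: if $\mathsf{v}(O)=-H_O^{T}/\lambda$ then $\lambda v_i(O)+T_3=0$, while if $\mathsf{v}(O)=v_{j_0}(O)+c_{j_0}$ then (using $c_i>0$ to rule out $j_0=i$) one has $\min_{j\ne i}\{v_j(O)+c_j\}\le v_i(O)$, i.e. $\lambda v_i(O)+T_1\ge 0$. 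If instead $\liminf_\varepsilon\theta_\varepsilon>0$, the trajectories remain in $\Gamma_i$ on a fixed interval, the limiting trajectory issued from $O$ stays in $\Gamma_i$, and the super-optimality principle with the test-function from below, passed to the limit, produces a limiting control in $A_i^{+}$ and yields $\lambda v_i(O)+T_2\ge 0$ (the sub-case where the limit trajectory idles at $O$ giving $\lambda v_i(O)+T_3\ge 0$).

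The main obstacle is precisely this super-solution step: controlling the near-optimal trajectories uniformly as $\varepsilon\to 0$, extracting convergent subsequences of the controls—where the convexity $[H3]$ and the strong controllability $[H4]$, through Lemma~\ref{Control t at x near O}, are essential—and matching the three dynamical regimes (reaching $O$, moving into the edge, idling at $O$) to the three terms $T_1,T_2,T_3$ while bookkeeping the entry costs. The sub-solution direction, by contrast, is routine once $T_1$ and $T_3$ are disposed of by the two lemmas. Finally, the argument for $\widehat v_i$ is identical after replacing the entry-cost switching term by the exit-cost structure of~\eqref{eq: Hamilton Jacobi exit}, as already announced.
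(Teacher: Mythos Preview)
Your plan is correct and mirrors the paper's proof: the sub-solution inequality is obtained exactly as in Lemma~\ref{sup_property 1} (the paper also restricts to $f_i(O,a)>0$ and then closes up to $A_i^+$, via Corollary~\ref{corollary: max=00003Dsup}, which is equivalent to your ``absorb $f_i(O,a)=0$ into $T_3$''), while for the super-solution the paper first isolates as Lemma~\ref{super_property 1} the statement that under $v_i(O)<\min\{\min_{j\ne i}\{v_j(O)+c_j\},-H_O^T/\lambda\}$ the $\varepsilon$-optimal trajectories remain in $\Gamma_i\setminus\{O\}$ on a fixed interval, and then runs the super-optimality limit in Lemma~\ref{super_property 2}. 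Your dichotomy on $\liminf_\varepsilon\theta_\varepsilon$ is simply the unlemmatized form of the same argument (your Case~A is the contrapositive of Lemma~\ref{super_property 1} combined with Theorem~\ref{main theorem value function}, your Case~B is Lemma~\ref{super_property 2}), so the two routes coincide.
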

The proof of Theorem~\ref{thm: Existence} follows from
Lemmas~\ref{sup_property 1} and~\ref{super_property 2} below. We focus on
$v_{i}$ since the proof for $\widehat{v}_{i}$ is similar.
\begin{lem}
\label{sup_property 1}For $i=\overline{1,N}$, the function $v_{i}$ is a viscosity sub-solution of~\eqref{eq: Hamilton Jacobi entry} at ${O}$.
\end{lem}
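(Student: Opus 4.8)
The plan is to unwind the definition of viscosity sub-solution at $O$: the required inequality
\[
\lambda v_i(O) + \max\left\{ -\lambda \min_{j \ne i}\{v_j(O)+c_j\},\ H_i^+\left(O, \tfrac{d\varphi_i}{dx_i}(O)\right),\ H_O^T\right\} \le 0
\]
holds \emph{if and only if} each of the three entries, once added to $\lambda v_i(O)$, is nonpositive. So I would split the statement into three independent scalar inequalities and dispatch the first and third with the structural results already established, reserving the genuine control-theoretic work for the middle ($H_i^+$) term.

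For the first and third inequalities I would use only the left-hand bound $v_i(O) \le \mathsf{v}(O)$ from Lemma~\ref{value function at O (1)}. Combined with Theorem~\ref{main theorem value function} (equivalently, the right-hand bound of Lemma~\ref{value function at O (1)} and Lemma~\ref{value function at O (2)}), this yields $v_i(O) \le \mathsf{v}(O) \le \min_{j}\{v_j(O)+c_j\} \le \min_{j\ne i}\{v_j(O)+c_j\}$ and $v_i(O) \le \mathsf{v}(O) \le -H_O^T/\lambda$; equivalently, the first term $-\lambda\min_{j\ne i}\{v_j(O)+c_j\}$ and the third term $H_O^T$ are both bounded by $-\lambda v_i(O)$, with no further effort.

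The core is the inequality $\lambda v_i(O) + H_i^+(O, p) \le 0$ with $p = \tfrac{d\varphi_i}{dx_i}(O)$, i.e. $\lambda v_i(O) - p f_i(O,a) - \ell_i(O,a) \le 0$ for every $a \in A_i^+$. For $a$ with $f_i(O,a) = 0$ this reduces to $\lambda v_i(O) - \ell_i(O,a) \le 0$, and since $-\ell_i(O,a) \le H_O^T$ it is already covered by the third inequality. For $a$ with $f_i(O,a) > 0$ I would establish a \emph{sub-optimality dynamic programming inequality at $O$ that stays on edge $i$ and pays no entry cost}: letting $y$ be the trajectory issued from $O$ with the constant control $a$ (which, since $f_i(O,a) > 0$, enters $\Gamma_i \setminus \{O\}$ and remains there for small times), I claim $v_i(O) \le \int_0^t \ell_i(y(s),a) e^{-\lambda s}\, ds + e^{-\lambda t} v_i(y(t))$ for $t$ small. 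This would be obtained by writing the ordinary dynamic programming inequality for $\mathsf{v}$ at interior points $x_\varepsilon = \varepsilon e_i$, where \emph{no} switching cost appears because the approximating trajectories remain in the open edge, and then letting $\varepsilon \to 0$ using the Lipschitz continuity of $v_i$ near $O$ (Lemma~\ref{lem: extension value function}) together with continuous dependence of the trajectories on the initial point. Inserting the local-maximum inequality $v_i(y(t)) \le v_i(O) - \varphi_i(O) + \varphi_i(y(t))$, dividing by $t$ and sending $t \to 0^+$ then gives $\lambda v_i(O) \le p f_i(O,a) + \ell_i(O,a)$; taking the supremum over $a \in A_i^+$ produces the middle inequality and completes the proof.

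I expect the main obstacle to be precisely this dynamic programming step: one must justify that the entry cost $c_i$ does \emph{not} enter (so that $v_i(O)$, rather than $\mathsf{v}(O)$, is controlled), which is exactly why the inequality has to be obtained by approximation from the interior of $\Gamma_i$ instead of from a trajectory launched directly from $O$, and one must guarantee that the approximating trajectories stay in $\Gamma_i \setminus \{O\}$ so that the vertex is never crossed and no spurious cost is charged. The strict inequality $f_i(O,a) > 0$ together with the Lipschitz bound on $f_i$ from $[H1]$ is what makes the latter possible for $\varepsilon$ and $t$ small, while the degenerate controls $f_i(O,a)=0$ are deliberately routed through the $H_O^T$ inequality to avoid trajectories that could slide back to $O$.
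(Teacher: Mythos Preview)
Your proposal is correct and follows essentially the same route as the paper: dispatch the first and third terms via the structural results (Lemma~\ref{value function at O (1)} and Theorem~\ref{main theorem value function}), and handle the $H_i^+$ term by running the constant control $a$ with $f_i(O,a)>0$ from nearby interior points $x\in\Gamma_i\setminus\{O\}$, passing to the limit $x\to O$ via Gr{\"o}nwall/Lipschitz continuity, and differentiating against the test function. The one minor difference is the endgame for the degenerate controls $f_i(O,a)=0$: the paper closes the gap between $\sup_{f_i>0}$ and $\max_{f_i\ge 0}$ by invoking Corollary~\ref{corollary: max=00003Dsup} (which uses the convexity assumption $[H3]$), whereas you absorb those controls directly into the already-established $H_O^T$ inequality --- a slightly more elementary move that bypasses $[H3]$ at this step.
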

\begin{proof}[Proof of Lemma~\ref{sup_property 1}]
From Theorem~\ref{main theorem value function},
\[
\lambda v_{i}\left(O\right)+\max\left\{ -\lambda\min_{j\ne i}\left\{ v_{j}\left(O\right)+c_{j}\right\} ,H_{O}^{T}\right\} \le 0.
\]
It is thus sufficient to prove that 
\[
\lambda v_{i}\left(O\right)+H_{i}^{+}\left(O,\dfrac{d v_{i}}{d x_{i}}\left(O\right)\right)\le 0
\]
in the viscosity sense.
Let $a_{i} \in A_i$ be such that $f_{i}\left(O,a_i\right)>0$. Setting $\alpha\left(t\right)\equiv a_{i}$ then $\left(y_{x,\alpha},\alpha\right)\in\mathcal{T}_{x}$
for all $x\in \Gamma_{i}$. Moreover, for all $x\in \Gamma_{i}\backslash\left\{ O\right\} $,
$y_{x,\alpha}\left(t\right)\in \Gamma_{i}\backslash\left\{ O\right\} $
(the trajectory cannot approach $O$ since the speed pushes it away
from $O$ for $y_{x,\alpha}\in \Gamma_{i}\cap B\left(O,r\right)$). Note
that it is not sufficient to choose $a_{i}\in A_{i}$ such that $f\left(O,a_{i}\right)=0$
since it can lead to $f\left(x,a_{i}\right)<0$ for all $x\in \Gamma_{i}\backslash\left\{ O\right\} $.
Next, for $\tau>0$ fixed and any $x\in \Gamma_{i}$, if we choose 
\begin{equation}
\alpha_{x}\left(t\right)=\begin{cases}
\alpha\left(t\right)=a_{i} & 0\le t\le\tau,\\
\hat{a}\left(t-\tau\right) & t\ge\tau,
\end{cases}\label{eq: remain on network}
\end{equation}
 then $y_{x.\alpha_{x}}\left(t\right)\in \Gamma_{i}\backslash\left\{ O\right\} $
for all $t\in\left[0,\tau\right]$. It yields
\[
v_{i}\left(x\right)  \le  J\left(x,\alpha_{x}\right)=\int_{0}^{\tau}\ell_{i}\left(y_{x,\alpha}\left(s\right),a_{i}\right)e^{-\lambda s}ds+e^{-\lambda\tau}J\left(y_{x,\alpha}\left(\tau\right),\widehat{\alpha}\right).
\]
Since this holds for any $\widehat{\alpha}$ ($\alpha_{x}$ is arbitrary
for $t>\tau$), we deduce that
\begin{equation}
v_{i}\left(x\right)\le\int_{0}^{\tau}\ell_{i}\left(y_{x,\alpha_{x}}\left(s\right),a_{i}\right)e^{-\lambda s}ds+e^{-\lambda\tau}v_{i}\left(y_{x,\alpha_{x}}\left(\tau\right)\right).\label{ineq_sub_property 1}
\end{equation}
Since $f_{i}\left(\cdot,a\right)$ is Lipschitz continuous by $\left[H1\right]$, we also have for all $t\in\left[0,\tau\right]$,
\begin{eqnarray*}
\left|y_{x,\alpha_{x}}\left(t\right)-y_{O,\alpha_{O}}\left(t\right)\right| & = & \left|x+\int_{0}^{t}f_{i}\left(y_{x,\alpha}\left(s\right),a_{i}\right)e_{i}ds-\int_{0}^{t}f_{i}\left(y_{O,\alpha}\left(s\right),a_{i}\right)e_{i}ds\right|\\
 & \le & \left|x\right|+L\int_{0}^{t}\left|y_{x,\alpha}\left(s\right)-y_{O,\alpha}\left(s\right)\right|ds,
\end{eqnarray*}
where $\alpha_{0}$ satisfies~\eqref{eq: remain on network} with
$x=O$. According to Gr{\"o}nwall's inequality, 
\[
\left|y_{x,\alpha_{x}}\left(t\right)-y_{O,\alpha_{O}}\left(t\right)\right|\le\left|x\right|e^{Lt},
\]
for $t\in\left[0,\tau\right]$, yielding that $y_{x,\alpha_{x}}\left(t\right)$ tends to $y_{O,\alpha_{O}}\left(t\right)$
when $x$ tends to $O$. Hence, from~\eqref{ineq_sub_property 1},
by letting $x\rightarrow O$, we obtain
\[
v_{i}\left(O\right)\le\int_{0}^{\tau}\ell_{i}\left(y_{O,\alpha_{O}}\left(s\right),a_{i}\right)e^{-\lambda s}ds+e^{-\lambda\tau}v_{i}\left(y_{O,\alpha_{O}}\left(\tau\right)\right).
\]
Let $\varphi$ be a function in $C^{1}\left(\Gamma_{i}\right)$
such that $0=v_{i}\left(O\right)-\varphi\left(O\right)=\max_{\Gamma_{i}}\left(v_{i}-\varphi\right)$.
This yields
\[
\dfrac{\varphi\left(O\right)-\varphi\left(y_{O,\alpha_{O}}\left(\tau\right)\right)}{\tau}\le\dfrac{1}{\tau}\int_{0}^{\tau}\ell_{i}\left(y_{O,\alpha_{O}}\left(s\right),a_{i}\right)e^{-\lambda s}ds+\dfrac{\left(e^{-\lambda\tau}-1\right)v_{i}\left(y_{O,\alpha_{O}}\left(\tau\right)\right)}{\tau}.
\]
By letting $\tau$ tend to $0$, we obtain that
\[
- f_{i}\left(O,a_{i}\right)\dfrac{d\varphi}{d x_{i}}\left(O\right)\le\ell_{i}\left(O,a_{i}\right)-\lambda v_{i}\left(O\right).
\]
Hence,
\[
\lambda v_{i}\left(O\right)+\sup_{a\in A_{i}:f_{i}\left(O,a\right)>0}\left\{ -f_{i}\left(O,a\right)\dfrac{d v_{i}}{d x_{i}}\left(O\right)-\ell_{i}\left(O,a\right)\right\} \le0
\]
in the viscosity sense. Finally, from Corollary~\ref{corollary: max=00003Dsup}
in Appendix, we have
\[
\sup_{a\in A_{i}:f_{i}\left(O,a\right)>0}\left\{ -f_{i}\left(O,a\right)\dfrac{d\varphi_{i}}{d x_{i}}\left(O\right)-\ell_{i}\left(O,a\right)\right\} =\max_{a\in A_{i}:f_{i}\left(O,a\right)\ge0}\left\{ -f_{i}\left(O,a\right)\dfrac{d\varphi_{i}}{d x_{i}}\left(O\right)-\ell_{i}\left(O,a\right)\right\} .
\]
The proof is complete.
\end{proof}
\begin{lem}
\label{super_property 1} If
\begin{equation}
v_{i}\left(O\right)<{\displaystyle \min\left\{ \min_{j\ne i}\left\{ v_{j}\left(O\right)+c_{j}\right\} ,-\dfrac{H_{O}^{T}}{\lambda}\right\} },\label{ineq: min min}
\end{equation}
then there exist $\bar{\tau}>0,r>0$ and $\varepsilon_{0}>0$ such
that for any $x\in\left(\Gamma_{i}\backslash\left\{ O\right\} \right)\cap B\left(O,r\right)$,
any $\varepsilon<\varepsilon_{0}$ and any   $\varepsilon-$optimal
control law $\alpha_{\varepsilon,x}$ for $x$, 
\[
y_{x,\alpha_{\varepsilon,x}}\left(s\right)\in \Gamma_{i}\backslash\left\{ O\right\} ,\quad\mbox{for all }s \in\left[0,\bar{\tau}\right].
\]
\end{lem}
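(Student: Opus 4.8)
The plan is to argue by contradiction: I will show that an $\varepsilon$-optimal trajectory issued from a point $x$ close to $O$ cannot reach the vertex during a fixed initial time interval, because reaching $O$ would force its cost to exceed $\mathsf{v}(x)$ by more than $\varepsilon$. The driving observation is that, under~\eqref{ineq: min min}, being located at $O$ is \emph{strictly} more expensive than sitting just inside $\Gamma_i$.

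The first step is to extract this strict gap. Writing
$\min_{j=\overline{1,N}}\{v_j(O)+c_j\}=\min\{v_i(O)+c_i,\ \min_{j\ne i}\{v_j(O)+c_j\}\}$
and combining~\eqref{ineq: min min} with the positivity of the entry cost $c_i$, each term appearing in the characterization of $\mathsf{v}(O)$ furnished by Theorem~\ref{main theorem value function} is strictly larger than $v_i(O)$: the terms $v_j(O)+c_j$ with $j\ne i$ and the term $-H_O^T/\lambda$ by hypothesis, and the remaining term $v_i(O)+c_i$ because $c_i>0$. Hence $\gamma:=\mathsf{v}(O)-v_i(O)>0$. This is the only place where the positivity of $c_i$ is used, and it is crucial.

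Next I set up the dynamic programming estimate. Fix $x\in(\Gamma_i\setminus\{O\})\cap B(O,r)$ and an $\varepsilon$-optimal control $\alpha:=\alpha_{\varepsilon,x}$, and let $t_0:=\inf\{t\ge0:y_{x,\alpha}(t)=O\}$ be the first hitting time of the vertex; if $y_{x,\alpha}$ never reaches $O$ the conclusion is automatic. Since $O$ is the unique point joining the edges and $y_{x,\alpha}$ is continuous with $y_{x,\alpha}(0)=x\in\Gamma_i\setminus\{O\}$, the trajectory remains in $\Gamma_i\setminus\{O\}$ on $[0,t_0)$ and incurs no entry charge there, so it suffices to bound $t_0$ from below. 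Splitting the cost at $t_0$, using that the restarted trajectory from $O$ is admissible and therefore costs at least $\mathsf{v}(O)$, and bounding the running cost by $M$ (assumption $\left[H2\right]$), I obtain
\[
v_i(x)+\varepsilon\ge J(x,\alpha)\ge -M t_0+e^{-\lambda t_0}\mathsf{v}(O)\ge \mathsf{v}(O)-\big(M+\lambda|\mathsf{v}(O)|\big)t_0.
\]

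I then close the argument with a uniform choice of constants. By continuity of $v_i$ at $O$ (Lemma~\ref{lem: extension value function}), choose $r$ so small that $v_i(x)<v_i(O)+\gamma/4$ on $\Gamma_i\cap B(O,r)$, and set $\varepsilon_0:=\gamma/4$ and $\bar\tau:=\gamma\big/\big(2(M+\lambda|\mathsf{v}(O)|)\big)$. If $t_0\le\bar\tau$, the displayed inequality yields $v_i(x)+\varepsilon\ge\mathsf{v}(O)-\gamma/2$, whereas the choices of $r$ and $\varepsilon_0$ give $v_i(x)+\varepsilon<\mathsf{v}(O)-\gamma/2$ (recall $v_i(O)=\mathsf{v}(O)-\gamma$), a contradiction. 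Hence $t_0>\bar\tau$, that is $y_{x,\alpha}(s)\in\Gamma_i\setminus\{O\}$ for all $s\in[0,\bar\tau]$, with $\bar\tau,r,\varepsilon_0$ depending only on $\gamma$, $M$, $\lambda$ and the modulus of continuity of $v_i$. The step requiring the most care is not any individual estimate but this bookkeeping of uniformity — verifying that the cost truly splits at $t_0$ with no entry cost on $[0,t_0)$ and that the three constants are genuinely independent of the particular $x$ and of the particular near-optimal control $\alpha_{\varepsilon,x}$.
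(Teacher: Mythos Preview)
Your proof is correct and follows essentially the same approach as the paper: both arguments hinge on the observation that, under~\eqref{ineq: min min} together with $c_i>0$ and Theorem~\ref{main theorem value function}, one has $\mathsf{v}(O)>v_i(O)$, and both combine this with the dynamic programming split $v_i(x)+\varepsilon>-Mt_0+e^{-\lambda t_0}\mathsf{v}(O)$ at the first hitting time of $O$. The only cosmetic difference is that the paper negates the conclusion, extracts sequences $x_n\to O$, $\tau_n\to 0$, $\varepsilon_n\to 0$ and passes to the limit to reach the inequality $v_i(O)\ge\mathsf{v}(O)$, whereas you quantify the gap $\gamma=\mathsf{v}(O)-v_i(O)$ at the outset and produce explicit values for $\bar\tau$, $r$, $\varepsilon_0$; your version is therefore slightly more constructive but not a different route.
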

\begin{rem}
Roughly speaking, this lemma takes care of the case $\lambda v_{i}+H_{i}^{+}\left(x,\dfrac{dv_{i}}{dx_{i}}\left(O\right)\right)\le0$, i.e.,
the situation when the trajectory does not leave $\Gamma_{i}$, see introduction.
\end{rem}
\begin{proof}[Proof of Lemma~\ref{super_property 1}]
Suppose by contradiction that there exist sequences $\left\{ \varepsilon_{n}\right\},\left\{ \tau_{n}\right\}\subset\mathbb{R}^{+}$ and $\left\{ x_{n}\right\}\subset \Gamma_{i}\backslash\left\{ O\right\} $
such that $\varepsilon_{n}\searrow0$, $x_{n}\rightarrow O,\tau_{n}\searrow0$
 and a control law $\alpha_{n}$ such that
$\alpha_{n}$ is $\varepsilon_{n}$-optimal control law and $y_{x_{n},\alpha_{n}}\left(\tau_{n}\right)=O$.
This implies that
\begin{equation}
v_{i}\left(x_{n}\right)+\varepsilon_{n}>J\left(x_{n},\alpha_{n}\right)=\int_{0}^{\tau_{n}}\ell\left(y_{x_{n},\alpha_{n}}\left(s\right),\alpha_{n}\left(s\right)\right)e^{-\lambda s}ds+e^{-\lambda\tau_{n}}J\left(O,\alpha_{n}\left(\cdot+\tau_{n}\right)\right).\label{eq: epsilon-optimal control}
\end{equation}
Since $\ell$ is bounded by $M$ by $\left[H1\right]$, then
$
v_{i}\left(x_{n}\right)+\varepsilon_{n} 
 \ge  -\tau_{n}M+e^{-\lambda\tau_{n}}\mathsf{v}\left(O\right).
$
By letting $n$ tend to $\infty$, we obtain
\begin{equation}
v_{i}\left(O\right)\ge \mathsf{v}\left(O\right).\label{ineq: contradiction}
\end{equation}
From~\eqref{ineq: min min}, it follows that
\[
\min\left\{ \min_{j\ne i}\left\{ v_{j}\left(O\right)+c_{j}\right\} ,-\dfrac{H_{O}^{T}}{\lambda}\right\} >\mathsf{v}\left(O\right).
\]
However, $\mathsf{v}\left(O\right)=\min\left\{ {\displaystyle \min_{j}\left\{ v_{j}\left(O\right)+c_{j}\right\} ,-\dfrac{H_{O}^{T}}{\lambda}}\right\} $
by Theorem~\ref{main theorem value function}. Therefore, $\mathsf{v}\left(O\right)=v_{i}\left(O\right)+c_{i}>v_{i}\left(O\right)$,
which is a contradiction with~\eqref{ineq: contradiction}. 
\end{proof}
\begin{lem}
\label{super_property 2}The function $v_{i}$ is a viscosity super-solution of~\eqref{eq: Hamilton Jacobi entry} at $O$.
\end{lem}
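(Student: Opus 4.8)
The plan is to verify the super-solution inequality of~\eqref{eq: Hamilton Jacobi entry} at $O$ directly from the dynamic programming principle, splitting into two cases according to the position of $v_i(O)$ relative to the competing quantities at the vertex. Fix an admissible test-function $\varphi_i\in C^1(\Gamma_i)$ such that $v_i-\varphi_i$ has a local minimum at $O$, and normalise so that $v_i(O)=\varphi_i(O)$; I must show
\[
\lambda v_i(O)+\max\left\{-\lambda\min_{j\ne i}\{v_j(O)+c_j\},\,H_i^+\Big(O,\tfrac{d\varphi_i}{dx_i}(O)\Big),\,H_O^T\right\}\ge0.
\]
Since the maximum dominates each of its three entries, it suffices to exhibit, in each case, one entry whose sum with $\lambda v_i(O)$ is non-negative.

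Consider first the easy case $v_i(O)\ge\min\{\min_{j\ne i}\{v_j(O)+c_j\},-H_O^T/\lambda\}$. If this minimum is attained by the first quantity, then $\lambda v_i(O)-\lambda\min_{j\ne i}\{v_j(O)+c_j\}\ge0$, so the first entry of the maximum already yields the claim; if it is attained by $-H_O^T/\lambda$, then $\lambda v_i(O)+H_O^T\ge0$ and the third entry does. In either situation the super-solution inequality holds with no further work.

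The remaining case is the strict inequality $v_i(O)<\min\{\min_{j\ne i}\{v_j(O)+c_j\},-H_O^T/\lambda\}$, which is precisely the hypothesis of Lemma~\ref{super_property 1}; here the first and third entries are strictly negative, so everything must come from the middle one, and I would prove $\lambda v_i(O)+H_i^+(O,\tfrac{d\varphi_i}{dx_i}(O))\ge0$. By Lemma~\ref{super_property 1} there are $\bar\tau,r,\varepsilon_0>0$ such that, for $x\in(\Gamma_i\setminus\{O\})\cap B(O,r)$ and any $\varepsilon$-optimal control $\alpha_{\varepsilon,x}$ with $\varepsilon<\varepsilon_0$, the trajectory stays in $\Gamma_i\setminus\{O\}$ on $[0,\bar\tau]$. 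Combining $\varepsilon$-optimality with the dynamic programming principle gives, for $0<t\le\bar\tau$,
\[
v_i(x)+\varepsilon\ge\int_0^t\ell_i(y_{x,\alpha}(s),\alpha(s))e^{-\lambda s}\,ds+e^{-\lambda t}v_i(y_{x,\alpha}(t)).
\]
Inserting $v_i\ge\varphi_i$ near $O$, writing $e^{-\lambda t}\varphi_i(y_{x,\alpha}(t))-\varphi_i(x)=\int_0^t e^{-\lambda s}\big[-\lambda\varphi_i+\tfrac{d\varphi_i}{dx_i}f_i\big](y_{x,\alpha}(s),\alpha(s))\,ds$ and rearranging yields $v_i(x)-\varphi_i(x)+\varepsilon\ge\int_0^t e^{-\lambda s}\big[\ell_i+\tfrac{d\varphi_i}{dx_i}f_i-\lambda\varphi_i\big]\,ds$. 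Letting $x\to O$ and $\varepsilon\to0$ (using the Grönwall continuous-dependence estimate from the proof of Lemma~\ref{sup_property 1} and the continuity of $v_i$ from Lemma~\ref{lem: extension value function}), the left side tends to $0$ and the trajectories converge to a limiting near-optimal trajectory $\bar y$ issued from $O$; dividing by $t$ and letting $t\to0$ then produces the pointwise inequality $\lambda v_i(O)\ge\ell_i(O,a_0)+\tfrac{d\varphi_i}{dx_i}(O)\,f_i(O,a_0)$ for the limiting initial control $a_0=\bar\alpha(0)$.

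The main obstacle is to recover the \emph{restricted} Hamiltonian $H_i^+$ rather than the full $H_i$: bounding the integrand crudely by $H_i$ would only give $\lambda v_i(O)+H_i(O,\tfrac{d\varphi_i}{dx_i}(O))\ge0$, which is too weak here. The decisive observation is that $\bar y$ starts at $O$ and remains in $\Gamma_i=\mathbb{R}^+e_i$, so its scalar velocity at $0^+$ is non-negative, forcing $f_i(O,a_0)\ge0$, i.e.\ $a_0\in A_i^+$. Then $-\tfrac{d\varphi_i}{dx_i}(O)f_i(O,a_0)-\ell_i(O,a_0)\le H_i^+(O,\tfrac{d\varphi_i}{dx_i}(O))$, and the previous inequality gives exactly $\lambda v_i(O)+H_i^+(O,\tfrac{d\varphi_i}{dx_i}(O))\ge0$; this is the same mechanism that replaces $\sup_{f_i(O,a)>0}$ by $\max_{f_i(O,a)\ge0}$ in the proof of Lemma~\ref{sup_property 1}. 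Making the limiting-trajectory argument rigorous is where assumption $[H4]$ and the vertex test-function machinery (Lemma~\ref{lem:test function at O}) intervene, together with the bookkeeping $\varepsilon\ll t$ needed to control the error terms; I expect this to carry the technical weight of the proof.
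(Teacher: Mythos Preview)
Your overall plan is right and matches the paper's: the easy case $v_i(O)\ge\min\{\min_{j\ne i}\{v_j(O)+c_j\},-H_O^T/\lambda\}$ is immediate, and in the hard case you correctly invoke Lemma~\ref{super_property 1} to confine $\varepsilon$-optimal trajectories to $\Gamma_i\setminus\{O\}$ on $[0,\bar\tau]$, then exploit the dynamic programming inequality. The decisive observation---that the limiting dynamics at $O$ must be non-negative, forcing the relevant control into $A_i^+$ and hence producing $H_i^+$ rather than $H_i$---is exactly the paper's mechanism.

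The gap is in how you extract that limiting control. You write of ``the limiting initial control $a_0=\bar\alpha(0)$'' of a ``limiting near-optimal trajectory $\bar y$'', but this object is not well-defined: the $\varepsilon$-optimal controls $\alpha_{\varepsilon,x}$ are only measurable and need not converge in any useful sense, and even if the trajectories converge uniformly (Arzel\`a--Ascoli), the limit need not be differentiable at $t=0^+$ or correspond to a single control value there. The Gr\"onwall estimate you cite from Lemma~\ref{sup_property 1} compares two trajectories driven by the \emph{same} control from different initial points; it says nothing here, where the controls themselves vary with $\varepsilon$ and $x$. The tools you point to for the rescue---$[H4]$ and Lemma~\ref{lem:test function at O}---are not the right ones: the latter concerns Lipschitz regularity of \emph{sub}-solutions and plays no role in the super-solution argument.

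What the paper actually uses is assumption $[H3]$, the closed convexity of $\mathrm{FL}_i(O)$. Instead of seeking a limit of controls, one looks at the bounded time-averages
\[
\Bigl(\tfrac{1}{\tau}\int_0^\tau f_i(y_\varepsilon,\alpha_\varepsilon)\,ds,\ \tfrac{1}{\tau}\int_0^\tau \ell_i(y_\varepsilon,\alpha_\varepsilon)\,ds\Bigr),
\]
passes to a subsequential limit $(a,b)$ along $\varepsilon_n\to 0$, $\tau_m\to 0$, and uses $[H3]$ together with $[H1]$--$[H2]$ to conclude $(a,b)\in\mathrm{FL}_i(O)$, i.e.\ $(a,b)=(f_i(O,\bar a)e_i,\ell_i(O,\bar a))$ for some $\bar a\in A_i$. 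The constraint $y_{\varepsilon_n}(\tau_m)\in\Gamma_i\setminus\{O\}$ gives $\tfrac{1}{\tau_m}\int_0^{\tau_m} f_i\,ds\ge -|x_{\varepsilon_n}|/\tau_m$, and sending $\varepsilon_n\to 0$ \emph{before} $\tau_m\to 0$ yields $f_i(O,\bar a)\ge 0$, hence $\bar a\in A_i^+$. This convexity step is where the technical weight actually lies, and it cannot be replaced by a pointwise limiting-control argument.
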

\begin{proof}[Proof of Lemma~\ref{super_property 2}] We adapt the proof of  Oudet~\cite{Oudet2014} and start by assuming
that
\[
v_{i}\left(O\right)<{\displaystyle \min\left\{ \min_{j\ne i}\left\{ v_{j}\left(O\right)+c_{j}\right\} ,-\dfrac{H_{O}^{T}}{\lambda}\right\} }.
\]
We need to prove that
\[
\lambda v_{i}\left(O\right)+H_{i}^{+}\left(O,\dfrac{d v_{i}}{d x_{i}}\left(O\right)\right)\ge0
\]
in the viscosity sense.
Let $\varphi\in C^{1}\left(\Gamma_{i}\right)$ be such that
\begin{equation}
0=v_{i}\left(O\right)-\varphi\left(O\right)\le v_{i}\left(x\right)-\varphi\left(x\right)\quad\mbox{for all } x\in \Gamma_{i},\label{property test function super solution}
\end{equation}
and $\left\{ x_{\varepsilon}\right\} \subset \Gamma_{i}\backslash\left\{ O\right\} $
be any sequence such that $x_{\varepsilon}$ tends to $O$ when $\varepsilon$
tends to $0$. From the dynamic programming principle and Lemma~\ref{super_property 1},
there exists $\bar{\tau}$ such that for any $\varepsilon>0$, there
exists $\left(y_{\varepsilon},\alpha_{\varepsilon}\right):=\left(y_{x_{\varepsilon},\alpha_{\varepsilon}},\alpha_{\varepsilon}\right)\in\mathcal{T}_{x_{\varepsilon}}$
such that $y_{\varepsilon}\left(\tau\right)\in \Gamma_{i}\backslash\left\{ O\right\} $
for any $\tau\in\left[0,\bar{\tau}\right]$ and 
\[
v_{i}\left(x_{\varepsilon}\right)+\varepsilon\ge\int_{0}^{\tau}\ell_{i}\left(y_{\varepsilon}\left(s\right),\alpha_{\varepsilon}\left(s\right)\right)e^{-\lambda s}ds+e^{-\lambda\tau}v_{i}\left(y_{\varepsilon}\left(\tau\right)\right).
\]
Then, according to~\eqref{property test function super solution}
\begin{eqnarray}
v_{i}\left(x_{\varepsilon}\right)-v_{i}\left(O\right)+\varepsilon & \ge & \int_{0}^{\tau}\ell_{i}\left(y_{\varepsilon}\left(s\right),\alpha_{\varepsilon}\left(s\right)\right)e^{-\lambda s}ds+e^{-\lambda\tau}\left[\varphi\left(y_{\varepsilon}\left(\tau\right)\right)-\varphi\left(O\right)\right]\nonumber \\
 &  & -v_{i}\left(O\right)\left(1-e^{-\lambda\tau}\right).\label{ineq: DDP (1)}
\end{eqnarray}
Next,
\[
\begin{cases}
\displaystyle \int_{0}^{\tau}\ell_{i}\left(y_{\varepsilon}\left(s\right),\alpha_{\varepsilon}\left(s\right)\right)e^{-\lambda s}ds=\int_{0}^{\tau}\ell_{i}\left(y_{\varepsilon}\left(s\right),\alpha_{\varepsilon}\left(s\right)\right)ds+o\left(\tau\right),\\
\displaystyle \left[\varphi\left(y_{\varepsilon}\left(\tau\right)\right)-\varphi\left(O\right)\right]e^{-\lambda\tau}=\varphi\left(y_{\varepsilon}\left(\tau\right)\right)-\varphi\left(O\right)+\tau o_{\varepsilon}\left(1\right)+o\left(\tau\right),
\end{cases}
\]
and 
\[
\begin{cases}
\displaystyle
v_{i}\left(x_{\varepsilon}\right)-v_{i}\left(O\right) & =o_{\varepsilon}\left(1\right),\\
\displaystyle v_{i}\left(O\right)\left(1-e^{-\lambda\tau}\right) & =o\left(\tau\right)+\tau\lambda v_{i}\left(O\right),
\end{cases}
\]
where the notation $o_{\varepsilon}\left(1\right)$ is used for a quantity
which is independent on $\tau$ and tends to $0$ as
$\varepsilon$ tends to $0$. For $k\in\mathbb{N}^{\star}$ the
notation $o(\tau^{k})$
is used for a quantity that is independent on $\varepsilon$ and such that
$\dfrac{o(\tau^{k})}{\tau^{k}}\rightarrow0$ as $\tau\rightarrow0$.
Finally, $\mathcal{O}(\tau^k)$ stands for a quantity independent on $\varepsilon$
 such that $\dfrac{\mathcal{O}(\tau^{k})}{\tau^{k}}$ 
 remains bounded as $\tau\rightarrow0$. 
From~\eqref{ineq: DDP (1)}, we obtain that
\begin{equation}
\tau\lambda v_{i}\left(O\right)\ge\int_{0}^{\tau}\ell_{i}\left(y_{\varepsilon}\left(s\right),\alpha_{\varepsilon}\left(s\right)\right)ds+\varphi\left(y_{\varepsilon}\left(\tau\right)\right)-\varphi\left(O\right)+\tau o_{\varepsilon}\left(1\right)+o\left(\tau\right)+o_{\varepsilon}\left(1\right).\label{ineq: DPP (2)}
\end{equation}
Since $y_{\varepsilon}\left(\tau\right)\in \Gamma_{i}$ for all $\varepsilon$,
one has
\[
\varphi\left(y_{\varepsilon}\left(\tau\right)\right)-\varphi\left(x_{\varepsilon}\right)=\int_{0}^{\tau}\dfrac{d\varphi}{d x_{i}}\left(y_{\varepsilon}\left(s\right)\right)\dot{y}_{\varepsilon}\left(s\right)ds=\int_{0}^{\tau}\dfrac{d\varphi}{d x_{i}}\left(y_{\varepsilon}\left(s\right)\right)f_{i}\left(y_{\varepsilon}\left(s\right),\alpha_{\varepsilon}\left(s\right)\right)ds.
\]
Hence, from~\eqref{ineq: DPP (2)}
\begin{equation}
\begin{array}{ccc}
\tau\lambda v_{i}\left(O\right)-{\displaystyle \int_{0}^{\tau}}\left[\ell_{i}\left(y_{\varepsilon}\left(s\right),\alpha_{\varepsilon}\left(s\right)\right)+\dfrac{d\varphi}{d x_{i}}\left(y_{\varepsilon}\left(s\right)\right)f_{i}\left(y_{\varepsilon}\left(s\right),\alpha_{\varepsilon}\left(s\right)\right)\right]ds & \ge & \tau o_{\varepsilon}\left(1\right)+o\left(\tau\right)+o_{\varepsilon}\left(1\right).\end{array}\label{ineq: DPP (3)}
\end{equation}
Moreover,
$\varphi\left(x_{\varepsilon}\right)-\varphi\left(O\right)=o_{\varepsilon}\left(1\right)$
and that $\dfrac{d\varphi}{d x_{i}}\left(y_{\varepsilon}\left(s\right)\right)=\dfrac{d\varphi}{d x_{i}}\left(O\right)+o_{\varepsilon}\left(1\right)+\mathcal{O}\left(s\right)$.
Thus 
\begin{equation}
\begin{array}{ccc}
\lambda v_{i}\left(O\right)-\dfrac{1}{\tau}{\displaystyle \int_{0}^{\tau}}\left[\ell_{i}\left(y_{\varepsilon}\left(s\right),\alpha_{\varepsilon}\left(s\right)\right)+\dfrac{d\varphi}{d x_{i}}\left(O\right)f_{i}\left(y_{\varepsilon}\left(s\right),\alpha_{\varepsilon}\left(s\right)\right)\right]ds & \ge & o_{\varepsilon}\left(1\right)+\dfrac{o\left(\tau\right)}{\tau}+\dfrac{o_{\varepsilon}\left(1\right)}{\tau}.\end{array}\label{ineq: DPP (4)}
\end{equation}

Let $\varepsilon_{n}\rightarrow0$ as $n\rightarrow\infty$ and $\tau_{m}\rightarrow0$
as $m\rightarrow\infty$  such that 
\[
{\displaystyle \left(a_{mn},b_{mn}\right):=\left(\dfrac{1}{\tau_{m}}\int_{0}^{\tau_{m}}f_{i}\left(y_{\varepsilon_{n}}\left(s\right),\alpha_{\varepsilon_{n}}\left(s\right)\right)e_{i}ds,\dfrac{1}{\tau_{m}}\int_{0}^{\tau_{m}}\ell_{i}\left(y_{\varepsilon_{n}}\left(s\right),\alpha_{\varepsilon_{n}}\left(s\right)\right)ds\right)}\longrightarrow\left(a,b\right)\in\mathbb{R}e_{i}\times\mathbb{R}
\]
as $n,m\rightarrow\infty$. By $\left[H1\right]$ and $\left[H2\right]$
\[
\begin{cases}
f_{i}\left(y_{\varepsilon_{n}}\left(s\right),\alpha_{\varepsilon_{n}}\left(s\right)\right)e_{i} & =f_{i}\left(O,\alpha_{\varepsilon_{n}}\left(s\right)\right)+L\left|y_{\varepsilon_{n}}\left(s\right)\right|=f_{i}\left(O,\alpha_{\varepsilon_{n}}\left(s\right)\right)e_{i}+o_{n}\left(1\right)+o_{m}\left(1\right),\\
\ell_{i}\left(y_{\varepsilon_{n}}\left(s\right),\alpha_{\varepsilon_{n}}\left(s\right)\right)e_{i} & =\ell_{i}\left(O,\alpha_{\varepsilon_{n}}\left(s\right)\right)+\omega\left(\left|y_{\varepsilon_{n}}\left(s\right)\right|\right)=\ell_{i}\left(O,\alpha_{\varepsilon_{n}}\left(s\right)\right)e_{i}+o_{n}\left(1\right)+o_{m}\left(1\right).
\end{cases}
\]
It follows that
\begin{align*}
\left(a_{mn},b_{mn}\right) & ={\displaystyle \left(\dfrac{1}{\tau_{m}}\int_{0}^{\tau_{m}}f_{i}\left(O,\alpha_{\varepsilon_{n}}\left(s\right)\right)e_{i}ds,\dfrac{1}{\tau_{m}}\int_{0}^{\tau_{m}}\ell_{i}\left(O,\alpha_{\varepsilon_{n}}\left(s\right)\right)ds\right)+o_{n}\left(1\right)+o_{m}\left(1\right)}\\
 & \in\text{FL}_{i}\left(O\right)+o_{n}\left(1\right)+o_{m}\left(1\right),
\end{align*}
since $\text{FL}_{i}\left(O\right)$ is closed and convex.
Sending
$n,m\rightarrow\infty$, we obtain $\left(a,b\right)\in\text{FL}_{i}\left(O\right)$
so there exists $\overline{a}\in A_{i}$ such that
\begin{equation}
\lim_{m,n\rightarrow\infty}\left(\dfrac{1}{\tau_{m}}\int_{0}^{\tau_{m}}f_{i}\left(y_{\varepsilon_{n}}\left(s\right),\alpha_{\varepsilon_{n}}\left(s\right)\right)e_{i}ds,\dfrac{1}{\tau_{m}}\int_{0}^{\tau_{m}}\ell_{i}\left(y_{\varepsilon_{n}}\left(s\right),\alpha_{\varepsilon_{n}}\left(s\right)\right)ds\right)=\left(f_{i}\left(O,\overline{a}\right)e_{i},\ell_{i}\left(O,\overline{a}\right)\right).\label{eq: DPP(5)}
\end{equation}
On the other hand, from Lemma~\ref{super_property 1}, $y_{\varepsilon_{n}}\left(s\right)\in \Gamma_{i}\backslash\left\{ O\right\} $ for all $s\in\left[0,\tau_{m}\right]$. This yields
\[
y_{\varepsilon_{n}}\left(\tau_{m}\right)=\left[\int_{0}^{\tau_{n}}f_{i}\left(y_{\varepsilon_{n}}\left(s\right),\alpha_{\varepsilon_{n}}\left(s\right)\right)ds\right]e_{i}+x_{\varepsilon_{n}}.
\]
Since $\left|y_{\varepsilon_{n}}\left(\tau_{m}\right)\right|>0$, then
\[
\dfrac{1}{\tau_{m}}\int_{0}^{\tau_{m}}f_{i}\left(y_{\varepsilon_{n}}\left(s\right),\alpha_{\varepsilon_{n}}\left(s\right)\right)ds\ge-\dfrac{\left|x_{\varepsilon_{n}}\right|}{\tau_{m}}.
\]
Let $\varepsilon_{n}$ tend to $0$, then let $\tau_{m}$ tend to
$0$, one gets $f_{i}\left(O,\overline{a}\right)\ge0$, so $\overline{a}\in A_{i}^{+}$.
Hence, from~\eqref{ineq: DPP (4)} and~\eqref{eq: DPP(5)}, replacing
$\varepsilon$ by $\varepsilon_{n}$ and $\tau$ by $\tau_{m}$, let
$\varepsilon_{n}$ tend to $0$, then let $\tau_{m}$ tend to $0$,
we finally obtain
\[
\lambda v_{i}\left(O\right)+\max_{a\in A_{i}^{+}}\left\{ -f_{i}\left(O,a\right)\dfrac{d\varphi}{d x_{i}}\left(O\right)-\ell_{i}\left(O,a\right)\right\} \ge\lambda v_{i}\left(O\right)+\left[-f_{i}\left(O,\overline{a}\right)\dfrac{d\varphi}{d x_{i}}\left(O\right)-\ell_{i}\left(O,\overline{a}\right)\right]\ge0.
\]
\end{proof}

\section{Comparison Principle and Uniqueness}
\label{sec:comp-princ-uniq}
Inspired by~\cite{BBC2013,BBC2014},
we begin by proving some properties of sub and super
viscosity solutions of~\eqref{eq: Hamilton Jacobi entry}. The 
following three lemmas are reminiscent of Lemma 3.4, Theorem 3.1 and Lemma
3.5 in~\cite{AOT2015}.
\begin{lem}
\label{lem: Super optimality 1} Let $w=\left(w_{1},\ldots,w_{N}\right)$
be a viscosity super-solution of~\eqref{eq: Hamilton Jacobi entry}.
Let $x\in \Gamma_{i}\backslash\left\{ O\right\} $ and assume that
\begin{eqnarray}
w_{i}\left(O\right) & < & \min\left\{ \min_{j\ne i}\left\{ w_{j}\left(O\right)+c_{j}\right\} ,-\dfrac{H_{O}^{T}}{\lambda}\right\} .\label{ineq: CP DPP_enter}
\end{eqnarray}
Then for all $t>0$,
\begin{eqnarray*}
w_{i}\left(x\right) & \ge & \inf_{\alpha_{i}\left(\cdot\right),\theta_{i}}\left(\int_{0}^{t\wedge\theta_{i}}\ell_{i}\left(y_{x}^{i}\left(s\right),\alpha_{i}\left(s\right)\right)e^{-\lambda s}ds+w_{i}\left(y_{x}^{i}\left(t\wedge\theta_{i}\right)\right)e^{-\lambda\left(t\wedge\theta_{i}\right)}\right),
\end{eqnarray*}
where $\alpha_{i}\in L^{\infty}\left(0,\infty;A_{i}\right)$, $y_{x}^{i}$
is the solution of $y_{x}^{i}\left(t\right)=x+\left[\int_{0}^{t}f_{i}\left(y_{x}^{i}\left(s\right),\alpha_{i}\left(s\right)\right)ds\right]e_{i}$
and $\theta_{i}$ satisfies $y_{x}^{i}\left(\theta_{i}\right)=0$
and $\theta_{i}$ lies in $\left[\tau_{i},\overline{\tau_{i}}\right]$,
where $\tau_{i}$ is the exit time of $y_{x}^{i}$ from $\Gamma_{i}\backslash\left\{ O\right\} $
and $\overline{\tau_{i}}$ is the exit time of $y_{x}^{i}$ from $\Gamma_{i}$.
\end{lem}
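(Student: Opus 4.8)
The plan is to use the strict inequality~\eqref{ineq: CP DPP_enter} to collapse the vertex relation onto the single inflow Hamiltonian $H_i^+$, and then to prove the super-optimality bound by a regularization-and-selection argument along trajectories confined to $\Gamma_i$.

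First I would read off what the vertex condition gives. If $\varphi_i\in C^1(\Gamma_i)$ touches $w_i$ from below at $O$, the super-solution property of $w$ at $O$ reads
\[
\lambda w_i(O)+\max\left\{-\lambda\min_{j\ne i}\{w_j(O)+c_j\},\,H_i^+\!\left(O,\tfrac{d\varphi_i}{dx_i}(O)\right),\,H_O^T\right\}\ge 0 .
\]
By~\eqref{ineq: CP DPP_enter} we have $\lambda w_i(O)-\lambda\min_{j\ne i}\{w_j(O)+c_j\}<0$ and $\lambda w_i(O)+H_O^T<0$, so the maximum can only be realized by the middle entry; hence $\lambda w_i(O)+H_i^+(O,\tfrac{d\varphi_i}{dx_i}(O))\ge 0$. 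Together with the interior equation, this shows that $w_i$ is, on the whole closed edge $\Gamma_i$ (vertex included), a viscosity super-solution of the Bellman equation whose Hamiltonian is $H_i$ on $\Gamma_i\setminus\{O\}$ and $H_i^+$ at $O$. This is exactly the equation of the control problem in which the state is forced to remain in $\Gamma_i$ and the admissible velocities at $O$ are those produced by $A_i^+$, i.e.\ the problem on the right-hand side of the statement.

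Next I would establish the super-optimality inequality for this edge problem. Since a super-solution is only lsc, the main device is to replace $w_i$ by its inf-convolutions $w_i^\delta(z)=\inf_{\zeta\in\Gamma_i}\{w_i(\zeta)+d(z,\zeta)^2/(2\delta)\}$ (finite, since $w_i$ is bounded), which are semiconcave, hence locally Lipschitz and twice differentiable a.e., satisfy $w_i^\delta\nearrow w_i$, and remain super-solutions of the edge equation up to an error $o_\delta(1)$ at points at distance $\gtrsim\sqrt\delta$ from $O$. For fixed $x$ and $t$ I would pick a measurable control $\bar\alpha(s)$ realizing the maximum defining $H_i$ at $(y(s),\tfrac{dw_i^\delta}{dx_i}(y(s)))$ (taken in $A_i^+$ once the state sits at $O$), let $y=y_x^i$ be the induced trajectory and $\theta_i\in[\tau_i,\overline{\tau_i}]$ a time with $y(\theta_i)=O$. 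Writing $T=t\wedge\theta_i$, the map $s\mapsto w_i^\delta(y(s))e^{-\lambda s}$ is Lipschitz with a.e.\ derivative $e^{-\lambda s}\big(\tfrac{dw_i^\delta}{dx_i}(y)f_i(y,\bar\alpha)-\lambda w_i^\delta(y)\big)$; inserting the definition of $\bar\alpha$ and the inequality $\lambda w_i^\delta+H_i\ge -o_\delta(1)$ gives
\[
\frac{d}{ds}\big(w_i^\delta(y(s))e^{-\lambda s}\big)\le e^{-\lambda s}\big(o_\delta(1)-\ell_i(y(s),\bar\alpha(s))\big).
\]
Integrating on $[0,T]$ and rearranging yields
\[
w_i^\delta(x)\ge\int_0^{T}\ell_i(y(s),\bar\alpha(s))e^{-\lambda s}\,ds+w_i^\delta(y(T))e^{-\lambda T}-\frac{1}{\lambda}o_\delta(1),
\]
so $w_i^\delta(x)+\tfrac1\lambda o_\delta(1)$ dominates the infimum on the right-hand side computed with $w_i^\delta$. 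Letting $\delta\to0$ and using $w_i^\delta\nearrow w_i$ together with the convergence of the corresponding infima gives the claim.

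The delicate point will be the vertex $O$. The regularization $w_i^\delta$ is only a controlled super-solution away from $O$, whereas the trajectory is allowed to reach, and possibly dwell at, $O$; the reduction of the second paragraph is precisely what makes the correct inequality (with $H_i^+$) available there, and the strong controllability $[H4]$ is what allows one both to keep $y$ inside $\Gamma_i$ using controls of $A_i^+$ and to select the dwelling time $\theta_i$ in $[\tau_i,\overline{\tau_i}]$ with $y(\theta_i)=O$. A routine secondary issue is the a.e.\ chain rule along the Lipschitz trajectory, namely that $y$ spends no time on the measure-zero set where $w_i^\delta$ fails to be differentiable; this follows from the boundedness of $f_i$ in $[H1]$ and from $[H4]$.
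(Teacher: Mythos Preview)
Your opening reduction matches the paper exactly: under~\eqref{ineq: CP DPP_enter} the switching and tangential entries of the vertex Hamiltonian are strictly negative, so the super-solution inequality at $O$ forces $\lambda w_i(O)+H_i^+\!\big(O,\tfrac{d\varphi_i}{dx_i}(O)\big)\ge 0$, and $w_i$ becomes a viscosity super-solution of the state-constrained edge problem~\eqref{eq:H-J equation with H+}. At that point the paper does not argue further: it invokes \cite[Lemma~3.4]{AOT2015}, refers to \cite{BBC2013} for the detailed proof, and notes that the key ingredient there is Blanc's theory \cite{Blanc1997,Blanc2001} of minimal super-solutions for exit-time control problems. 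That machinery is precisely what manufactures the auxiliary time $\theta_i\in[\tau_i,\overline{\tau_i}]$ appearing in the statement.

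Your inf-convolution route is a different approach and is in the right spirit, but the sketch has genuine gaps. The construction of $\bar\alpha$ is circular: you select $\bar\alpha(s)$ as a maximizer at $\big(y(s),Dw_i^\delta(y(s))\big)$ and then declare $y$ to be the trajectory driven by $\bar\alpha$; turning this into an actual admissible pair requires either a differential-inclusion/Filippov argument or a time-discretization with a subsequent limit, and semiconcavity of $w_i^\delta$ alone does not yield a well-posed feedback ODE. The passage $\delta\to 0$ is also not free, since the control, trajectory and stopping time $\theta_i$ all depend on $\delta$ while $w_i$ is only lower semicontinuous; one needs compactness of trajectories (via $[H3]$) to identify a limiting pair realizing the infimum. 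Finally, the assertion that $y$ spends no time on the non-differentiability set of $w_i^\delta$ does not follow from $[H1]$ and $[H4]$; the correct statement is that the composition of a semiconcave function with a Lipschitz curve is absolutely continuous with the expected chain-rule derivative a.e., which is a separate classical fact. These are exactly the technicalities that the Blanc/BBC framework cited by the paper is designed to absorb.
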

\begin{proof}[Proof of Lemma~\ref{lem: Super optimality 1}]
According to~\eqref{ineq: CP DPP_enter}, the function $w_{i}$ is a
viscosity super-solution of the following problem in $\Gamma_{i}$
\begin{equation}
\begin{cases}
\lambda w_{i}\left(x\right)+H_{i}\left(x,\dfrac{d w_{i}}{d x_{i}}\left(x\right)\right) & =0\quad\mbox{if }x\in \Gamma_{i}\backslash\left\{ O\right\} ,\\
\lambda w_{i}\left(O\right)+H_{i}^{+}\left(O,\dfrac{d w_{i}}{dx_{i}}\left(O\right)\right) & =0\quad\mbox{if }x=O.
\end{cases}\label{eq:H-J equation with H+}
\end{equation}
 Hence, we can apply the result  in~\cite[Lemma 3.4]{AOT2015}. We refer to~\cite{BBC2013} for a detailed proof. The main point of that
proof uses the results of Blanc~\cite{Blanc1997,Blanc2001} on minimal
super-solutions of exit time control problems.
\end{proof}
\begin{lem}[Super-optimality]
\label{lem: Super optimality 2} Under assumption $\left[H\right]$,
let $w=\left(w_{1},\ldots,w_{N}\right)$ be a viscosity super-solution
of~\eqref{eq: Hamilton Jacobi entry} that satisfies~\eqref{ineq: CP DPP_enter};
then there exists a sequence $\left\{ \eta_{k}\right\} _{k\in\mathbb{N}}$
of strictly positive real numbers such that $\lim_{k\rightarrow\infty}\eta_{k}=\eta>0$
and a sequence $x_{k}\in \Gamma_{i}\backslash\left\{ O\right\} $ such
that $\lim_{k\rightarrow\infty}x_{k}=O,\lim_{k\rightarrow\infty}w_{i}\left(x_{k}\right)=w_{i}\left(O\right)$
and for each $k$, there exists a control law $\alpha_{i}^{k}$ such
that the corresponding trajectory $y_{x_{k}}\left(s\right)\in \Gamma_{i}$
for all $s\in\left[0,\eta_{k}\right]$ and
\[
w_{i}\left(x_{k}\right)\ge\int_{0}^{\eta_{k}}\ell_{i}\left(y_{x_{k}}\left(s\right),\alpha_{i}^{k}\left(s\right)\right)e^{-\lambda s}ds+w_{i}\left(y_{x_{k}}\left(\eta_{k}\right)\right)e^{-\lambda\eta_{k}}.
\]
\end{lem}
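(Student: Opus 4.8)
The plan is to feed the super-optimality inequality of Lemma~\ref{lem: Super optimality 1} into a selection-and-limit argument, using the strict inequality~\eqref{ineq: CP DPP_enter} at two places: once to produce the sequence $x_k$, and once — decisively — to keep the extracted trajectories on $\Gamma_i$ for a uniformly positive time. As a preliminary reduction, note that under~\eqref{ineq: CP DPP_enter} the two competing entries $-\lambda\min_{j\ne i}\{w_j(O)+c_j\}$ and $H_O^T$ in the max at $O$ are strictly dominated, so $w_i$ is a super-solution of the reduced problem~\eqref{eq:H-J equation with H+}; this is exactly the hypothesis of Lemma~\ref{lem: Super optimality 1}, which I would therefore be free to invoke.

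First I would produce a sequence $x_k\to O$ in $\Gamma_i\setminus\{O\}$ with $w_i(x_k)\to w_i(O)$. Lower semicontinuity gives $\liminf_{\Gamma_i\setminus\{O\}\ni x\to O}w_i(x)\ge w_i(O)$, so it suffices to exclude a strict downward jump $m:=\liminf_{x\to O}w_i(x)>w_i(O)$. If such a jump occurred, then for \emph{every} $p>0$ the affine map $\varphi(s\,e_i)=w_i(O)+ps$ makes $w_i-\varphi$ have a local minimum at $O$, so the super-solution inequality at $O$ holds with slope $p$. Letting $p\to+\infty$ and using $[H4]$ (so that controls with $f_i(O,a)=0$ exist) one checks $H_i^+(O,p)\to\max_{a\in A_i^O}\{-\ell_i(O,a)\}\le H_O^T$; combined with~\eqref{ineq: CP DPP_enter}, which places all three entries of the max strictly below $-\lambda w_i(O)$, this contradicts the inequality. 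Hence $\liminf=w_i(O)$ and the sequence $\{x_k\}$ exists.

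Next, for each $k$ I would apply Lemma~\ref{lem: Super optimality 1} at $x_k$ with a fixed horizon $t=\eta$. Because the stopping time there always satisfies $\theta_i\le\overline{\tau_i}$, the associated trajectory automatically remains in $\Gamma_i$ on $[0,\eta\wedge\theta_i]$; moreover, the convexity assumption $[H3]$ (which is imposed precisely to avoid relaxed controls) makes the set of admissible controlled trajectories closed, so the infimum in Lemma~\ref{lem: Super optimality 1} is attained and the auxiliary $\varepsilon$ can be dispensed with. Choosing such an optimal pair $(\alpha_i^k,\theta_k)$ and setting $\eta_k:=\eta\wedge\theta_k$ already yields the claimed inequality with $y_{x_k}(s)\in\Gamma_i$ on $[0,\eta_k]$.

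The hard part will be the uniform lower bound $\eta_k\to\eta>0$, i.e. showing the optimal trajectory cannot collapse onto $O$ in vanishing time — the naive estimate $w_i(x_k)\ge -M\theta_k+w_i(O)e^{-\lambda\theta_k}$ is consistent with $\theta_k\to 0$ and gives nothing. Here I would exploit~\eqref{ineq: CP DPP_enter} through the super-solution property of $w_i$ at $O$: since $w_i(O)$ is itself super-optimal among trajectories issued from $O$ that stay in $\Gamma_i$ (such trajectories exist by the strong controllability $[H4]$, which supplies controls in $A_i^+$ pushing away from $O$), any trajectory that reaches $O$ at some $\theta<\eta$ and ``stops'' there can be continued inside $\Gamma_i$ without increasing the cost. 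A concatenation argument then shows the $\Gamma_i$-constrained cost over $[0,\eta]$ is no larger than $\int_0^{\theta}\ell_i e^{-\lambda s}\,ds+w_i(O)e^{-\lambda\theta}$, so one may always take $\theta_k\ge\eta$, whence $\eta_k\to\eta>0$ (indeed one may take $\eta_k\equiv\eta$). A final passage to the limit, using the boundedness in $[H1]$--$[H2]$ and the lower semicontinuity of $w_i$, closes the argument. The one delicate point is making the ``continue inside $\Gamma_i$ from $O$'' step rigorous, since it is essentially the conclusion of the present lemma localized at $O$; I would derive it directly from the $H_i^+$ super-solution inequality at $O$ together with $[H4]$, rather than from the lemma itself, to avoid circularity.
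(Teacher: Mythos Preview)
The paper's own proof is literally two lines: it extracts from~\eqref{ineq: CP DPP_enter} the single inequality $\lambda w_i(O)+H_O^T<0$ and then defers the entire construction of the sequences $\{x_k\},\{\eta_k\},\{\alpha_i^k\}$ to \cite[Theorem~3.1]{AOT2015}. So your proposal is already far more detailed than what the paper gives, and your Step~1 (producing $x_k\to O$ with $w_i(x_k)\to w_i(O)$ via the large-slope test function and the limit $H_i^+(O,p)\to\max_{a\in A_i^O}\{-\ell_i(O,a)\}\le H_O^T$) is correct and nicely argued.

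The gap is in Step~3, and it is exactly the point you flag as ``delicate''. Your concatenation scheme needs the following: given a trajectory that reaches $O$ at some $\theta<\eta$, one can continue it inside $\Gamma_i$ on $[\theta,\eta]$ so that the augmented cost
\[
\int_0^\theta \ell_i\,e^{-\lambda s}\,ds + e^{-\lambda\theta}\Bigl(\int_0^{\eta-\theta}\ell_i(y_O(s),\cdot)e^{-\lambda s}\,ds + w_i(y_O(\eta-\theta))e^{-\lambda(\eta-\theta)}\Bigr)
\]
does not exceed $\int_0^\theta \ell_i\,e^{-\lambda s}\,ds + w_i(O)e^{-\lambda\theta}$. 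This requires a continuation $y_O$ with
\[
\int_0^{\eta-\theta}\ell_i(y_O(s),\cdot)e^{-\lambda s}\,ds + w_i(y_O(\eta-\theta))e^{-\lambda(\eta-\theta)} \le w_i(O),
\]
i.e.\ precisely the $\Gamma_i$-constrained super-optimality of $w_i$ \emph{at the vertex} $O$. That is the content of the black box \cite[Theorem~3.1]{AOT2015} (built on the exit-time results of Blanc), and your proposal to ``derive it directly from the $H_i^+$ super-solution inequality at $O$'' is not an elementary step you can sketch in passing---it is the whole theorem. Note in particular that the obvious continuation, namely parking at $O$ with a control $a^\star\in A_i^O$, goes the wrong way: the increment it produces equals
\[
\bigl(e^{-\lambda\theta}-e^{-\lambda\eta}\bigr)\Bigl(\tfrac{\ell_i(O,a^\star)}{\lambda}-w_i(O)\Bigr),
\]
and since $\ell_i(O,a^\star)\ge -H_O^T>\lambda w_i(O)$ by~\eqref{ineq: CP DPP_enter}, this is strictly positive. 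So the continuation you need must genuinely push into $\Gamma_i\setminus\{O\}$ and come with a quantitative bound, which is exactly the nontrivial state-constraint estimate the paper outsources. In short: your outline reproduces the architecture of \cite[Theorem~3.1]{AOT2015} and correctly identifies where the difficulty lies, but it does not circumvent that difficulty---it re-encounters the same black box at the same place.
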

\begin{proof}[Proof of Lemma~\ref{lem: Super optimality 2}]
According to~\eqref{ineq: CP DPP_enter} $\widehat{w}_{i}\left(O\right)<-\dfrac{H_{O}^{T}}{\lambda}$.
Hence, this proof is complete by applying the proof of 
in~\cite[Theorem 3.1]{AOT2015}.
\end{proof}
\begin{lem}\label{lem:test function at O}
Under assumption $\left[H\right]$, let $u=\left(u_{1},\ldots,u_{N}\right)$
be a viscosity sub-solution of~\eqref{eq: Hamilton Jacobi entry}.
Then $u_{i}$ is Lipschitz continuous in $B\left(O,r\right)\cap \Gamma_{i}$. 
Therefore, there exists
a test function $\varphi_{i}\in C^{1}\left({\Gamma_{i}}\right)$
which touches $u_{i}$ from above at $O$.
\end{lem}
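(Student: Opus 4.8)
The plan is to establish the Lipschitz regularity first on the open edge $\Gamma_i\setminus\{O\}$, then to control the value at $O$, after which the test function is produced by an explicit linear construction. The starting point is the \emph{coercivity} of $H_i$ near $O$: by $[H4]$ there are controls realizing the speeds $\pm\delta$ at $O$, and by the Lipschitz bound $[H1]$ on $f_i$ these speeds persist (up to $\delta/2$) on $B(O,r_0)\cap\Gamma_i$, exactly as in the proof of Lemma~\ref{Control t at x near O}. Choosing in $H_i(x,p)=\max_{a\in A_i}\{-pf_i(x,a)-\ell_i(x,a)\}$ a control whose speed has sign opposite to $p$ then yields
\[
H_i(x,p)\ge \tfrac{\delta}{2}\,|p|-M\qquad\text{for all }x\in B(O,r_0)\cap\Gamma_i,\ p\in\R .
\]

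First I would prove Lipschitz continuity on the punctured edge. On $\Gamma_i\setminus\{O\}$ the function $u_i$ is a viscosity sub-solution of $\lambda u_i+H_i(x,du_i/dx)=0$, so for any $C^1$ test function touching $u_i$ from above at an interior point one has $H_i(x,\varphi_i')\le-\lambda u_i(x)$. Since $u_i$ is bounded (as is standard for the sub-solutions under consideration), the right-hand side is bounded by some $K$, and the coercivity estimate forces $|\varphi_i'|\le \tfrac{2}{\delta}(K+M)=:C$. Thus $u_i$ is a viscosity sub-solution of $|du_i/dx|-C\le0$ on $\Gamma_i\setminus\{O\}$, which is the standard characterization of a $C$-Lipschitz function; hence $u_i$ is $C$-Lipschitz on $(\Gamma_i\setminus\{O\})\cap B(O,r)$ for some $r\le r_0$. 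Being uniformly continuous there, $u_i$ admits a limit $\ell_0:=\lim_{\Gamma_i\setminus\{O\}\ni x\to O}u_i(x)$, and upper semicontinuity gives $u_i(O)\ge\ell_0$.

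The heart of the argument is to rule out a strict upward jump, i.e. to show $u_i(O)\le\ell_0$; this is where I expect the main difficulty. Suppose for contradiction $u_i(O)>\ell_0$. Then for \emph{every} slope $p\in\R$ the linear function $\varphi_i(x)=u_i(O)+p\,|x|$ (completed arbitrarily on the other edges into an element of $\mathcal{R}(\mathcal{G})$) satisfies $u_i-\varphi_i\le0=u_i(O)-\varphi_i(O)$ on a small neighborhood of $O$ in $\Gamma_i$: indeed $u_i(x)\le \ell_0+\tfrac12(u_i(O)-\ell_0)$ for $x$ close to $O$, while $\varphi_i(x)\ge u_i(O)-|p|\,|x|$ stays above this bound once $|x|$ is small enough. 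Hence $u_i-\varphi_i$ has a local maximum at $O$, and the sub-solution condition at $O$ yields in particular $\lambda u_i(O)+H_i^+(O,p)\le0$ for every $p\in\R$. But choosing in $H_i^+(O,p)=\max_{a\in A_i^+}\{-pf_i(O,a)-\ell_i(O,a)\}$ the control with $f_i(O,a)=\delta>0$ (which lies in $A_i^+$) gives $H_i^+(O,p)\ge -p\delta-M\to+\infty$ as $p\to-\infty$, contradicting the uniform bound $H_i^+(O,p)\le-\lambda u_i(O)$. Therefore $u_i(O)=\ell_0$, so $u_i$ is continuous at $O$ and, together with the edge estimate, $C$-Lipschitz on all of $B(O,r)\cap\Gamma_i$.

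Finally, Lipschitz continuity makes the test function immediate: since $u_i(x)-u_i(O)\le C|x|$ on $B(O,r)\cap\Gamma_i$, the map $\varphi_i(x)=u_i(O)+C|x|$ belongs to $C^1(\Gamma_i)$ (note $|x|$ is the smooth arc-length coordinate on the half-line), satisfies $\varphi_i(O)=u_i(O)$ and $\varphi_i\ge u_i$ near $O$, hence touches $u_i$ from above at $O$. The only genuinely delicate point is the third paragraph: the freedom to send the test-function slope to $-\infty$ is available \emph{precisely because of} the hypothetical jump, and it is the one-sided coercivity of $H_i^+$ that converts this freedom into the desired contradiction.
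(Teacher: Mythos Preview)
Your argument is correct and is precisely the content that the paper's one-line proof defers to: the paper observes that $u_i$ is a sub-solution of the state-constrained problem~\eqref{eq:H-J equation with H+} on $\Gamma_i$ and then invokes \cite[Lemma~3.2]{AOT2015} (itself based on Ishii's coercivity argument), whose proof unfolds exactly as you wrote it --- coercivity of $H_i$ gives the interior Lipschitz bound, and the one-sided coercivity of $H_i^+$ rules out an upward jump of $u_i$ at $O$ by allowing arbitrarily negative test slopes. The only point worth flagging is that the bound $-\lambda u_i(x)\le K$ requires $u_i$ to be bounded below; this is not part of the lemma's hypotheses as stated but is assumed everywhere the lemma is used (Theorem~\ref{thm: Comparison Principle} treats \emph{bounded} sub-solutions), and the paper's own proof is equally silent on this.
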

\begin{proof}[Proof of Lemma~\ref{lem:test function at O}]
Since $u$ is a viscosity sub-solution of~\eqref{eq: Hamilton Jacobi entry},
$u_{i}$ is a viscosity sub-solution of~\eqref{eq:H-J equation with H+}. 
Recal that $H_{i}\left(x,\cdot\right)$ is coercive
for any $x\in \Gamma_{i}\cap B\left(O,r\right)$, we can apply the proof
 in~\cite[Lemma 3.2]{AOT2015}, which is based on arguments due to Ishii and contained in~\cite{Ishii2013}.
\end{proof}
\begin{lem}[Sub-optimality]
\label{lem: Sub optimality} Under assumption $\left[H\right]$,
let $u=\left(u_{1},\ldots,u_{N}\right)$ be a viscosity sub-solution
of~\eqref{eq: Hamilton Jacobi entry}. Consider $i=\overline{1,N} ,x\in \Gamma_{i}\backslash\left\{ O\right\} $
and $\alpha_{i}\in L^{\infty}\left(0,\infty;A_{i}\right)$. Let $T>0$
be such that $y_{x}\left(t\right)=x+\left[\int_{0}^{t}f_{i}\left(y_{x}\left(s\right),\alpha_{i}\left(s\right)\right)ds\right]e_{i}$
belongs to $\Gamma_{i}$ for any $t\in\left[0,T\right]$, then
\[
u_{i}\left(x\right)\le\int_{0}^{T}\ell_{i}\left(y_{x}\left(s\right),\alpha_{i}\left(s\right)\right)e^{-\lambda s}ds+u_{i}\left(y_{x}\left(T\right)\right)e^{-\lambda T}.
\]
\end{lem}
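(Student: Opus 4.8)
The plan is to establish the sub-dynamic-programming inequality by showing that, along the given controlled trajectory, the real-valued function
\[
g(t):=u_i\bigl(y_x(t)\bigr)e^{-\lambda t}+\int_0^{t}\ell_i\bigl(y_x(s),\alpha_i(s)\bigr)e^{-\lambda s}\,ds
\]
is nondecreasing on $[0,T]$; the claimed estimate is then exactly $g(0)\le g(T)$. Since $y_x$ remains in the single edge $\Gamma_i$, only two consequences of the sub-solution property of $u$ are needed: the interior inequality $\lambda u_i(z)+H_i\bigl(z,\frac{du_i}{dx_i}(z)\bigr)\le 0$ for $z\in\Gamma_i\setminus\{O\}$ (in the viscosity sense), and, at the vertex, the inequality $\lambda u_i(O)+H_O^T\le 0$ obtained by retaining only the third term in the maximum of the boundary condition in~\eqref{eq: Hamilton Jacobi entry}.

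First I would record the regularity that allows differentiation along the curve. By Lemma~\ref{lem:test function at O}, $u_i$ is Lipschitz in $B(O,r)\cap\Gamma_i$, and away from $O$ the coercivity of $H_i$ makes $u_i$ locally Lipschitz by the classical theory; as $f_i$ is bounded by $M$, the trajectory $y_x$ is Lipschitz, so $t\mapsto u_i(y_x(t))$ is absolutely continuous and it suffices to check $g'(t)\ge 0$ for a.e.\ $t$. I would then invoke two standard facts: a viscosity sub-solution satisfies its inequality in the classical sense at every point of differentiability; and, for the Lipschitz function $u_i$ composed with the absolutely continuous scalar motion along $\Gamma_i$, the chain rule $\frac{d}{dt}u_i(y_x(t))=\frac{du_i}{dx_i}(y_x(t))\,f_i(y_x(t),\alpha_i(t))$ holds for a.e.\ $t$, with the convention that the right-hand side is $0$ wherever $f_i(y_x(t),\alpha_i(t))=0$. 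On the set where $f_i(y_x(t),\alpha_i(t))\ne 0$, the change-of-variables (area) formula for the absolutely continuous scalar motion shows that the Lebesgue-null set on which $u_i$ fails to be differentiable is pulled back to a null set of times; hence for a.e.\ such $t$ the point $y_x(t)$ lies in $\Gamma_i\setminus\{O\}$ and is a differentiability point of $u_i$, and from $H_i\bigl(y_x(t),p\bigr)\ge -pf_i(y_x(t),\alpha_i(t))-\ell_i(y_x(t),\alpha_i(t))$ together with $\lambda u_i+H_i\le 0$ one gets
\[
g'(t)=\Bigl[\tfrac{du_i}{dx_i}(y_x(t))\,f_i(y_x(t),\alpha_i(t))-\lambda u_i(y_x(t))+\ell_i(y_x(t),\alpha_i(t))\Bigr]e^{-\lambda t}\ge 0 .
\]

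It remains to treat the set $E:=\{t\in[0,T]:f_i(y_x(t),\alpha_i(t))=0\}$. For a.e.\ $t\in E$ one has $\dot y_x(t)=0$, so by the chain-rule convention $\frac{d}{dt}u_i(y_x(t))=0$ and $g'(t)=\bigl[-\lambda u_i(y_x(t))+\ell_i(y_x(t),\alpha_i(t))\bigr]e^{-\lambda t}$. If $y_x(t)\in\Gamma_i\setminus\{O\}$ this is nonnegative exactly as above (the interior inequality with $f_i=0$ gives $\lambda u_i\le\ell_i$); if $y_x(t)=O$, then $\alpha_i(t)\in A_i^{O}$, and the vertex inequality $\lambda u_i(O)\le -H_O^T=\min_{j}\min_{a\in A_j^{O}}\ell_j(O,a)\le\ell_i(O,\alpha_i(t))$ again yields $g'(t)\ge 0$. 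Integrating $g'\ge 0$ over $[0,T]$ gives $g(0)\le g(T)$, which is the assertion. The main obstacle is the careful book-keeping at the vertex: one must verify that the contact-time set $\{t:y_x(t)=O\}$ contributes, up to a null set, only through the standstill regime $f_i(O,\alpha_i)=0$ (so that $\alpha_i\in A_i^{O}$ and $H_O^T$ is the relevant quantity), and that the a.e.\ chain rule together with the change-of-variables control of the non-differentiability set of $u_i$ remain valid up to and including $O$; the interior estimates are otherwise routine and parallel~\cite[Lemma 3.5]{AOT2015}.
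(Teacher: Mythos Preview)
Your overall strategy---showing that $g(t)=u_i(y_x(t))e^{-\lambda t}+\int_0^t\ell_i\,e^{-\lambda s}\,ds$ is nondecreasing and then reading off $g(0)\le g(T)$---is exactly the idea behind \cite[Lemma~3.5]{AOT2015}, to which the paper's proof simply reduces after noting that $u_i$ is a sub-solution of~\eqref{eq:H-J equation with H+} with $\lambda u_i(O)+H_O^T\le 0$. Your treatment of the vertex is also correct: on $\{t:y_x(t)=O\}$ one has $\dot y_x=0$ a.e., hence $\alpha_i(t)\in A_i^{O}$ a.e.\ there, and the boundary inequality $\lambda u_i(O)\le -H_O^T\le\ell_i(O,\alpha_i(t))$ yields $g'\ge 0$.

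There is, however, a genuine gap in the regularity step. You assert that ``away from $O$ the coercivity of $H_i$ makes $u_i$ locally Lipschitz,'' but $[H4]$ is a controllability hypothesis \emph{only at $O$}: combined with $[H1]$ it gives $[-\delta/2,\delta/2]\subset\{f_i(x,a):a\in A_i\}$ merely for $x\in\Gamma_i\cap B(O,r)$, which is exactly the content of Lemma~\ref{lem:test function at O}. For $x$ outside this neighbourhood nothing in $[H]$ forces $F_i(x)$ to contain a neighbourhood of $0$, so $H_i(x,\cdot)$ need not be coercive and an upper semicontinuous sub-solution need not be locally Lipschitz there. Without Lipschitz regularity of $u_i$ along the whole trajectory, $t\mapsto u_i(y_x(t))$ is only upper semicontinuous, not absolutely continuous, and your a.e.\ chain-rule computation of $g'$ is unjustified. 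The standard repair---and what underlies the argument in~\cite{AOT2015}---is to regularize by sup-convolution: $u_i^\varepsilon$ is Lipschitz (indeed semiconvex), is a viscosity sub-solution of an equation with Hamiltonian perturbed by $o_\varepsilon(1)$, and your differentiation argument applied to $u_i^\varepsilon$ gives the desired inequality up to an $o_\varepsilon(1)$ error; one then lets $\varepsilon\to 0$ using $u_i^\varepsilon\downarrow u_i$. Once this regularization layer is inserted (with the usual care near the endpoint $O$), the remainder of your proof goes through unchanged.
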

\begin{proof}[Proof of Lemma~\ref{lem: Sub optimality}]
Since $u$ is a viscosity sub-solution of~\eqref{eq: Hamilton Jacobi entry},
$u_{i}$ is a viscosity sub-solution of~\eqref{eq:H-J equation with H+}. 
and satisfies $u_{i}\left(O\right)\le-\dfrac{H_{O}^{T}}{\lambda}$.
Hence, we can apply the proof in~\cite[Lemma 3.5 ]{AOT2015}.
\end{proof}
\begin{rem}
Under assumption $\left[H\right]$, Lemmas~\ref{lem: Super optimality 1},~\ref{lem: Super optimality 2},~\ref{lem:test function at O} and~\ref{lem: Sub optimality} hold for vicosity sub- and super-solution $\hat{u}$ and $\hat{w}$ repestively, of the exit cost control problem if~\eqref{ineq: CP DPP_enter} replaced by
\[
\widehat{w}_{i}\left(O\right)<\min\left\{ \min_{j\ne i}\left\{ \widehat{w}_{j}\left(O\right)\right\} +d_{i},-\dfrac{H_{O}^{T}}{\lambda}+d_{i}\right\} .
\]
\end{rem}
\begin{thm}[Comparison Principle]
\label{thm: Comparison Principle} Under assumption $\left[H\right]$,
let $u$
be a bounded viscosity sub-solution of~\eqref{eq: Hamilton Jacobi entry} and $w$ be a bounded viscosity super-solution of~\eqref{eq: Hamilton Jacobi entry}; then $u\le w$ in $\mathcal{G}$,
componentwise. This theorem also holds for viscosity sub- and super-solution $\widehat{u}$ and $\widehat{w}$, respectively, of the exit cost control problem~\eqref{eq: Hamilton Jacobi exit}.
\end{thm}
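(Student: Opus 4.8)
The plan is to run a comparison argument in the spirit of~\cite{AOT2015}, reducing everything to the vertex $O$ and there exploiting the strict positivity of the entry costs. Throughout, write $M_i:=\sup_{\Gamma_i}(u_i-w_i)$ and $M:=\max_i M_i$; the goal is $M\le 0$, so I argue by contradiction assuming $M>0$. First I would dispose of the interior: on each open edge $\Gamma_i\setminus\{O\}$ the pair $u_i,w_i$ is a bounded viscosity sub- and super-solution of $\lambda v+H_i(x,v')=0$, a setting where the classical theory applicable on $\mathcal{G}\setminus\{O\}$ applies. Together with $\lambda>0$ and boundedness, this shows that a positive defect $M_i$ can only be realised at $O$. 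Hence the maximum is attained at the vertex: the set $S:=\{i:\ (u_i-w_i)(O)=M\}$ is non-empty and $M=(u_i-w_i)(O)$ for every $i\in S$.

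Next I would record the structural inequalities forced at $O$. Since $u$ is a sub-solution, Lemma~\ref{lem:test function at O} furnishes a $C^1$ test function touching $u_i$ from above at $O$, and as the coupling and tangential terms of the junction operator do not depend on it, the sub-solution inequality yields, for every $i$,
\[
u_i(O)\le \min_{j\ne i}\{u_j(O)+c_j\}\qquad\text{and}\qquad u_i(O)\le -\tfrac{H_O^T}{\lambda}.
\]
On the super-solution side I would split each $i\in S$ according to whether the strict inequality~\eqref{ineq: CP DPP_enter} holds. If $w_i(O)<\min\{\min_{j\ne i}\{w_j(O)+c_j\},-H_O^T/\lambda\}$ (the \emph{free} case), then Lemmas~\ref{lem: Super optimality 1},~\ref{lem: Super optimality 2} and~\ref{lem: Sub optimality} apply on $\Gamma_i$: along the super-optimal trajectory $y_{x_k}$ remaining in $\Gamma_i$ I would use the super-optimality of $w_i$ and the sub-optimality of $u_i$ with the \emph{same} control, subtract the two inequalities, and obtain $(u_i-w_i)(x_k)\le M_i\,e^{-\lambda\eta_k}$. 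Letting $k\to\infty$, using $x_k\to O$, $w_i(x_k)\to w_i(O)$, $\eta_k\to\eta>0$ and the continuity of $u_i$ near $O$ from Lemma~\ref{lem:test function at O}, gives $M\le M_i e^{-\lambda\eta}\le M e^{-\lambda\eta}<M$, a contradiction. Thus no $i\in S$ can be in the free case.

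It remains to treat the \emph{constrained} case, where~\eqref{ineq: CP DPP_enter} fails for every $i\in S$; this is where the main difficulty, and the role of the entry costs, lies. For such $i$ either $w_i(O)\ge -H_O^T/\lambda$, whence $M=(u_i-w_i)(O)\le -H_O^T/\lambda-w_i(O)\le 0$, contradicting $M>0$; or $w_i(O)\ge \min_{j\ne i}\{w_j(O)+c_j\}=w_{j(i)}(O)+c_{j(i)}$ for some $j(i)\ne i$. Combining this with the sub-solution coupling $u_i(O)\le u_{j(i)}(O)+c_{j(i)}$ gives $M=(u_i-w_i)(O)\le (u_{j(i)}-w_{j(i)})(O)\le M$, so $j(i)\in S$ as well. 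Thus $i\mapsto j(i)$ maps the finite set $S$ into itself with $j(i)\ne i$; iterating produces a cycle $i_1\to i_2\to\dots\to i_m\to i_1$ of length $m\ge 2$ along which $w_{i_k}(O)\ge w_{i_{k+1}}(O)+c_{i_{k+1}}$. Summing around the cycle, the $w$-terms cancel and one is left with $0\ge \sum_k c_{i_{k+1}}>0$, which is absurd since all entry costs are strictly positive. Therefore $M\le 0$, i.e.\ $u\le w$ componentwise. The hardest step is precisely this closing of the cyclic constrained case: it is the positivity of the $c_j$ that prevents the maximal defect from being sustained indefinitely through the coupling term. The exit-cost statement for $\widehat u,\widehat w$ follows verbatim, replacing~\eqref{ineq: CP DPP_enter} by the inequality recorded in the remark preceding the theorem and~\eqref{eq: Hamilton Jacobi entry} by~\eqref{eq: Hamilton Jacobi exit}.
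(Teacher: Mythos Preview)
Your proof is correct and follows the same overall strategy as the paper's first proof (the one inspired by~\cite{AOT2015}): reduce to the vertex by interior comparison, dispose of the \emph{free} case via the sub-/super-optimality Lemmas~\ref{lem: Super optimality 2} and~\ref{lem: Sub optimality}, and in the \emph{constrained} case exploit the coupling together with the strict positivity of the entry costs. The one substantive difference is how you close the constrained case. You chase the map $i\mapsto j(i)$ through the finite set $S$ until it cycles, then sum $w_{i_k}(O)\ge w_{i_{k+1}}(O)+c_{i_{k+1}}$ around the loop to reach $0\ge\sum_k c_{i_{k+1}}>0$. The paper instead makes a single choice that short-circuits the iteration: it takes $j_0$ to be a \emph{global} minimizer of $j\mapsto w_j(O)+c_j$ over all $j$; since $c_{j_0}>0$, this forces $w_{j_0}(O)<w_j(O)+c_j$ for every $j$, so $j_0$ automatically satisfies~\eqref{ineq: CP DPP_enter}, and one checks $u_{j_0}(O)>w_{j_0}(O)$ and finishes by Case~1. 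Your cycle argument is the standard device from optimal switching systems and works equally well; the paper's shortcut just observes that positivity of $c_{j_0}$ already places the global minimizer in the free regime, so no chain is needed. As a minor remark, your sub-case $w_i(O)\ge -H_O^T/\lambda$ is actually vacuous for $i\in S$, since you have already recorded $u_i(O)\le -H_O^T/\lambda$ and $w_i(O)<u_i(O)$; the paper notes this upfront as $\lambda w_i(O)+H_O^T<0$.
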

We give two proofs of Theorem~\ref{thm: Comparison Principle}. The first one is inspired by~\cite{AOT2015} and uses the 
previously stated lemmas. The second one uses the  elegant arguments proposed in~\cite{LS2016}.
\begin{proof}[{{Proof of Theorem~\ref{thm: Comparison Principle} inspired by~\cite{AOT2015} }}]
 We focus on $u$ and $w$, the arguments used for the comparison of $\widehat{u}$ and $\widehat{w}$
are totally similar. Suppose by contradiction that there exists $x\in \Gamma_{i}$
such that $u_{i}\left(x\right)-w_{i}\left(x\right)>0$. By classical
comparison arguments for the boundary value problem, see~\cite{Barles2013}, $\sup_{\partial \Gamma_{i}}\left\{ u_{i}-v_{i}\right\} ^{+}\ge\sup_{\Gamma_{i}}\left\{ u_{i}-v_{i}\right\} ^{+}$, so we have
\[
u_{i}\left(O\right)-w_{i}\left(O\right)=\max_{x\in \Gamma_{i}}\left\{ u_{i}\left(x\right)-w_{i}\left(x\right)\right\} >0.
\]
By definition of viscosity sub-solution
\begin{equation}
\lambda u_{i}\left(O\right)+H_{O}^{T}\le0.\label{ineq: CP HOT}
\end{equation}
This implies $\lambda w_{i}\left(O\right)+H_{O}^{T}<0$. We now consider
the two following cases.

\begin{description}
\item{\emph{Case 1:}} If $w_{i}\left(O\right)<\min_{j\ne i}\left\{ w_{j}\left(O\right)+c_{j}\right\} $,
from Lemma~\ref{lem: Super optimality 2} (using the same notations),
\[
w_{i}\left(x_{k}\right)\ge\int_{0}^{\eta_{k}}\ell_{i}\left(y_{x_{k}}\left(s\right),\alpha_{i}^{k}\left(s\right)\right)e^{-\lambda s}ds+w_{i}\left(y_{x_{k}}\left(\eta_{k}\right)\right)e^{-\lambda\eta_{k}}.
\]
Moreover, according to Lemma~\ref{lem: Sub optimality}, we also have
\[
u_{i}\left(x_{k}\right)\le\int_{0}^{\eta_{k}}\ell_{i}\left(y_{x_{k}}\left(s\right),\alpha_{i}^{k}\left(s\right)\right)e^{-\lambda s}ds+u_{i}\left(y_{x_{k}}\left(\eta_{k}\right)\right)e^{-\lambda\eta_{k}}.
\]
This yields
\[
u_{i}\left(x_{k}\right)-w_{i}\left(x_{k}\right)\le\left[u_{i}\left(y_{x_{k}}\left(\eta_{k}\right)\right)-w_{i}\left(y_{x_{k}}\left(\eta_{k}\right)\right)\right]e^{-\lambda\eta_{k}}\le\left[u_{i}\left(O\right)-w_{i}\left(O\right)\right]e^{-\lambda\eta_{k}}.
\]
By letting $k$ tend to $\infty$, one gets
\[
u_{i}\left(O\right)-w_{i}\left(O\right)\le\left[u_{i}\left(O\right)-w_{i}\left(O\right)\right]e^{-\lambda\eta}.
\]
This implies that $u_{i}\left(O\right)-w_{i}\left(O\right)\le0$ and leads
to a contradiction.
\item{\emph{Case 2:}} If $w_{i}\left(O\right)\ge\min_{j\ne i}\left\{ w_{j}\left(O\right)+c_{j}\right\} $,
then there exists $j_{0}\ne i$ such that
\[
w_{j_{0}}\left(O\right)+c_{j_{0}}=\min_{j=\overline{1,N}}\left\{ w_{j}\left(O\right)+c_{j}\right\} =\min_{j\ne i}\left\{ w_{j}\left(O\right)+c_{j}\right\} \le w_{i}\left(O\right),
\]
because $c_i>0$. Since $c_{j_{0}}$ is positive
\begin{equation}
w_{j_{0}}\left(O\right)<\min_{j\ne j_{0}}\left\{ w_{j}\left(O\right)+c_{j}\right\}.\label{ineq: comparison 1}
\end{equation}

Next, by Lemma~\ref{lem:test function at O}, there exists a test function $\varphi_{i}$  in $C^{1}\left({J}_{i}\right)$
that touches $u_i$ from above at $O$, it yields 
\[
\lambda u_{i}\left(O\right)-\lambda\min_{j\ne i}\left\{ u_{j}\left(O\right)+c_{j}\right\} \le\lambda u_{i}\left(O\right)+\max\left\{ -\lambda\min_{j\ne i}\left\{ u_{j}\left(O\right)+c_{j}\right\} ,H_{i}^{+}\left(O,\dfrac{d\varphi_{i}}{dx_{i}}\left(O\right)\right),H_{0}^{T}\right\} \le0.
\]
Therefore
\[
w_{j_{0}}\left(O\right)+c_{j_{0}}\le w_{i}\left(O\right)<u_{i}\left(O\right)\le\min_{j\ne i}\left\{ u_{j}\left(O\right)+c_{j}\right\} \le u_{j_{0}}\left(O\right)+c_{j_{0}}.
\]
Thus 
\begin{equation}
w_{j_{0}}\left(O\right)<u_{j_{0}}\left(O\right).\label{ineq: comparison}
\end{equation}
Replacing index $i$ by $j_{0}$ in~\eqref{ineq: CP HOT},
we get 
\begin{equation}
\lambda w_{j_{0}}\left(O\right)+H_{O}^{T}<0.\label{ineq: comparison 2}
\end{equation}
By~\eqref{ineq: comparison 1} and~\eqref{ineq: comparison 2},~\eqref{ineq: CP DPP_enter} holds true. Repeating the proof of \emph{Case 1} with $j_{0}$, we reach a contradiction with~\eqref{ineq: comparison}. It ends the proof. 
\end{description}
\end{proof}
The comparison principle can also be obtained alternatively, using 
the arguments which were very recently proposed by Lions and Souganidis  in~\cite{LS2016}.
This new proof is self-combined and the arguments do not rely at all on optimal control theory, but are deeply connected to the ideas used by  Soner~\cite{Soner1986a,Soner1986b} and Capuzzo-Dolcetta and Lions~\cite{DL1990} for proving comparison principles for state-constrained Hamilton-Jacobi equations
\begin{proof}[Proof of Theorem~\ref{thm: Comparison Principle} inspired by~\cite{LS2016}]
We start as in first proof. We argue by contradiction without loss of generality, assuming that there exists $i$ such that
\[
u_{i}\left(O\right)-w_{i}\left(O\right)=\max_{\Gamma_{i}}\left\{ u_{i}\left(x\right)-w_{i}\left(x\right)\right\} >0.
\]
Therefore $w_{i}\left(O\right)<-\dfrac{H_{O}^{T}}{\lambda}$. We now consider
the two following cases.
\begin{description}
\item{\emph{Case 1:}}
 If $w_{i}\left(O\right)<\min_{j\ne i}\left\{ w_{j}\left(O\right)+c_{j}\right\} $,
then $w_{i}$ is a viscosity super-solution of~\eqref{eq:H-J equation with H+}. Recall that by Lemma~\ref{lem:test function at O}, there exists a positive number $L$ such that for $i=\overline{1,N}$, $u_i$ is Lipschitz continuous with Lipschitz constant $L$ in $\Gamma_{i}\cap B(0,r)$.
We consider the function
\begin{align*}
\Psi_{i,\varepsilon}:\Gamma_{i}\times \Gamma_{i} & \longrightarrow\mathbb{R}\\
\left(x,y\right) & \longrightarrow u_{i}\left(x\right)-w_{i}\left(y\right)-\dfrac{1}{2\varepsilon}\left[-\left|x\right|+\left|y\right|+\delta\left(\varepsilon\right)\right]^{2}-\gamma\left(\left|x\right|+\left|y\right|\right),
\end{align*}
where $\delta\left(\varepsilon\right)=\left(L+1\right)\varepsilon$ and
$\gamma\in\left(0,\dfrac{1}{2}\right)$. It is clear that  $\Psi_{i,\varepsilon}$
attains its maximum $M_{\varepsilon,\gamma}$ at $\left(x_{\varepsilon,\gamma},y_{\varepsilon,\gamma}\right)\in \Gamma_{i}\times \Gamma_{i}$.
By classical techniques,  we check that $x_{\varepsilon,\gamma},y_{\varepsilon,\gamma}\rightarrow O$
and that $\dfrac{\left(x_{\varepsilon,\gamma}-y_{\varepsilon,\gamma}\right)^{2}}{\varepsilon}\rightarrow0$
as $\varepsilon\rightarrow0$. Indeed, one has
\begin{eqnarray}\notag
 & & u_{i}\left(x_{\varepsilon,\gamma}\right)-w_{i}\left(y_{\varepsilon,\gamma}\right)-\dfrac{\left[-\left|x_{\varepsilon,\gamma}\right|+\left|y_{\varepsilon,\gamma}\right|+\delta\left(\varepsilon\right)\right]^{2}}{2\varepsilon}-\gamma\left(\left|x_{\varepsilon,\gamma}\right|+\left|y_{\varepsilon,\gamma}\right|\right)\nonumber \\
&\ge & \max_{\Gamma_{i}}\left\{ u_{i}\left(x\right)-w_{i}\left(x\right)-2\gamma\left|x\right|\right\} -\dfrac{\delta^{2}\left(\varepsilon\right)}{2\varepsilon}\label{ineq: Lions-Souganidis classical 1}\\
&\ge & u_{i}\left(O\right)-w_{i}\left(O\right)-\dfrac{\left(L+1\right)^{2}}{2}\varepsilon.\label{ineq: Lions-Souganidis classical 2}
\end{eqnarray}
Since $u_{i}\left(O\right)-v_{i}\left(O\right)>0$, the term in~\eqref{ineq: Lions-Souganidis classical 2} is
positive when $\varepsilon$ is small enough.
We also deduce from the above inequality and from the boundedness of $u_{i}$ and $w_{i}$ that, maybe after the extraction of a subsequence,  $x_{\varepsilon,\gamma},y_{\varepsilon,\gamma}\rightarrow x_{\gamma}$
as $\varepsilon\rightarrow0$, for some  $x_{\gamma}\in \Gamma_{i}$.
 From~\eqref{ineq: Lions-Souganidis classical 1},
\[
u_{i}\left(x_{\varepsilon,\gamma}\right)-w_{i}\left(y_{\varepsilon,\gamma}\right)-\dfrac{\left(\left|x_{\varepsilon,\gamma}\right|-\left|y_{\varepsilon,\gamma}\right|\right)^{2}}{2\varepsilon}-\dfrac{\left(-\left|x_{\varepsilon,\gamma}\right|+\left|y_{\varepsilon,\gamma}\right|\right)\delta\left(\varepsilon\right)}{\varepsilon}\ge\max_{\Gamma_{i}}\left\{ u_{i}\left(x\right)-w_{i}\left(x\right)-2\gamma\left|x\right|\right\} .
\]
Taking the $\limsup$ on both sides of this inequality when $\varepsilon\rightarrow0$,
\begin{align*}
u_{i}\left(x_{\gamma}\right)-w_{i}\left(x_{\gamma}\right)-2\gamma\left|x_{\gamma}\right| & \ge\max_{\Gamma_{i}}\left\{ u_{i}\left(x\right)-w_{i}\left(x\right)-2\gamma\left|x\right|\right\} +\limsup_{\varepsilon\rightarrow0}\dfrac{\left(\left|x_{\varepsilon,\gamma}\right|-\left|y_{\varepsilon,\gamma}\right|\right)^{2}}{2\varepsilon}\\
 & \ge u_{i}\left(O\right)-w_{i}\left(O\right)+\limsup_{\varepsilon\rightarrow0}\dfrac{\left(\left|x_{\varepsilon,\gamma}\right|-\left|y_{\varepsilon,\gamma}\right|\right)^{2}}{2\varepsilon}\\
 & \ge u_{i}\left(O\right)-w_{i}\left(O\right)+\liminf_{\varepsilon\rightarrow0}\dfrac{\left(\left|x_{\varepsilon,\gamma}\right|-\left|y_{\varepsilon,\gamma}\right|\right)^{2}}{2\varepsilon}\\
 & \ge u_{i}\left(O\right)-w_{i}\left(O\right).
\end{align*}
Recalling that $u_{i}\left(O\right)-w_{i}\left(O\right)=\max_{\Gamma_{i}}\left(u_{i}-w_{i}\right)$,
we obtain from the inequalities above that  $x_{\gamma}=O$ and that
\begin{equation}\label{eq: distance x,y and epsilon}
 \lim_{\varepsilon\rightarrow0}\dfrac{\left(\left|x_{\varepsilon,\gamma}\right|-\left|y_{\varepsilon,\gamma}\right|\right)^{2}}{2\varepsilon}=0.
\end{equation}
We claim that if $\varepsilon>0$, then $x_{\varepsilon,\gamma}\ne O$. 
Indeed, assume by contradiction that $x_{\varepsilon,\gamma}=O$:
\begin{enumerate}
\item if $y_{\varepsilon,\gamma}>0$, then
\[
M_{\varepsilon,\gamma}=u_{i}\left(O\right)-w_{i}\left(y_{\varepsilon,\gamma}\right)-\dfrac{1}{2\varepsilon}\left[\left|y_{\varepsilon,\gamma}\right|+\delta\left(\varepsilon\right)\right]^{2}-\gamma\left|y_{\varepsilon,\gamma}\right|\ge u_{i}\left(y_{\varepsilon,\gamma}\right)-w_{i}\left(y_{\varepsilon,\gamma}\right)-\dfrac{\delta^{2}\left(\varepsilon\right)}{2\varepsilon}-2\gamma\left|y_{\varepsilon,\gamma}\right|.
\]
Since $u_{i}$ is Lipschitz continuous in $B\left(O,r\right)\cap \Gamma_{i}$,  we see that for $\varepsilon$ small enough
\[
L\left|y_{\varepsilon.\gamma}\right|\ge u_{i}\left(O\right)-u_{i}\left(y_{\varepsilon,\gamma}\right)\ge\dfrac{\left|y_{\varepsilon,\gamma}\right|^{2}}{2\varepsilon}+\dfrac{\left|y_{\varepsilon,\gamma}\right|\delta\left(\varepsilon\right)}{\varepsilon}-\gamma\left|y_{\varepsilon,\gamma}\right|\ge\dfrac{\left|y_{\varepsilon,\gamma}\right|\delta\left(\varepsilon\right)}{\varepsilon}-\gamma\left|y_{\varepsilon,\gamma}\right|.
\]
Therefore, if $y_{\varepsilon,\gamma}\not=O$, then $L\ge L+1-\gamma$ which
gives a contradiction since $\gamma\in\left(0,\dfrac{1}{2}\right)$.
\item 
Otherwise, if $y_{\varepsilon,\gamma}=O$, then 
\[
M_{\varepsilon,\gamma}=u_{i}\left(O\right)-w_{i}\left(O\right)-\dfrac{\delta^{2}\left(\varepsilon\right)}{2\varepsilon}
\ge u_{i}\left(\varepsilon {e}_{i}\right)-w_{i}\left(O\right)-\dfrac{1}{2\varepsilon}\left[-\varepsilon+\delta\left(\varepsilon\right)\right]^{2}-\gamma\varepsilon.
\]
Since $u_{i}$ is Lipschitz continuous in $B\left(O,r\right)\cap \Gamma_{i}$, we see that for $\varepsilon$ small enough, 
\[
L\varepsilon\ge u_{i}\left(O\right)-u_{i}\left(\varepsilon e_{i}\right)\ge\dfrac{\left|y_{\varepsilon.\gamma}\right|^{2}}{2\varepsilon}+\dfrac{\left|y_{\varepsilon.\gamma}\right|\delta\left(\varepsilon\right)}{\varepsilon}-2\gamma\left|y_{\varepsilon.\gamma}\right|\ge\dfrac{\left|y_{\varepsilon.\gamma}\right|\delta\left(\varepsilon\right)}{\varepsilon}-2\gamma\left|y_{\varepsilon.\gamma}\right|.
\]
This implies that $L\ge-\dfrac{1}{2}+L+1-\gamma$,
which gives a contradiction since $\gamma\in\left(0,\dfrac{1}{2}\right)$.     
\end{enumerate}
Therefore the claim is proved. It follows that we can apply the viscosity inequality for $u_i$ at $x_{\varepsilon,\gamma}$. Moreover, notice that the viscosity super-solution inequality~\eqref{eq:H-J equation with H+} holds also for $y_{\varepsilon,\gamma}=0$ since $H_{i}\left(O,p\right)\le H^{+}_{i}\left(O,p\right)$ for any $p$. Therefore
\begin{align*}
u_{i}\left(x_{\varepsilon,\gamma}\right)+H_{i}\left(x_{\varepsilon,\gamma},\dfrac{-x_{\varepsilon,\gamma}+y_{\varepsilon,\gamma}+\delta\left(\varepsilon\right)}{\varepsilon}+\gamma\right) & \le0,\\
w_{i}\left(y_{\varepsilon,\gamma}\right)+H_{i}\left(y_{\varepsilon,\gamma},\dfrac{-x_{\varepsilon,\gamma}+y_{\varepsilon,\gamma}+\delta\left(\varepsilon\right)}{\varepsilon}-\gamma\right) & \ge0.
\end{align*}
Subtracting the two inequalities,
\begin{equation}\label{ineq: estimate Hamiltonian}
u_{i}\left(x_{\varepsilon,\gamma}\right)-w_{i}\left(y_{\varepsilon,\gamma}\right)\le H_{i}\left(y_{\varepsilon,\gamma},\dfrac{-x_{\varepsilon,\gamma}+y_{\varepsilon,\gamma}+\delta\left(\varepsilon\right)}{\varepsilon}+\gamma\right)-H_{i}\left(x_{\varepsilon,\gamma},\dfrac{-x_{\varepsilon,\gamma}+y_{\varepsilon,\gamma}+\delta\left(\varepsilon\right)}{\varepsilon}-\gamma\right).
\end{equation}

Using $\left[H1\right]$ and $\left[H2\right]$, it is easy to see that there exists $\overline{M}_{i}>0$
such that for any $x,y\in \Gamma_{i},p,q\in\mathbb{R}$
\begin{align*}
\left|H_{i}\left(x,p\right)-H_{i}\left(y,q\right)\right| & \le\left|H_{i}\left(x,p\right)-H_{i}\left(y,p\right)\right|+\left|H_{i}\left(y,p\right)-H_{i}\left(y,q\right)\right|\\
 & \le\overline{M}_{i}\left|x-y\right|\left(1+\left|p\right|\right)+\overline{M}_{i}\left|p-q\right|.
\end{align*}
It yields
\begin{align*}
u_{i}\left(x_{\varepsilon,\gamma}\right)-w_{i}\left(y_{\varepsilon,\gamma}\right) & \le\overline{M}_{i}\left[\left|x_{\varepsilon,\gamma}-y_{\varepsilon,\gamma}\right|\left(1+\left|\dfrac{-x_{\varepsilon,\gamma}+y_{\varepsilon,\gamma}+\delta\left(\varepsilon\right)}{\varepsilon}-\gamma\right|\right)+2\left|\gamma\right|\right]\\
 & \le\overline{M}_{i}\left[\left|x_{\varepsilon,\gamma}-y_{\varepsilon,\gamma}\right|\left(\gamma+1+\dfrac{\delta\left(\varepsilon\right)}{\varepsilon}\right)+\dfrac{\left|x_{\varepsilon,\gamma}-y_{\varepsilon,\gamma}\right|^{2}}{\varepsilon}+2\left|\gamma\right|\right].
\end{align*}
Applying~\eqref{eq: distance x,y and epsilon},  let $\varepsilon$ tend to $0$ and $\gamma$ tend to $0$, we
obtain that $u_{i}\left(O\right)-w_{i}\left(O\right)\le 0$, the desired contradiction.
\item{\emph{Case 2:}}  $w_{i}\left(O\right)\ge\min_{j\ne i}\left\{ w_{j}\left(O\right)+c_{j}\right\} =w_{j_{0}}\left(O\right)+c_{j_{0}}$.
Using the same arguments as in \emph{Case 2} of the first proof, we get
\[
w_{j_{0}}<\min\left\{ \min_{j\ne j_{0}}\left\{ w_{j}\left(O\right)+c_{j}\right\} ,-\dfrac{H_{O}^{T}}{\lambda}\right\} 
\]
and $w_{j_{0}}\left(O\right)<u_{j_{0}}\left(O\right)$. Repeating \emph{Case 1}, replacing the index $i$ by $j_{0}$, implies
that $w_{j_{0}}\left(O\right)\ge u_{j_{0}}\left(O\right)$, the desired contradiction.
\end{description}
\end{proof}
\begin{cor}[Uniqueness]
\label{cor: Uniqueness}
If $\mathsf{v}$ is the value function (with entry costs)
and  $\left(v_{1},\ldots,v_{N}\right)$ is defined by
\[
v_{i}\left(x\right)=\begin{cases}
\mathsf{v}\left(x\right) & \quad\mbox{if }x\in \Gamma_{i}\backslash\left\{ O\right\} ,\\
\lim_{\delta\rightarrow0^{+}}\mathsf{v}\left(\delta e_{i}\right) & \quad\mbox{if }x=O,
\end{cases}
\]
then  $\left(v_{1},\ldots,v_{N}\right)$ is the unique
bounded viscosity solution of~\eqref{eq: Hamilton Jacobi entry}.
\\
Similarly, if $\widehat{\mathsf{v}}$ is the value function (with exit costs)
and $\left(\widehat{v}_{1},\ldots,\widehat{v}_{N}\right)$ is defined by 
\[
\widehat{v}_{i}\left(x\right)=\begin{cases}
\widehat{\mathsf{v}}\left(x\right) & \quad\mbox{if }x\in \Gamma_{i}\backslash\left\{ O\right\} ,\\
\lim_{\delta\rightarrow0^{+}}\widehat{\mathsf{v}}\left(\delta e_{i}\right) & \quad\mbox{if }x=O,
\end{cases}
\]
then $\left(\widehat{v}_{1},\ldots,\widehat{v}_{N}\right)$ is the unique
bounded viscosity solution of~\eqref{eq: Hamilton Jacobi exit}.
\end{cor}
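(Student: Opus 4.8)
The plan is to obtain the corollary as a direct consequence of the two main results already established: the existence statement of Theorem~\ref{thm: Existence} together with Lemmas~\ref{sup_property 1} and~\ref{super_property 2}, and the Comparison Principle of Theorem~\ref{thm: Comparison Principle}. First I would check that $v=(v_1,\ldots,v_N)$ is a \emph{bounded} viscosity solution of~\eqref{eq: Hamilton Jacobi entry}. Each $v_i$ is continuous on $\Gamma_i$ by Lemma~\ref{lem: extension value function}. Boundedness follows from $\left[H2\right]$: since $\ell_i$ is bounded by $M$, the running-cost part of any cost functional lies in $[-M/\lambda,M/\lambda]$, and the entry costs are nonnegative, so $v_i\ge -M/\lambda$; for an upper bound one uses, near $O$, an admissible trajectory that remains on a single edge (available thanks to $\left[H4\right]$) and therefore pays at most one entry cost, giving $v_i\le M/\lambda+\max_k c_k$. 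That $v$ solves the equation in the viscosity sense on each $\Gamma_i\setminus\{O\}$ is classical optimal-control theory, while at the vertex $O$ it is exactly the content of Theorem~\ref{thm: Existence}. Hence $v$ is a bounded viscosity solution.

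For uniqueness, let $u=(u_1,\ldots,u_N)$ be any other bounded viscosity solution of~\eqref{eq: Hamilton Jacobi entry}. Being a solution, $u$ is in particular a bounded viscosity sub-solution, and $v$ is a bounded viscosity super-solution; Theorem~\ref{thm: Comparison Principle} then yields $u_i\le v_i$ on $\mathcal{G}$, componentwise. Exchanging the roles of the two functions, now regarding $v$ as a sub-solution and $u$ as a super-solution, the same theorem gives $v_i\le u_i$. Combining the two inequalities gives $u_i=v_i$ for every $i=\overline{1,N}$, which proves that $v$ is the unique bounded viscosity solution of~\eqref{eq: Hamilton Jacobi entry}.

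The exit-cost case is handled verbatim: $\widehat v=(\widehat v_1,\ldots,\widehat v_N)$ is a bounded viscosity solution of~\eqref{eq: Hamilton Jacobi exit} by the analogue of Theorem~\ref{thm: Existence} (the proofs being, as announced in the text, entirely similar to those for $\mathsf{v}$), and the final assertion of Theorem~\ref{thm: Comparison Principle}, valid for sub- and super-solutions $\widehat u$ and $\widehat w$ of~\eqref{eq: Hamilton Jacobi exit}, delivers uniqueness by applying comparison twice in the same manner. There is essentially no difficult step here, since all the substantial work lies in Theorem~\ref{thm: Existence} and Theorem~\ref{thm: Comparison Principle}; the only point that genuinely requires attention is the boundedness of the value functions, which is needed precisely so that the comparison principle---stated for bounded sub- and super-solutions---can be invoked.
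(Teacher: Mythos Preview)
Your argument is correct and matches the paper's approach: the corollary is stated without proof in the paper precisely because it is the routine combination of existence (Theorem~\ref{thm: Existence} together with the classical interior theory) and the comparison principle (Theorem~\ref{thm: Comparison Principle}), applied twice with the roles of sub- and super-solution exchanged. Your added justification of boundedness is a useful detail the paper leaves implicit; note that in fact one can get $v_i\le M/\lambda$ without the $\max_k c_k$ term by choosing a trajectory that never enters a new edge (stay in $\Gamma_i$ or reach $O$ and remain there via some $a\in A_j^O$), but your slightly weaker bound is of course sufficient.
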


\begin{rem}
From Corollary \ref{cor: Uniqueness}, we see that  in order to characterize the original value function with entry costs, we need to solve first the Hamilton-Jacobi system \eqref{eq: Hamilton Jacobi entry} and find the unique viscosity solution $\left(v_{1},\ldots,v_{N}\right)$. The original value function  $\mathsf{v}$ with entry costs  satisfies
\[
\mathsf{v}\left(x\right)=\begin{cases}
v_{i}\left(x\right) & \text{if }x\in\Gamma_{i}\backslash\left\{ O\right\} ,\\
\min\left\{ \min_{i=\overline{1,N}}\left\{ v_{i}\left(O\right)+c_{i}\right\} ,-\dfrac{H_{O}^{T}}{\lambda}\right\} , & \text{if }x=O.
\end{cases}
\]
The characterization of $\mathsf{v}\left(O\right)$ follows from Theorem \ref{main theorem value function}. The characterization of the original value function with exit costs $\widehat{\mathsf{v}}$ is similar.
\end{rem}
\section{A more general optimal control problem }
\label{sec:gener-case-switch}
In what follows, we generalize the control problem studied in the previous sections
 by allowing some of the entry (or exit) costs to be zero.
The situation can be viewed as intermediary between the one studied in~\cite{AOT2015} when all the entry (or exit) costs were
 zero, and that studied above when all the entry or exit costs were positive. 
 Accordingly, every result presented below will mainly be obtained by combining the arguments proposed above
with those used in~\cite{AOT2015}. Hence, we will present the results and omit the proofs.

To be more specific, we  consider the optimal control problems  with non-negative entry cost
 $\overline{C}=\left\{ \overline{c}_{1},\ldots\overline{c}_{m},\overline{c}_{m+1},\ldots\overline{c}_{N}\right\} $
where $\overline{c}_{i}=0$ if $i\le m$ and $\overline{c}_{i}>0$
if $i>m$, keeping all the assumptions and definitions of Section \ref{sec:optim-contr-probl}
unchanged. The value function associated to $\overline{C}$ will be
denoted by $\mathsf{V}$. Similarly to Lemma~\ref{lem: extension value function},
$\mathsf{V}|_{\Gamma_{i}\backslash\left\{ O\right\} }$  is continuous and Lipschitz continuous
near $O$: therefore, it is possible to extend $\mathsf{V}|_{\Gamma_{i}\backslash\left\{ O\right\} }$
at $O$. This extension will be noted  $\mathcal{V}_{i}$. Moreover, one
can check that $\mathcal{V}_{i}\left(O\right)=\mathcal{V}_{j}\left(O\right)$ for all
$i,j\le m$, which means that $\mathsf{V}|_{\cup_{i=1}^{m}\Gamma_{i}}$ is a continuous function
which will be noted  $\mathcal{V}_{c}$ hereafter.

Combining the arguments  in~\cite{AOT2015} and in Section~\ref{sec:optim-contr-probl} leads us to the following
theorem.
\begin{thm}
The value function $\mathsf{V}$ satisfies
\[
\max_{i=\overline{m+1,N}}\left\{ \mathcal{V}_{i}\left(O\right)\right\} \le \mathsf{V}\left(O\right)=\mathcal{V}_{c}\left(O\right)\le\min\left\{ \min_{i=\overline{m+1,N}}\left\{ \mathcal{V}_{i}\left(O\right)+\overline{c}_{i}\right\} ,-\dfrac{H_{O}^{T}}{\lambda}\right\} .
\]
\end{thm}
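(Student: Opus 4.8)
The plan is to reproduce the trajectory-splicing arguments behind Lemma~\ref{value function at O (1)} and Lemma~\ref{value function at O (2)}, while distinguishing the edges $i\le m$ (zero entry cost) from the edges $i>m$ (positive entry cost). First I would establish the leftmost inequality exactly as in the first half of the proof of Lemma~\ref{value function at O (1)}: for a fixed $i$ and any admissible $\overline{\alpha}$ issued from $O$, prepend the short connecting trajectory from $x\in\Gamma_i\setminus\{O\}$ to $O$ furnished by Lemma~\ref{Control t at x near O}. Since travelling from $x$ to $O$ enters no new edge, no entry cost is charged, and one gets
\[
\mathcal{V}_i(x)\le M\tau_{x,O}+e^{-\lambda\tau_{x,O}}\mathsf{V}(O).
\]
Letting $x\to O$, so that $\tau_{x,O}\to0$ by Lemma~\ref{Control t at x near O}, gives $\mathcal{V}_i(O)\le\mathsf{V}(O)$ for every $i$; in particular $\max_{i=\overline{m+1,N}}\{\mathcal{V}_i(O)\}\le\mathsf{V}(O)$ and $\mathcal{V}_c(O)\le\mathsf{V}(O)$.

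Next I would obtain the matching upper bounds, where the positivity or vanishing of $\overline{c}_i$ becomes decisive. Running the second half of the proof of Lemma~\ref{value function at O (1)} for an arbitrary edge---leave $O$ along $\Gamma_i$ with a connecting control, pay the entry cost $\overline{c}_i$, then continue optimally from $x$---produces
\[
\mathsf{V}(O)\le \overline{c}_i+M\tau_{O,x}+e^{-\lambda\tau_{O,x}}\mathcal{V}_i(x),
\]
so that $\mathsf{V}(O)\le\mathcal{V}_i(O)+\overline{c}_i$ after sending $x\to O$. For $i\le m$ this reads $\mathsf{V}(O)\le\mathcal{V}_i(O)=\mathcal{V}_c(O)$ since $\overline{c}_i=0$, which together with the lower bound $\mathcal{V}_c(O)\le\mathsf{V}(O)$ yields the central equality $\mathsf{V}(O)=\mathcal{V}_c(O)$. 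For $i>m$ it keeps the strictly positive term and gives $\mathsf{V}(O)\le\mathcal{V}_i(O)+\overline{c}_i$, while Lemma~\ref{value function at O (2)}---whose proof only uses a constant tangential control at $O$ and is therefore insensitive to the entry costs---gives $\mathsf{V}(O)\le-H_O^T/\lambda$. Taking the minimum of these two upper bounds and substituting $\mathsf{V}(O)=\mathcal{V}_c(O)$ produces the rightmost inequality.

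The step demanding the most care is the central equality $\mathsf{V}(O)=\mathcal{V}_c(O)$, which is precisely the continuity of $\mathsf{V}$ across the zero-cost block at $O$ (the same phenomenon underlying the claim $\mathcal{V}_i(O)=\mathcal{V}_j(O)$ for $i,j\le m$). The delicate point is to make the two one-sided estimates uniform as $x\to O$: one must verify that the connecting times $\tau_{x,O}$ and $\tau_{O,x}$ from Lemma~\ref{Control t at x near O} vanish and that the Lipschitz extensions $\mathcal{V}_i$ from Lemma~\ref{lem: extension value function} are continuous up to $O$, so the error terms $M\tau$ and $(e^{-\lambda\tau}-1)\mathcal{V}_i(\cdot)$ disappear in the limit. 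Once continuity across the zero-cost edges is secured, the positive-cost edges contribute only upper bounds and never force an equality; hence, unlike Theorem~\ref{main theorem value function}, no first-exit-time dichotomy is needed---the value at $O$ is simply pinned to $\mathcal{V}_c(O)$, and the rest of the statement is the bundle of upper bounds above.
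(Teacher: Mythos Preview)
Your proposal is correct and matches the paper's intended approach. The paper does not actually write out a proof of this theorem: it states that the result follows by ``combining the arguments in~\cite{AOT2015} and in Section~\ref{sec:optim-contr-probl}'' and then omits the details. Your argument is precisely this combination---running the two halves of Lemma~\ref{value function at O (1)} edge by edge, invoking Lemma~\ref{value function at O (2)} for the tangential bound, and observing that on the zero-cost edges the upper and lower estimates squeeze to the equality $\mathsf{V}(O)=\mathcal{V}_c(O)$. You also correctly observe that, unlike Theorem~\ref{main theorem value function}, only an inequality is claimed on the right, so the first-exit-time dichotomy from that proof is unnecessary here.
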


\begin{rem}
In the case when $\overline{c}_{i}=0$ for $i=\overline{1,N} $,
 $\mathsf{V}$ is continuous on $\mathcal{G}$ and it is
exactly the value function of the problem studied in~\cite{AOT2015}.
\end{rem}
We now define a set of admissible test-function  and the Hamilton-Jacobi
equation that will characterize $\mathsf{V}$.
\begin{defn}
A function $\varphi:\left(\cup_{i=1}^{m}\Gamma_{i}\right)\times \Gamma_{m+1}\times\ldots\times \Gamma_{N}\rightarrow\mathbb{R}^{N-m+1}$ of the form  $\varphi\left(x_{c},x_{m+1},\ldots,x_{N}\right)=\left(\varphi_{c}\left(x_{c}\right),\varphi_{m+1}\left(x_{m+1}\right),\ldots,\varphi_{N}\left(x_{N}\right)\right)$
is an admissible test-function
if 
\begin{itemize}
\item $\varphi_{c}$ is continuous and for $i\le m$, $\varphi_{c}|_{\Gamma_{i}}$
belongs to $C^{1}\left(\Gamma_{i}\right)$,
\item for $i>m$, $\varphi_{i}$ belongs to $C^{1}\left(\Gamma_{i}\right)$,
\item the space of admissible test-function is noted $R\left(\mathcal{G}\right)$.
\end{itemize}
\end{defn}
\begin{defn}
\label{def: viscosity of general HJ} A function $U=\left(U_{c},U_{m+1},\ldots,U_{N}\right)$
where $U_{c}\in USC\left(\cup_{j=1}^{m}\Gamma_{j};\mathbb{R}\right),U_{i}\in USC\left(\Gamma_{i};\mathbb{R}\right)$
is called a \emph{viscosity sub-solution} of the Hamilton-Jacobi
system if  for any $\left(\varphi_{c},\varphi_{m+1},\ldots,\varphi_{N}\right)\in R\left(\mathcal{G}\right)$:
\begin{enumerate}
\item 
if  $U_{c}-\varphi_{c}$ has a local maximum  at $x_{c}\in \cup_{j=1}^{m}\Gamma_{j}$ and if
\begin{itemize}
\item    $x_{c}\in \Gamma_{j}\backslash\left\{ O\right\} $ for some $j\le m$, then
\[
\begin{array}{c}
\lambda U_{c}\left(x_{c}\right)+H_{j}\left(x,\dfrac{d\varphi_{c}}{d x_{j}}\left(x_{c}\right)\right)\le0,\end{array}
\]
\item  $x_{c}=O$, then
\[
\lambda U_{c}\left(O\right)+\max\left\{ -\lambda{\displaystyle \min_{j>m}}\left\{ U_{j}\left(O\right)+\overline{c}_{j}\right\} ,\max_{j\le m}\left\{ H_{j}^{+}\left(O,\dfrac{d\varphi_{c}}{d x_{j}^{+}}\left(O\right)\right)\right\} ,H_{O}^{T}\right\} \le0;
\]
\end{itemize}
\item  if $U_{i}-\varphi_{i}$ has a \emph{local maximum point}  at $x_{i}\in \Gamma_{i}$ for $i>m$, and if
\begin{itemize}
\item  $\ensuremath{x_{i}\in \Gamma_{i}\backslash \left\{O\right\}}$, then 
\[
\begin{array}{c}
\lambda U_{i}\left(x_{i}\right)+H_{i}\left(x,\dfrac{d\varphi_{i}}{d x_{i}}\left(x_{i}\right)\right)\le0,\end{array}
\]
\item  $x_{i}=O$, then
\[
{\displaystyle \lambda U_{i}\left(O\right)+\max\left\{ -\lambda\min_{j>m,j\ne i}\left\{ U_{j}\left(O\right)+\overline{c}_{j}\right\} ,-\lambda U_{c}\left(O\right),H_{i}^{+}\left(O,\dfrac{d\varphi_{i}}{dx_{i}}\left(O\right)\right),H_{O}^{T}\right\} }\le0.
\]
\end{itemize}
\end{enumerate}
A function $U=\left(U_{c},U_{m+1},\ldots,U_{N}\right)$
where $U_{c}\in LSC\left(\cup_{j=1}^{m}\Gamma_{j};\mathbb{R}\right),U_{i}\in LSC\left(\Gamma_{i};\mathbb{R}\right)$
is called a \emph{viscosity super-solution} of the Hamilton-Jacobi
system if 
\begin{equation}
  \label{eq:3}
U_{c}\left(O\right)\ge U_{i}\left(O\right), \quad \hbox{for } i=\overline{m+1,N},
\end{equation}
and for any $\left(\varphi_{c},\varphi_{m+1},\ldots,\varphi_{N}\right)\in R\left(\mathcal{G}\right)$:
\begin{enumerate}
\item  
if  $U_{c}-\varphi_{c}$ has a local maximum  at $x_{c}\in \cup_{j=1}^{m}\Gamma_{j}$ and if
\begin{itemize}
\item  $x_{c}\in \Gamma_{j}\backslash\left\{ O\right\} $ for some $j\le m$, then
\[
\begin{array}{c}
\lambda U_{c}\left(x_{c}\right)+H_{j}\left(x,\dfrac{d\varphi_{c}}{d x_{j}}\left(x_{c}\right)\right)\ge0,
\end{array}
\]
\item  $x_{c}=O$, then
\[
\lambda U_{c}\left(O\right)+\max\left\{ -\lambda{\displaystyle \min_{j>m}}\left\{ U_{j}\left(O\right)+\overline{c}_{j}\right\} ,\max_{j\le m}\left\{ H_{j}^{+}\left(O,\dfrac{d\varphi_{c}}{d x_{j}^{+}}\left(O\right)\right)\right\} ,H_{O}^{T}\right\} \ge0;
\]
\end{itemize}
\item if $U_{i}-\varphi_{i}$ has a \emph{local minimum point}
 at $x_{i}\in \Gamma_{i}$ for $i>m$, and 
if
\begin{itemize}
\item $\ensuremath{x_{i}\in \Gamma_{i}\backslash \left\{O\right\}}$, then
\[
\begin{array}{c}
\lambda U_{i}\left(x_{i}\right)+H_{i}\left(x,\dfrac{d\varphi_{i}}{d x_{i}}\left(x_{i}\right)\right)\ge0,\end{array}
\]
\item  $x_{i}=O$ for $i>m$ then
\[
{\displaystyle \lambda U_{i}\left(O\right)+\max\left\{ -\lambda\min_{j>m,j\ne i}\left\{ U_{j}\left(O\right)+\overline{c}_{j}\right\} ,-\lambda U_{c}\left(O\right),H_{i}^{+}\left(O,\dfrac{d\varphi_{i}}{dx_{i}}\left(O\right)\right),H_{O}^{T}\right\} }\ge0.
\]
\end{itemize}
\end{enumerate}
A function $U=\left(U_{c},U_{1},\ldots,U_{m}\right)$
where $U_{c}\in C\left(\cup_{j\le m}\Gamma_{j};\mathbb{R}\right)$ and
$U_{i}\in C\left(\Gamma_{i};\mathbb{R}\right)$ for all $i>m$ is called
a \emph{viscosity solution} of the Hamilton-Jacobi system if
it is both a viscosity sub-solution and a viscosity super-solution
of the Hamilton-Jacobi system.
\end{defn}

\begin{rem}
The term $-\lambda H_{C}\left(O\right)$ in the above definition accounts for the situation in which the trajectory enters $\cup_{j=1}^{m}\Gamma{j}$. The term $\max_{j\le m}\left\{ H_{j}^{+}\left(O,\dfrac{d\varphi_{c}}{d x_{j}^{+}}\left(O\right)\right)\right\}$ accounts for the situation in which the trajectory enters $\Gamma_{i_0}$ where $ H_{i_0}^{+}\left(O,\dfrac{d\varphi_{c}}{d x_{j}^{+}}\left(O\right)\right)=\max_{j\le m}\left\{ H_{j}^{+}\left(O,\dfrac{d\varphi_{c}}{d x_{j}^{+}}\left(O\right)\right)\right\}$.
\end{rem}

\begin{rem}
In the case when $\overline{c}_{i}=0$ for $i=\overline{1,N}$, i,e., $m=N$, the term $-\lambda \min_{j>m}U_j \left( O \right)+\overline{c}_j$ vanishes. This implies that
\begin{align*}
\max\left\{ -\lambda{\displaystyle \min_{j>m}}\left\{ U_{j}\left(O\right)+\overline{c}_{j}\right\} ,\max_{j\le m}\left\{ H_{j}^{+}\left(O,\dfrac{\partial\varphi_{c}}{\partial e_{j}^{+}}\left(O\right)\right)\right\} ,H_{O}^{T}\right\} = & \max_{j=\overline{1,N}}\left\{ H_{j}^{+}\left(O,\dfrac{\partial\varphi_{c}}{\partial e_{j}^{+}}\left(O\right)\right)\right\} \\
= & H_{O}\left(\dfrac{\partial\varphi_{c}}{\partial e_{1}^{+}}\left(O\right),\ldots,\dfrac{\partial\varphi_{c}}{\partial e_{N}^{+}}\left(O\right)\right).
\end{align*}
where $H_O \left( p_1,\ldots,p_N \right)$ is defined in \cite[page 6]{AOT2015}. This means that, in the case when all the entry costs $\overline{c}_j$ vanish, we recover the notion of viscosity solution proposed  in~\cite{AOT2015}.
\end{rem}

We now study the relationship between the value function $\mathsf{V}$ and the Hamilton-Jacobi system.

\begin{thm}\label{sec:more-general-optimal}
Let $\mathsf{V}$ be the value function corresponding to the entry costs $\overline{C}$, then 
$\left(\mathcal{V}_c,\mathcal{V}_{m+1},\ldots,\mathcal{V}_N\right)$ is a
viscosity solution of the Hamilton-Jacobi system.
\end{thm}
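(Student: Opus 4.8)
The plan is to follow the scheme of the proof of Theorem~\ref{thm: Existence}, splitting the verification into the interior of the edges and the vertex $O$, and within each into the sub-solution and super-solution inequalities of Definition~\ref{def: viscosity of general HJ}. On $\mathcal{G}\setminus\{O\}$ there is nothing new: each $\mathcal{V}_i$ ($i>m$) and each restriction $\mathcal{V}_c|_{\Gamma_j\setminus\{O\}}$ ($j\le m$) solves $\lambda u+H_i(x,du/dx)=0$ in the classical viscosity sense, since these are open intervals. The compatibility condition~\eqref{eq:3}, namely $\mathcal{V}_c(O)\ge\mathcal{V}_i(O)$ for $i>m$, is exactly the left-hand inequality in the characterization of $\mathsf{V}(O)$ stated above, and is proved as in the first part of Lemma~\ref{value function at O (1)} by sending a trajectory from $x\in\Gamma_i$ to $O$ via Lemma~\ref{Control t at x near O}. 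It therefore remains to check the two families of viscosity inequalities at $O$.

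For the sub-solution inequalities at $O$ I would argue as follows. The characterization $\mathcal{V}_c(O)=\mathsf{V}(O)\le\min\{\min_{j>m}(\mathcal{V}_j(O)+\overline{c}_j),-H_O^T/\lambda\}$ immediately yields $\lambda\mathcal{V}_c(O)-\lambda\min_{j>m}\{\mathcal{V}_j(O)+\overline{c}_j\}\le0$ and $\lambda\mathcal{V}_c(O)+H_O^T\le0$; combined with $\mathcal{V}_i(O)\le\mathsf{V}(O)\le\mathcal{V}_j(O)+\overline{c}_j$ for $i,j>m$ this also gives the corresponding two terms for each edge $i>m$, together with $\lambda\mathcal{V}_i(O)-\lambda\mathcal{V}_c(O)\le0$, which comes from $\mathcal{V}_i(O)\le\mathcal{V}_c(O)$. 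The only genuinely PDE part of the sub-solution statement is the Hamiltonian term $H_j^+$: repeating verbatim the construction of Lemma~\ref{sup_property 1} (choose $a$ with $f_j(O,a)>0$ so that the trajectory is pushed into $\Gamma_j$ and stays there, then pass to the limit in the dynamic programming inequality) gives $\lambda\mathcal{V}_c(O)+H_j^+(O,d\varphi_c/dx_j^+(O))\le0$ for every $j\le m$, hence for the maximum over $j\le m$, and likewise $\lambda\mathcal{V}_i(O)+H_i^+(O,d\varphi_i/dx_i(O))\le0$ for $i>m$. Taking the maximum of all the terms gives the required $\le0$.

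For the super-solution inequalities I would treat the two types of points separately. For an edge $i>m$, if none of the three terms $-\lambda\min_{j>m,j\ne i}\{\mathcal{V}_j(O)+\overline{c}_j\}$, $-\lambda\mathcal{V}_c(O)$, $H_O^T$ already makes the bracket nonnegative, then in particular
\[
\mathcal{V}_i(O)<\min\Big\{\min_{j>m,j\ne i}\{\mathcal{V}_j(O)+\overline{c}_j\},\ \mathcal{V}_c(O),\ -\dfrac{H_O^T}{\lambda}\Big\};
\]
under this strict inequality the analogue of Lemma~\ref{super_property 1} shows that every $\varepsilon$-optimal trajectory issued from a nearby $x\in\Gamma_i\setminus\{O\}$ remains in $\Gamma_i$ for a uniform time (the extra options of entering the free cluster, which would cost $\mathcal{V}_c(O)$, or another edge $\Gamma_j$ with $j>m$, which would cost $\overline{c}_j$, are all excluded), so the dynamic programming argument of Lemma~\ref{super_property 2} gives $\lambda\mathcal{V}_i(O)+H_i^+(O,d\varphi_i/dx_i(O))\ge0$. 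For the cluster function $\mathcal{V}_c$ at $O$, the same reduction applies: if $\mathcal{V}_c(O)<\min_{j>m}\{\mathcal{V}_j(O)+\overline{c}_j\}$ and $\mathcal{V}_c(O)<-H_O^T/\lambda$, the optimal trajectory cannot leave the cluster $\cup_{j\le m}\Gamma_j$ nor stay frozen at $O$, so $\mathcal{V}_c$ is locally the value function of the switching-cost-free junction problem of~\cite{AOT2015} restricted to $\cup_{j\le m}\Gamma_j$, and the junction super-solution argument of~\cite{AOT2015} yields $\lambda\mathcal{V}_c(O)+\max_{j\le m}H_j^+(O,d\varphi_c/dx_j^+(O))\ge0$.

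I expect the main obstacle to be precisely this last point: the super-solution inequality for $\mathcal{V}_c$ at the vertex of the free cluster. Unlike the single-edge estimates, it involves the full multi-branch junction Hamiltonian $\max_{j\le m}H_j^+$, so it cannot be obtained by the elementary ``push into one branch'' trick and requires importing the delicate trajectory and test-function construction of~\cite{AOT2015} (the analogues of Lemmas~\ref{lem: Super optimality 1}--\ref{lem: Sub optimality} adapted to the cluster), while simultaneously keeping track of the new possibility of exiting into a positive-cost edge. Once the strict-inequality reduction above has isolated the regime in which no exit from the cluster occurs, this amounts to checking that the arguments of~\cite{AOT2015} and of Section~\ref{sec:relation-between-two} are compatible, which is the sense in which the proof ``combines'' the two.
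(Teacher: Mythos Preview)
Your proposal is correct and follows exactly the approach the paper indicates: the paper in fact \emph{omits} the proof of this theorem, stating only that ``every result presented below will mainly be obtained by combining the arguments proposed above with those used in~\cite{AOT2015}''. Your outline is precisely this combination---the algebraic terms at $O$ come from the characterization of $\mathsf{V}(O)$, the single-branch $H_i^+$ terms from the argument of Lemma~\ref{sup_property 1} and Lemmas~\ref{super_property 1}--\ref{super_property 2}, and the multi-branch super-solution inequality for $\mathcal{V}_c$ from the junction argument of~\cite{AOT2015}---and you have correctly located the only nontrivial step.
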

Let us  state the comparison principle for the  Hamilton-Jacobi system.
\begin{thm}\label{sec:more-general-optimal-1}
Let $U=\left(U_{c},U_{m+1},\ldots,U_{N}\right)$ and $ W=\left(W_{c},W_{m+1},\ldots,W_{N}\right)$ be a bounded
viscosity sub-solution and a viscosity super-solution, respectively,  of the Hamilton-Jacobi system.
The following holds:
 $U\le W$ in $\mathcal{G}$, i.e., $U_{c}\le W_{c}$
on $\cup_{j=1}^{m}\Gamma_{j}$, and $U_{i}\le W_{i}$ in $\Gamma_{i}$ for all $i>m$.
\end{thm}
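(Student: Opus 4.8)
The plan is to reproduce the two-case dichotomy of the proof of Theorem~\ref{thm: Comparison Principle}, treating the cost-free bundle $\cup_{j\le m}\Gamma_j$ as the junction of~\cite{AOT2015} and each positive-cost edge $\Gamma_i$, $i>m$, as in Section~\ref{sec:comp-princ-uniq}. First I would argue by contradiction. Classical comparison for the boundary value problem on each open edge shows, as in the opening of the proof of Theorem~\ref{thm: Comparison Principle}, that a positive componentwise maximum of $U-W$ must be realized at $O$; so set
\[
M:=\max\Big\{\,U_c(O)-W_c(O),\ \max_{i>m}\big(U_i(O)-W_i(O)\big)\,\Big\}
\]
and suppose $M>0$. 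Whichever component $\bullet\in\{c\}\cup\{m+1,\dots,N\}$ realizes $M$, its sub-solution inequality at $O$ is a $\max\{\dots\}\le 0$ containing $H_O^T$, whence $\lambda U_\bullet(O)+H_O^T\le 0$; since $U_\bullet(O)>W_\bullet(O)$ this yields the crucial strict bound $W_\bullet(O)<-H_O^T/\lambda$. The remaining terms inside the same sub-max will supply the transfer inequalities $U_i(O)\le U_c(O)$ (from the $-\lambda U_c(O)$ term, for $i>m$) and $U_\bullet(O)\le U_{j_0}(O)+\overline{c}_{j_0}$ (from the inter-edge switching term) used below.

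Next I would isolate the two \emph{terminal} configurations. (i) If $M$ is realized by a positive-cost component and its switching terms are strictly non-binding, i.e.
\[
W_i(O)<\min\Big\{\min_{j>m,\,j\ne i}\{W_j(O)+\overline{c}_j\},\ W_c(O),\ -\tfrac{H_O^T}{\lambda}\Big\},
\]
then $W_i$ is a super-solution of~\eqref{eq:H-J equation with H+} on $\Gamma_i$, and the analogues for the present system of Lemmas~\ref{lem: Super optimality 2} and~\ref{lem: Sub optimality} (proved as in Section~\ref{sec:comp-princ-uniq}) produce $x_k\to O$ and times $\eta_k\to\eta>0$ with $U_i(x_k)-W_i(x_k)\le[U_i(O)-W_i(O)]e^{-\lambda\eta_k}$, forcing $M\le Me^{-\lambda\eta}$, i.e. $M\le 0$, a contradiction; alternatively the doubling-of-variables estimate of the second proof of Theorem~\ref{thm: Comparison Principle} gives the same conclusion. (ii) If $M$ is realized by $U_c-W_c$ and the bundle's switching term $-\lambda\min_{j>m}\{W_j(O)+\overline{c}_j\}$ is non-binding, then $W_c$ restricted to $\cup_{j\le m}\Gamma_j$ is a super-solution of the cost-free junction problem of~\cite{AOT2015} with $W_c(O)<-H_O^T/\lambda$, and the comparison principle of~\cite{AOT2015} on that sub-junction yields $U_c\le W_c$, again contradicting $M>0$.

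Every remaining configuration is then driven into (i) or (ii) by a transfer. If an active switching term at the realizing component points to a positive-cost edge $j_0$, the inequalities $W_{j_0}(O)+\overline{c}_{j_0}\le W_\bullet(O)<U_\bullet(O)\le U_{j_0}(O)+\overline{c}_{j_0}$ give $W_{j_0}(O)<U_{j_0}(O)$, so $j_0$ inherits a positive gap; positivity of $\overline{c}_{j_0}$ makes the inter-edge alternative at $j_0$ strictly non-binding. If instead the active term is $-\lambda W_c(O)$ for some $i>m$, then $W_i(O)\ge W_c(O)$ together with the super-solution constraint~\eqref{eq:3} forces $W_i(O)=W_c(O)$, while $U_i(O)\le U_c(O)$ yields $U_c(O)>W_c(O)$, transferring the gap to the $c$-component. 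Along such a chain a transfer into a positive-cost edge strictly lowers the super-value (by $\overline{c}_{j_0}>0$), while a transfer into the bundle leaves it unchanged and, the bundle being a single aggregated node, is never followed by a second bundle transfer; since the super-value is non-increasing and bounded below, the chain cannot cycle and terminates in boundedly many steps at a component where (i) or (ii) applies.

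The delicate point I anticipate is exactly this bookkeeping around the aggregated zero-cost node $c$: one must check that a cost-free entrance into the bundle propagates the positive gap consistently (using~\eqref{eq:3} and $U_i(O)\le U_c(O)$) and that every exit out of it costs a strictly positive amount, so that no infinite alternation between $c$ and the positive-cost edges is possible. Establishing the super-optimality and sub-optimality lemmas for the \emph{mixed} system—so that the contractive estimate $M\le Me^{-\lambda\eta}$ is actually available in the terminal case (i)—is the other technical ingredient, and it is here that the arguments of~\cite{AOT2015} (for the continuous part) and of Section~\ref{sec:comp-princ-uniq} (for the positive-cost edges) must be genuinely combined.
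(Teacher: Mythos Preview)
Your proposal is correct and follows essentially the same dichotomy as the paper: reduce to either the cost-free junction comparison of~\cite{AOT2015} (your terminal case (ii), the paper's Sub-case~1-a via the appendix Lemma~\ref{lem: Comparison Principle}) or the single-edge problem~\eqref{eq:H-J equation with H+} handled by the Lions--Souganidis doubling argument (your terminal case (i), the paper's Sub-cases~1-b, 2-a, 2-b).

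The organizational difference is that the paper avoids your chain bookkeeping entirely by treating the $c$-component \emph{first and unconditionally}: once Case~1 establishes $U_c(O)\le W_c(O)$, any positive-cost component $i>m$ with $U_i(O)>W_i(O)$ automatically satisfies $W_i(O)<U_i(O)\le U_c(O)\le W_c(O)$, so the $-\lambda W_c(O)$ alternative at $i$ is already strictly non-binding and your transfer into the bundle never occurs. What remains is at most a single transfer $i\to j_0$ (or $c\to j_0$) to the positive-cost edge realizing $\min_{j>m}\{W_j(O)+\overline c_j\}$, after which positivity of $\overline c_{j_0}$ forces \emph{all} switching alternatives at $j_0$ (including the $c$-term, since $W_{j_0}(O)+\overline c_{j_0}\le W_c(O)$) to be non-binding, and one is immediately in terminal case~(i). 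So your worry about cycling and ``boundedly many steps'' is unnecessary: the chain has length at most two, and ordering the cases as the paper does makes this transparent.
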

\begin{proof}[Proof of Theorem~\ref{sec:more-general-optimal-1}]
Suppose by contradiction that there exists $i\in\left\{ 1,\ldots,N\right\} $ and $x\in \Gamma_{i}$
such that
\[
\begin{cases}
U_{c}\left(x\right)-W_{c}\left(x\right)>0 & \text{if }i\le m,\\
U_{i}\left(x\right)-W_{i}\left(x\right)>0 & \text{if }i>m,
\end{cases}
\]
then
\[
\begin{cases}
U_{c}\left(O\right)-W_{c}\left(O\right)=\max_{\cup_{j=1}^{m}\Gamma_{j}}\left\{ U_{c}-W_{c}\right\} >0 & \text{if }i\le m,\\
U_{i}\left(O\right)-W_{i}\left(O\right)=\max_{\Gamma_{i}}\left\{ U_{i}-W_{i}\right\} >0 & \text{if }i>m,
\end{cases}
\]
since the case where the positive maximum is achieved outside the junction leads to a contradition by classical comparison results.

\begin{description}
  \item{\emph{Case 1:}} ${\displaystyle U_{c}\left(O\right)-W_{c}\left(O\right)=\max_{\cup_{i=1}^{m}\Gamma_{i}}\left(U_{c}-W_{c}\right)}>0$
    \begin{description}
      \item{\emph{Sub-case 1-a:}}  $W_{c}\left(O\right)<\min_{j>m}\left\{ W_{j}\left(O\right)+\overline{c}_{j}\right\} $.
Since $W_{c}\left(O\right)<U_{c}\left(O\right)\le-\dfrac{H_{O}^{T}}{\lambda}$,
the function $W_{c}$ is a viscosity super-solution of
\[
\begin{cases}
\lambda W_{c}\left(x\right)+H_{i}\left(x,\dfrac{d W_{c}}{d x_{i}}\left(x\right)\right) & = 0\quad\mbox{if }i\le m,x\in \Gamma_{i}\backslash\left\{ O\right\} ,\\
\lambda W_{c}\left(O\right)+H_{c}\left(\dfrac{d W_{c}}{d x_{1}^{+}}\left(O\right),\ldots,\dfrac{d W_{c}}{d x_{m}^{+}}\left(O\right)\right) & = 0\quad\mbox{if }x=O.
\end{cases}
\]
where $H_{c}\left(p_{1},\ldots,p_{m}\right)=\max_{i\le m}H_{i}^{+}\left(O,p_{i}\right)$.
Applying Lemma~\ref{lem: Comparison Principle} in the Appendix, we obtain that 
$U_{c}\left(O\right)\le W_{c}\left(O\right)$ in contradiction with the assumption.
\item{\emph{Sub-case 1-b:}} $W_{c}\left(O\right)\ge\min_{j>m}\left\{ W_{j}\left(O\right)+\overline{c}_{j}\right\} =W_{i_{0}}\left(O\right)+\overline{c}_{i_{0}}$.
Since $\overline{c}_{i_{0}}>0$, we first see that $W_{i_{0}}\left(O\right)<\min\left\{ \min_{j>m}\left\{ W_{j}\left(O\right)+\overline{c}_{j}\right\} ,W_{c}\left(O\right),-\dfrac{H_{O}^{T}}{\lambda}\right\} $.
Hence, $W_{i_{0}}$ is a viscosity super-solution of~\eqref{eq:H-J equation with H+} replacing $i$ by $i_{0}$. Moreover, since
\[
U_{i_{0}}\left(O\right)+\overline{c}_{i_{0}}\ge\min_{j>m}\left(U_{j}\left(O\right)+\overline{c}_{j}\right)\ge U_{c}\left(O\right)>
W_{c}\left(O\right)>W_{i_0}\left(O\right)+\overline{c}_{i_0},
\]
then $U_{i_{0}}\left(O\right)>W_{i_{0}}\left(O\right)$. Applying the same argument as \emph{Case
1} in the second proof of
Theorem~\ref{thm: Comparison Principle} replacing $i$ by $i_0$,  we obtain that $U_{i_{0}}\left(O\right)\le W_{i_{0}}\left(O\right)$,
which is contradictory.
    \end{description}
\item{\emph{Case 2:}} ${\displaystyle U_{i}\left(O\right)-W_{i}\left(O\right)=\max_{\Gamma_{i}}\left(U_{i}-W_{i}\right)}>0$
for some $i>m$. Using the definition of viscosity sub-solutions and \emph{Case 1},
 we see that $W_{i}\left(O\right)<U_{i}\left(O\right)\le U_{c}\left(O\right)\le W_{c}\left(O\right)$. 
\begin{description}
\item{\emph{Sub-case 2-a:}} $W_{i}\left(O\right)<\min_{j>m}\left\{ W_{j}\left(O\right)+\overline{c}_{j}\right\} $.
Since $U_{i}\left(O\right)<-\dfrac{H_{O}^{T}}{\lambda}$, we first
see that $W_{i}\left(O\right)<\min\left\{ \min_{j>m}\left\{ W_{j}\left(O\right)+\overline{c}_{j}\right\} ,W_{c}\left(O\right),-\dfrac{H_{O}^{T}}{\lambda}\right\} $.
Hence, $W_{i}$ is a viscosity super-solution of~\eqref{eq:H-J equation with H+}. Applying the same argument as in \emph{Case 1} in the second proof of
Theorem~\ref{thm: Comparison Principle}, we see that $U_{i}\left(O\right)\le W_{i}\left(O\right)$, which is  contradictory.
\item{\emph{Sub-case 2-b:}} $W_{i}\left(O\right)\ge\min_{j>m}\left\{ W_{j}\left(O\right)+\overline{c}_{j}\right\} =W_{i_{0}}\left(O\right)+\overline{c}_{i_{0}}$.
Since $\overline{c}_{i_{0}}>0$, we can check that $W_{i_{0}}\left(O\right)<\min\left\{ \min_{j>m}\left\{ W_{j}\left(O\right)+\overline{c}_{j}\right\} ,W_{c}\left(O\right),-\dfrac{H_{O}^{T}}{\lambda}\right\} $.
Hence, $W_{i_{0}}$ is a viscosity super-solution of~\eqref{eq:H-J equation with H+} replacing $i$ by $i_{0}$. Moreover, since
\[
U_{i_{0}}\left(O\right)+\overline{c}_{i_{0}}\ge\min_{j>m}\left(U_{j}\left(O\right)+\overline{c}_{j}\right)\ge U_{c}\left(O\right)
> W_{i}\left(O\right)>W_{i_{0}}\left(O\right)+\overline{c}_{i_{0}},
\]
then $U_{i_{0}}\left(O\right)>W_{i_{0}}\left(O\right)$. Applying the same argument as \emph{Case
1} in the second proof of
Theorem~\ref{thm: Comparison Principle} replacing $i$ by $i_0$, we obtain that $U_{i_{0}}\left(O\right)\le W_{i_{0}}\left(O\right)$
which is contradictory.
\end{description}
\end{description}
\end{proof}
\appendix
\section{Appendix}
\label{sec:gener-case-switch-1}
\begin{lem}
\label{lem: Apendix 1} For any $a\in A_{i}^{+}$, there exists a sequence
$\left\{ a_{n}\right\}$ such that $a_{n}\in A_{i}$
and
\begin{eqnarray*}
f_{i}\left(O,a_{n}\right) & \ge & \dfrac{\delta}{n}>0,\\
\left|f_{i}\left(O,a_{n}\right)-f_{i}\left(O,a\right)\right| & \le & \dfrac{2M}{n},\\
\left|\ell_{i}\left(O,a_{n}\right)-\ell_{i}\left(O,a\right)\right| & \le & \dfrac{2M}{n}.
\end{eqnarray*}
\end{lem}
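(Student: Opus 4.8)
The plan is to realize each $a_n$ as a convex combination, inside the set $\mathrm{FL}_i(O)$, of the pair associated with $a$ and the pair associated with a control of maximal positive speed $\delta$. First I would invoke the strong controllability assumption $\left[H4\right]$: since $\left[-\delta e_i,\delta e_i\right]\subset F_i(O)$, there exists $\bar{a}\in A_i$ with $f_i(O,\bar{a})=\delta>0$. The corresponding element $\left(f_i(O,\bar{a})e_i,\ell_i(O,\bar{a})\right)=\left(\delta e_i,\ell_i(O,\bar{a})\right)$ lies in $\mathrm{FL}_i(O)$, as does $\left(f_i(O,a)e_i,\ell_i(O,a)\right)$ by definition of $\mathrm{FL}_i$.

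Next, for each $n\in\mathbb{N}^{\star}$ I would consider the convex combination
\[
\left(1-\frac{1}{n}\right)\left(f_i(O,a)e_i,\ell_i(O,a)\right)+\frac{1}{n}\left(\delta e_i,\ell_i(O,\bar{a})\right).
\]
By the convexity part of $\left[H3\right]$, this point still belongs to $\mathrm{FL}_i(O)$, so there exists $a_n\in A_i$ with
\[
f_i(O,a_n)=\left(1-\frac{1}{n}\right)f_i(O,a)+\frac{\delta}{n},\qquad \ell_i(O,a_n)=\left(1-\frac{1}{n}\right)\ell_i(O,a)+\frac{1}{n}\ell_i(O,\bar{a}).
\]

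Finally I would verify the three estimates directly from these two identities. Since $a\in A_i^+$ gives $f_i(O,a)\ge0$, the first identity yields $f_i(O,a_n)\ge\delta/n>0$. For the second, the same identity gives $\left|f_i(O,a_n)-f_i(O,a)\right|=\frac{1}{n}\left|\delta-f_i(O,a)\right|\le\frac{2M}{n}$, using that both $\delta=f_i(O,\bar{a})$ and $f_i(O,a)$ are bounded in absolute value by $M$ thanks to $\left[H1\right]$. The third estimate is identical: $\left|\ell_i(O,a_n)-\ell_i(O,a)\right|=\frac{1}{n}\left|\ell_i(O,\bar{a})-\ell_i(O,a)\right|\le\frac{2M}{n}$, using the bound $M$ on $\ell_i$ from $\left[H2\right]$.

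There is no serious obstacle here; the only point requiring care is the appeal to convexity of $\mathrm{FL}_i(O)$ in $\left[H3\right]$, which is precisely what guarantees that the interpolated speed-cost pair is \emph{attained} by an admissible control $a_n\in A_i$, rather than merely lying in the convex hull of the attainable set. This is exactly the role played by $\left[H3\right]$ in avoiding relaxed controls, as noted in the remark following the assumptions.
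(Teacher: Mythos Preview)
Your proof is correct and follows essentially the same approach as the paper: pick a control $\bar a$ (the paper calls it $a_\delta$) with $f_i(O,\bar a)=\delta$ via $[H4]$, take the $\frac{1}{n}$--$(1-\frac{1}{n})$ convex combination inside $\mathrm{FL}_i(O)$ using $[H3]$ to obtain $a_n$, and read off the three estimates from the bounds $M$ in $[H1]$ and $[H2]$.
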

\begin{proof}[Proof of Lemma~\ref{lem: Apendix 1}]
From assumption $\left[H4\right]$, there exists $a_{\delta}\in A_{i}$
such that $f_{i}\left(O,a_{\delta}\right)=\delta$. Since $\mbox{FL}_{i}\left(O\right)$
is convex (by assumption $\left[H3\right]$), for any $n\in\mathbb{N},a\in A^{+}_{i}$
\[
\dfrac{1}{n}\left(f_{i}\left(O,a_{\delta}\right)e_{i},\ell_{i}\left(O,a_{\delta}\right)\right)+\left(1-\dfrac{1}{n}\right)\left(f_{i}\left(O,a\right),\ell_{i}\left(O,a\right)e_{i}\right)\in\mbox{FL}_{i}\left(O\right).
\]
Then, there exists a sequence $\left\{ a_{n}\right\}$
such that $a_{n}\in A_{i}$ and
\begin{equation}
\dfrac{1}{n}\left(f_{i}\left(O,a_{\delta}\right),\ell_{i}\left(O,a_{\delta}\right)\right)+\left(1-\dfrac{1}{n}\right)\left(f_{i}\left(O,a\right),\ell_{i}\left(O,a\right)\right)=\left(f_{i}\left(O,a_{n}\right),\ell_{i}\left(O,a_{n}\right)\right)\in\mbox{FL}_{i}\left(O\right).\label{eq: convex property}
\end{equation}
Notice that $f_{i}\left(O,a\right)\ge0$ since $a\in A_{i}^{+}$, this yields
\[
f_{i}\left(O,a_{n}\right)\ge\dfrac{f_{i}\left(O,a_{\delta}\right)}{n}=\dfrac{\delta}{n}>0.
\]
From~\eqref{eq: convex property}, we also have
\[
\left|f_{i}\left(O,a_{n}\right)-f_{i}\left(O,a\right)\right|=\dfrac{1}{n}\left|f_{i}\left(O,a_{\delta}\right)-f_{i}\left(O,a\right)\right|\le\dfrac{2M}{n},
\]
and
\[
\left|\ell_{i}\left(O,a_{n}\right)-\ell_{i}\left(O,a\right)\right|=\dfrac{1}{n}\left|\ell_{i}\left(O,a_{\delta}\right)-\ell_{i}\left(O,a\right)\right|\le\dfrac{2M}{n}.
\]
\end{proof}
We can state the following corollary of Lemma~\ref{lem: Apendix 1}:
\begin{cor}
\label{corollary: max=00003Dsup}For  $i=\overline{1,N} $
and $p_{i}\in\mathbb{R}$,
\[
\max_{a\in A_{i}\mbox{ s.t. }f_{i}\left(O,a\right)\ge0}\left\{ -f_{i}\left(O,a\right)p_{i}-\ell_{i}\left(O,a\right)\right\} =\sup_{a\in A_{i}\mbox{ s.t. }f_{i}\left(O,a\right)>0}\left\{ -f_{i}\left(O,a\right)p_{i}-\ell_{i}\left(O,a\right)\right\} .
\]
\end{cor}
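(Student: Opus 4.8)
The plan is to recognise that all the substance already resides in Lemma~\ref{lem: Apendix 1}, so that the corollary reduces to a short approximation argument combined with a compactness argument certifying that the left-hand supremum is genuinely attained. Writing $g(a):=-f_{i}(O,a)p_{i}-\ell_{i}(O,a)$ for brevity, I note first that
\[
\{a\in A_{i}:f_{i}(O,a)>0\}\subseteq\{a\in A_{i}:f_{i}(O,a)\ge0\},
\]
so that the inequality $\sup_{f_{i}(O,a)>0}g(a)\le\sup_{f_{i}(O,a)\ge0}g(a)$ holds trivially; only the reverse inequality, together with the attainment of the maximum on the left, requires an argument.

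For the attainment, I would use that $A_{i}$ is compact by $[H0]$ and that $a\mapsto g(a)$ is continuous by $[H1]$ and $[H2]$; since $\{a\in A_{i}:f_{i}(O,a)\ge0\}$ is the preimage of a closed half-line under the continuous map $a\mapsto f_{i}(O,a)$, it is closed, hence compact, so $g$ attains its maximum there at some $a^{\ast}\in A_{i}^{+}$. This legitimises writing $\max$ rather than $\sup$ on the left-hand side. For the reverse inequality I would then apply Lemma~\ref{lem: Apendix 1} to this maximizer $a^{\ast}$, obtaining a sequence $\{a_{n}\}\subset A_{i}$ with $f_{i}(O,a_{n})\ge\delta/n>0$ and with $f_{i}(O,a_{n})\to f_{i}(O,a^{\ast})$, $\ell_{i}(O,a_{n})\to\ell_{i}(O,a^{\ast})$. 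Each $a_{n}$ lies in the index set of the right-hand supremum, so $g(a_{n})\le\sup_{f_{i}(O,a)>0}g(a)$ for every $n$; passing to the limit and using $g(a_{n})\to g(a^{\ast})$ gives
\[
\max_{f_{i}(O,a)\ge0}g(a)=g(a^{\ast})\le\sup_{f_{i}(O,a)>0}g(a),
\]
which closes the argument when combined with the trivial inequality.

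I expect no real obstacle here: the delicate step---approximating an admissible control with $f_{i}(O,a)\ge0$ by controls with strictly positive velocity while keeping the running cost arbitrarily close, which rests on the convexity assumption $[H3]$ and the strong controllability $[H4]$---has already been dispatched in Lemma~\ref{lem: Apendix 1}. The only point to handle carefully is the logical order: one must secure a genuine maximizer on the closed set \emph{before} approximating it from within the open set, since an approximation argument by itself would merely compare two suprema without establishing that the left-hand one is achieved.
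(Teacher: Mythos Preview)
Your proposal is correct and is precisely the argument the paper has in mind: the corollary is stated in the paper without proof, merely as an immediate consequence of Lemma~\ref{lem: Apendix 1}, and your write-up supplies exactly the missing two-line approximation (plus the compactness remark justifying the word ``max''). There is nothing to add or correct.
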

\begin{lem}
\label{lem: Comparison Principle}If $U_{c}$ and $W_{c}$ are respectively
viscosity sub and super-solution of
\begin{align*}
\lambda U_{c}\left(x\right)+H_{i}\left(x,\dfrac{d U_{c}}{d x_{i}}\left(x\right)\right) & \le0\text{ if }x\in \Gamma_{i}\backslash\left\{ O\right\} ,\\
\lambda U_{c}\left(O\right)+H_{c}\left(\dfrac{d U_{c}}{d x_{1}}\left(O\right),\ldots,\dfrac{d U_{c}}{d x_{m}}\left(O\right)\right) & \le0\text{ if }x=O,
\end{align*}
and 
\begin{align*}
\lambda W_{c}\left(x\right)+H_{i}\left(x,\dfrac{d W_{c}}{d x_{i}}\left(x\right)\right) & \ge0\text{ if }x\in \Gamma_{i}\backslash\left\{ O\right\} ,\\
\lambda W_{c}\left(O\right)+H_{c}\left(\dfrac{d W_{c}}{d x_{1}}\left(O\right),\ldots,\dfrac{d W_{c}}{d x_{m}}\left(O\right)\right) & \ge0\text{ if }x=O,
\end{align*}
then $U_{c}\left(x\right)\le W_{c}\left(x\right)$ for all $x\in\bigcup_{i=1}^{m}\Gamma_{i}$.
\end{lem}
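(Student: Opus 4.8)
The plan is to recognise that the system in the statement is exactly the Hamilton--Jacobi problem on the reduced junction $\bigcup_{i=1}^m\Gamma_i$ coming from an optimal control problem \emph{without} switching costs, whose vertex Hamiltonian $H_c(p_1,\dots,p_m)=\max_{i\le m}H_i^+(O,p_i)$ is the junction Hamiltonian of~\cite{AOT2015} (see the Remark following Definition~\ref{def: viscosity of general HJ}). Thus the statement is the comparison principle of~\cite{AOT2015}, and one admissible route is to invoke it directly. I will instead indicate a self-contained proof obtained by adapting the PDE argument of Theorem~\ref{thm: Comparison Principle} (the one inspired by~\cite{LS2016}); only \emph{Case~1} of that proof needs to be revisited to handle several edges meeting at $O$.

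First I would argue by contradiction. If $\max_{\bigcup_{i\le m}\Gamma_i}(U_c-W_c)>0$, then, since on each open edge $\Gamma_i\setminus\{O\}$ the equation is a classical coercive Hamilton--Jacobi equation and $U_c,W_c$ are bounded, the positive maximum must be attained at the vertex, so $U_c(O)-W_c(O)=\max(U_c-W_c)>0$. Testing the sub-solution inequality for $U_c$ at $O$ with slopes tending to $+\infty$ on every edge and using $\lim_{p\to+\infty}H_k^+(O,p)=\max_{a\in A_k^O}\{-\ell_k(O,a)\}$, I obtain $\lambda U_c(O)+h\le0$, where $h:=\max_{k\le m}\max_{a\in A_k^O}\{-\ell_k(O,a)\}$ is the tangential value of the reduced junction; hence $\lambda W_c(O)+h<0$. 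As in Lemma~\ref{lem:test function at O}, $U_c$ is Lipschitz near $O$ on every edge with a common constant $L$.

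Next I would run the doubling of~\cite{LS2016} on the junction: for $\gamma\in(0,\tfrac{1}{2})$ and $\delta(\varepsilon)=(L+1)\varepsilon$, maximise
\[
\Psi_\varepsilon(x,y)=U_c(x)-W_c(y)-\frac{1}{2\varepsilon}\bigl(-|x|+|y|+\delta(\varepsilon)\bigr)^2-\gamma\bigl(|x|+|y|\bigr)
\]
over $\bigl(\bigcup_{i\le m}\Gamma_i\bigr)^2$, with $|x|=d(x,O)$. Exactly as in the single-edge computation, the maximiser $(x_\varepsilon,y_\varepsilon)$ tends to $(O,O)$ with $(|x_\varepsilon|-|y_\varepsilon|)^2/\varepsilon\to0$, and the Lipschitz-shift argument shows $x_\varepsilon\neq O$, so $x_\varepsilon$ lies in the interior of a single edge $\Gamma_{i_0}$. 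Because the penalisation depends on $x,y$ only through $|x|$ and $|y|$, the slope it produces at $O$ is the \emph{same along every edge}; call it $q^0$, and denote by $p_\varepsilon$ the corresponding slope in the $x$ variable at $x_\varepsilon$ (differing from $q^0$ by $\pm2\gamma$). If $y_\varepsilon$ also lies in the interior of $\Gamma_{i_0}$, both points sit on one edge and the estimate of Theorem~\ref{thm: Comparison Principle} applies verbatim.

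The genuinely new case, and the step I expect to be the main obstacle, is $y_\varepsilon=O$. There the super-solution inequality only gives $\lambda W_c(O)+\max_{k\le m}H_k^+(O,q^0)\ge0$, a maximum over all edges, whereas the sub-solution inequality for $U_c$ at $x_\varepsilon$ involves the single Hamiltonian $H_{i_0}$. To bridge this I would exploit $\lambda W_c(O)+h<0$: for each $k\ne i_0$ one has $H_k^+(O,p)\to\max_{a\in A_k^O}\{-\ell_k(O,a)\}\le h$ as $p\to+\infty$, so by testing $W_c$ at $O$ with slope $q^0$ on $\Gamma_{i_0}$ and sufficiently large slopes on the remaining edges, the contributions of the indices $k\ne i_0$ stay at most $h+o(1)<-\lambda W_c(O)$ and cannot realise the maximum. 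Hence the maximum is forced onto edge $i_0$, giving $\lambda W_c(O)+H_{i_0}^+(O,q^0)\ge0$; as $H_{i_0}(O,\cdot)\ge H_{i_0}^+(O,\cdot)$, the same-edge super-solution inequality with $H_{i_0}$ holds at $O$. Subtracting it from $\lambda U_c(x_\varepsilon)+H_{i_0}(x_\varepsilon,p_\varepsilon)\le0$ and using the local bound $|H_{i_0}(x,p)-H_{i_0}(y,q)|\le\overline M_{i_0}\bigl(|x-y|(1+|p|)+|p-q|\bigr)$, then letting $\varepsilon\to0$ and $\gamma\to0$, yields $U_c(O)-W_c(O)\le0$, the desired contradiction. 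The delicate point in a full write-up is precisely the admissibility of enlarging the slopes on the edges $k\ne i_0$ while keeping the test function below $W_c$, which is where the lower control on $W_c$ near $O$ (coming from $\left[H4\right]$ and the super-solution property) must be invoked.
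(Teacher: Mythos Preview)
Your first route---citing the comparison principle of~\cite{AOT2015} directly---is perfectly legitimate here and is in fact the spirit in which the paper invokes this lemma. Your self-contained PDE argument, however, diverges from the paper's proof at the doubling step and carries two genuine gaps.

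First, you maximise $\Psi_\varepsilon$ over the full product $\bigl(\bigcup_{i\le m}\Gamma_i\bigr)^2$. Since the penalty depends on $x,y$ only through $|x|,|y|$, nothing prevents the maximiser from having $x_\varepsilon\in\Gamma_{i_0}\setminus\{O\}$ and $y_\varepsilon\in\Gamma_{k}\setminus\{O\}$ with $k\neq i_0$: for fixed radii, $x_\varepsilon$ sits on the edge where $U_c$ is largest and $y_\varepsilon$ on the edge where $W_c$ is smallest, and these need not coincide. You only treat the cases $y_\varepsilon\in\Gamma_{i_0}$ and $y_\varepsilon=O$. When $k\neq i_0$ the two viscosity inequalities involve different Hamiltonians $H_{i_0}$ and $H_k$, and the subtraction does not yield the estimate you need.

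Second, in the case $y_\varepsilon=O$, your plan to force the vertex maximum onto edge $i_0$ by ``enlarging the slopes on the edges $k\neq i_0$'' does not in general produce an admissible test function: for $W_c-\psi$ to retain a local minimum at $O$ after you raise $\tfrac{d\psi}{dx_k}(O)$ to some large $P$, you would need $W_c(y)-W_c(O)\ge P|y|+o(|y|)$ along $\Gamma_k$, and a merely lower semicontinuous super-solution carries no such a priori growth. The sentence you flag as ``delicate'' is in fact the missing idea, not a technicality.

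The paper sidesteps both issues by performing the doubling \emph{separately} on each $\Gamma_j\times\Gamma_j$, obtaining $m$ maximisers $(x_{j,\varepsilon,\gamma},y_{j,\varepsilon,\gamma})$. If on some edge $y_{j,\varepsilon,\gamma}\neq O$ along a subsequence, the single-edge subtraction already gives the contradiction; hence for $\varepsilon$ small one has $y_{j,\varepsilon,\gamma}=O$ for \emph{every} $j$. One then glues the $m$ one-edge penalties into a single continuous test function $\Psi$ on $\bigcup_{j\le m}\Gamma_j$ (with $\Psi|_{\Gamma_j}\in C^1$) for which $W_c+\Psi$ has a genuine local minimum at $O$. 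The vertex super-solution inequality then delivers $\lambda W_c(O)+H_{j_0}^+(O,q_{j_0})\ge 0$ for whichever $j_0$ realises the maximum in $H_c$, and since $x_{j_0,\varepsilon,\gamma}\neq O$ one subtracts from the interior sub-solution inequality on that \emph{same} edge $j_0$. No slope inflation is needed: the correct slopes on every edge come directly from the separate doublings.
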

\begin{proof}[Proof of Lemma~\ref{lem: Comparison Principle}]
Assume that there exists $\widehat{x}\in \Gamma_{i}$ where $1\le i\le m$
and $U_{c}\left(\widehat{x}\right)-W_{c}\left(\widehat{x}\right)>0$. By classical
comparison principle for the boundary problem on $\Gamma_{i}$, one gets 
\[
U_{c}\left(O\right)-W_{c}\left(O\right)=\max_{\Gamma_{i}}\left\{ U_{c}\left(x\right)-W_{c}\left(x\right)\right\} >0.
\]
Applying again classical comparison principle for the boundary problem for each edge $\Gamma_{j}$
\[
U_{c}\left(O\right)-W_{c}\left(O\right)=\max_{\bigcup_{i=1}^{m}\Gamma_{i}}\left\{ U_{c}\left(x\right)-W_{c}\left(x\right)\right\} >0.
\]
For $j=\overline{1,N}$, we consider the function
\begin{align*}
\Psi_{j,\varepsilon,\gamma}:\Gamma_{j}\times \Gamma_{j} & \longrightarrow\mathbb{R}\\
\left(x,y\right) & \longrightarrow U_{c}\left(x\right)-W_{c}\left(y\right)-\dfrac{1}{2\varepsilon}\left[-\left|x\right|+\left|y\right|+\delta\left(\varepsilon\right)\right]^{2}-\gamma\left(\left|x\right|+\left|y\right|\right),
\end{align*}
where $\delta\left(\varepsilon\right)=\left(L+1\right)\varepsilon$,
$\gamma\in\left(0,\dfrac{1}{2}\right)$.\\
The function $\Psi_{j,\varepsilon}$
attains its maximum at $\left(x_{j,\varepsilon,\gamma},y_{j,\varepsilon,\gamma}\right)\in \Gamma_{j}\times \Gamma_{j}$.
Applying the same argument as in the second proof of
Theorem~\ref{thm: Comparison Principle}, we have $x_{j,\varepsilon,\gamma},y_{j,\varepsilon,\gamma}\rightarrow O$
and $\dfrac{\left(x_{j,\varepsilon,\gamma}-y_{j,\varepsilon,\gamma}\right)^{2}}{\varepsilon}\rightarrow0$
as $\varepsilon\rightarrow0$. Moreover, for
any $j=\overline{1,m}$, $x_{j,\varepsilon,\gamma}\ne O$. 
We claim that $y_{j,\varepsilon,\gamma}$ must be $O$ for $\varepsilon$ small enough . Indeed, if there exists a sequence $\varepsilon_{n}$ such that $y_{j,\varepsilon_{n},\gamma}\in \Gamma_{j}\backslash \left\{O \right\} $, then applying viscosity inequalities, we have
\begin{align*}
U_{c}\left(x_{j,\varepsilon_{n},\gamma}\right)+H_{j}\left(x_{j,\varepsilon_{n},\gamma},\dfrac{-x_{j,\varepsilon_{n},\gamma}+y_{j,\varepsilon_{n},\gamma}+\delta\left(\varepsilon_{n}\right)}{\varepsilon_{n}}+\gamma\right) & \le0,\\
W_{c}\left(y_{j,\varepsilon_{n},\gamma}\right)+H_{j}\left(y_{j,\varepsilon_{n},\gamma},\dfrac{-x_{j,\varepsilon_{n},\gamma}+y_{j,\varepsilon_{n},\gamma}+\delta\left(\varepsilon_{n}\right)}{\varepsilon_{n}}-\gamma\right) & \ge0.
\end{align*}
Subtracting the two inequalities and using~\eqref{ineq: estimate Hamiltonian} with $H_{j}$, we obtain
\[
U_{c}\left(x_{j,\varepsilon_{n},\gamma}\right)-W_{c}\left(y_{j,\varepsilon_{n},\gamma}\right)\le\overline{M}_{j}\left|x_{j,\varepsilon_{n},\gamma}-y_{j,\varepsilon_{n},\gamma}\right|\left(1+\left|\dfrac{-x_{j,\varepsilon_{n},\gamma}+y_{j,\varepsilon_{n},\gamma}+\delta\left(\varepsilon_{n}\right)}{\varepsilon_{n}}-\gamma\right|\right)+\overline{M}_{j}2\gamma.
\]
Recall that we already have $\dfrac{\left(x_{j,\varepsilon_{n},\gamma}-y_{j,\varepsilon_{n},\gamma}\right)^{2}}{\varepsilon_{n}}\rightarrow0$
as $n\rightarrow{\infty}$. Let $n$ tend to $\infty$ and $\gamma$ tend to $0$ then we
obtain $U_{c}\left(O\right)-W_{c}\left(O\right)\le0$. It leads us
to a contradiction. So this claim is proved.

Define the function  $\Psi:\bigcup_{j=1}^{m}\Gamma_{j}\rightarrow\mathbb{R}$ by
\[
\Psi|_{\Gamma_{i}}\left(y\right)=\dfrac{1}{2\varepsilon}\sum_{j\ne i}\left\{ \left[-\left|x_{i,\varepsilon,\gamma}\right|+\delta\left(\varepsilon\right)\right]^{2}-\gamma\left|x_{i,\varepsilon,\gamma}\right|\right\} +\dfrac{1}{2\varepsilon}\left[-\left|x_{i,\varepsilon,\gamma}\right|+\left|y\right|+\delta\left(\varepsilon\right)\right]^{2}+\gamma\left(-\left|x_{i,\varepsilon,\gamma}\right|+\left|y\right|\right).
\]
We can see that $\Psi$ is continuous on $\bigcup_{j=1}^{m}\Gamma_{j}$ and belongs to $C^{1}\left(\Gamma_{j}\right)$ for $j=\overline{1,m}$. Moreover, for $j=\overline{1,m}$ and for $\varepsilon$ small enough, $y_{j,\varepsilon,\gamma}$=O  then the function $\Psi$ + $W_{c}$ has a minimum point at $O$. It yields
\[
\lambda W_{c}\left(O\right)+H_{c}\left(\dfrac{-\overline{x}_{1,\varepsilon,\gamma}+\delta\left(\varepsilon\right)}{\varepsilon},\ldots,\dfrac{-\overline{x}_{m,\varepsilon,\gamma}+\delta\left(\varepsilon\right)}{\varepsilon}\right)\ge0.
\]
By definition of $H_{c}$, there exists $j_{0}\in\left\{ 1,\ldots,m\right\} $
such that
\[
\lambda W_{c}\left(O\right)+H_{j_{0}}^{+}\left(O,\dfrac{-\overline{x}_{j_{0},\varepsilon,\gamma}+\delta\left(\varepsilon\right)}{\varepsilon}\right)\ge0.
\]
This implies
\[
\lambda W_{c}\left(O\right)+H_{j_{0}}\left(O,\dfrac{-\overline{x}_{j_{0},\varepsilon,\gamma}+\delta\left(\varepsilon\right)}{\varepsilon}\right)\ge0
\]
On the other hand, since $x_{j_{0},\varepsilon,\gamma}\in \Gamma_{j_0}\backslash\left\{ O\right\} $,
we have 
\[
\lambda U_{c}\left(\overline{x}_{j_0,\varepsilon,\gamma}\right)+H_{j_{0}}\left(x_{j_0,\varepsilon,\gamma},\dfrac{-\overline{x}_{j_0,\varepsilon,\gamma}+\delta\left(\varepsilon\right)}{\varepsilon}\right)\le0.
\]
Subtracting the two inequalities and using properties of Hamiltonian $H_{j_{0}}$,
let $\varepsilon$ tend to $0$ then $\gamma$ tend to $0$, we obtain that
$U_{c}\left(O\right)-W_{c}\left(O\right)\le0$, which is contradictory.
\end{proof}
\section*{Acknowledgement}
I would like to express my thanks to my advisors Y. Achdou, O. Ley and N. Tchou for suggesting me this work and for their help. I also thank the hospitality of Centre Henri Lebesgue and INSA de Rennes during
the preparation of this work. Moreover, this work was partially supported by the ANR (Agence Nationale de la Recherche)
through HJnet project ANR-12-BS01-0008-01 and MFG project ANR-16-CE40-0015-01.

\end{document}